\theoremstyle{plain}
 \newtheorem{thm}{Theorem}[section]
 \newtheorem{prop}[thm]{Proposition}
 \newtheorem{lem}[thm]{Lemma}
 \newtheorem{cor}[thm]{Corollary}
 \newtheorem{lem'}[thm]{``Lemma''}
\theoremstyle{definition}
\theoremstyle{remark}
 \newtheorem{rmk}{Remark}[section]
 \numberwithin{equation}{section}
\newcommand{\N}{{\mathbb N}}
\newcommand{\Q}{{\mathbb Q}}
\newcommand{\Z}{{\mathbb Z}}
\newcommand{\F}{{\mathbb F}}
\newcommand{\Gm}{\mathbb{G}_{\mr{m}}}
\newcommand{\Gml}{\mathbb{G}_{\mr{m,log}}}
\newcommand{\mr}{\mathrm}
\newcommand{\mc}{\mathcal}
\newcommand{\Spec}{\mathop{\mr{Spec}}}
\title[Comparison of Kummer topologies with classical topologies II]{Comparison of Kummer logarithmic topologies with classical topologies II}
\subjclass[2020]{14F20 (primary), 14A21 (secondary)}
\keywords{log schemes, Kummer log flat topology, comparison of cohomology}
\author[Heer Zhao]{\bfseries Heer Zhao}
\address{Fakult\"at f\"ur Mathematik, 
    Universit\"at Duisburg-Essen, 
    Essen 45117, 
    Germany}
\email{heer.zhao@uni-due.de}
\begin{document}

%{\begin{flushleft}\baselineskip9pt\scriptsize
%PUBLICATIONS DE L'INSTITUT MATH\'EMATIQUE\newline
%Nouvelle s\'erie, tome 91(105) (2012), od--do \hfill DOI:
%\end{flushleft}}
\vspace{18mm} \setcounter{page}{1} \thispagestyle{empty}

\begin{abstract}
We show that the higher direct images of smooth commutative group schemes from the Kummer log flat site to the classical flat site are torsion. For (1) smooth affine commutative schemes with geometrically connected fibers, (2) finite flat group schemes, (3) extensions of abelian schemes by tori, we give explicit description of the second higher direct image. If the rank of the log structure at any geometric point of the base is at most one, we show that the second higher direct image is zero for group schemes in case (1), case (3), and certain subcase of case (2). If the underlying scheme of the base is over $\Q$ or of characteristic $p>0$, we can also give more explicit description of the second higher direct image of group schemes in case (1), case (3), and certain subcase of case (3). Over standard Henselian log traits with finite residue field, we compute the first and the second Kummer log flat cohomology group with coefficients in group schemes in case (1), case (3), and certain subcase of case (3).
\end{abstract}

\maketitle

\section*{Introduction}
This article is a continuation of \cite{zha5}. For notation, conventions, and some history about Kummer logarithmic cohomology, we refer to that article. 

Let $X=(X,M_X)$ be an fs (fine and saturated) log scheme whose underlying scheme is locally noetherian, and let $G$ be a commutative group scheme over the underlying scheme of $X$. We endow $G$ with the induced log structure from $X$. Consider the ``forgetful'' map
$$\varepsilon_{\mr{fl}}:(\mr{fs}/X)_{\mr{kfl}}\rightarrow (\mr{fs}/X)_{\mr{fl}}$$
between these two sites. The first result of this article is the following theorem.

\begin{thm}[See also Theorem \ref{1.1}]
Assume that $G$ is smooth over $X$. Then the higher direct image $R^i\varepsilon_{\mr{fl}*}G$ is torsion for $i>0$.
\end{thm}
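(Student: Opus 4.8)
The plan is to reduce the assertion to a statement about stalks, and then to express the Kummer log flat cohomology, locally, through the cohomology of finite flat group schemes of bounded exponent. Since $R^i\varepsilon_{\mr{fl}*}G$ is the fppf-sheafification of the presheaf $U\mapsto H^i_{\mr{kfl}}(U,G)$, the torsion property may be checked on stalks; hence it suffices to show that $H^i_{\mr{kfl}}(U,G)$ is torsion for $i>0$ whenever $U=\Spec\mathcal O$ with $\mathcal O$ a strictly henselian local ring, equipped with an fs log structure admitting a chart $P\to M_U$ by an fs monoid $P$ with $P^{\mr{gp}}$ free of finite rank $r$ (such a chart exists after strict localization, since $\overline{M}_{U}$ is sharp and saturated).

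For each integer $n\geq 1$ let $U_n\to U$ be the standard Kummer cover attached to the inclusion $P\hookrightarrow\tfrac1n P$, and let $\tau_n$ denote the topology on $(\mr{fs}/U)$ generated by the classical fppf covers together with $U_n\to U$. Two structural inputs drive the proof. First, by Kato's description of the Kummer log flat topology, every kfl cover of $U$ is refined by one of the form $U'\to U_n\to U$ with $U'\to U_n$ a classical fppf cover; hence $\mr{kfl}=\bigcup_n\tau_n$ and $H^i_{\mr{kfl}}(U,G)\cong\varinjlim_n H^i_{\tau_n}(U,G)$. Second, computing the saturation that occurs in the fs fibre product yields a canonical isomorphism $U_n\times^{\mr{fs}}_U U_n\cong U_n\times_U H_n$, compatible with the two projections and with the log structures, where $H_n:=\Spec\mathcal O[\tfrac1n P^{\mr{gp}}/P^{\mr{gp}}]$ is the diagonalizable finite flat $U$-group scheme with character group $\tfrac1n P^{\mr{gp}}/P^{\mr{gp}}\cong(\Z/n)^{r}$, so that $H_n$ is annihilated by $n$ and has rank $n^{r}$; more generally the $(p+1)$-fold fs fibre power of $U_n$ over $U$ is $U_n\times_U H_n^{\,p}$, and $U_n\to U$ is a $\tau_n$-torsor under $H_n$.

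Fix $n$. Each $U_n\times_U H_n^{\,p}$ is finite over the strictly henselian local scheme $U$, hence a finite product of strictly henselian local schemes; since $G$ is smooth, $H^j_{\mr{fppf}}(U_n\times_U H_n^{\,p},G)=H^j_{\mr{\acute{e}t}}(U_n\times_U H_n^{\,p},G)=0$ for all $j\geq 1$. Over $U_n$ the topology $\tau_n$ restricts to the fppf topology, because the generating cover becomes, over $U_n$, the finite flat surjection $U_n\times_U H_n\to U_n$. Therefore the descent spectral sequence of the $\tau_n$-cover $U_n\to U$ degenerates and identifies $H^i_{\tau_n}(U,G)$ with the $i$-th cohomology of the complex $G(U_n)\to G(U_n\times_U H_n)\to G(U_n\times_U H_n^{\,2})\to\cdots$, that is, with the $i$-th cohomology of the finite flat group scheme $H_n$ with coefficients in $G(U_n)$. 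As $H_n$ is finite locally free, there is a transfer whose composite with restriction along the unit section $\{e\}\hookrightarrow H_n$ is multiplication by $\mathrm{rank}(H_n)=n^{r}$, while that restriction annihilates cohomology in degrees $\geq 1$; hence $H^i_{\tau_n}(U,G)$ is annihilated by $n^{r}$ for every $i>0$. Passing to the colimit over $n$ (ordered by divisibility), $H^i_{\mr{kfl}}(U,G)$ is a union of bounded-torsion groups for $i>0$, hence torsion; running over all stalks proves the theorem.

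I expect the main obstacle to be the second structural input above: establishing the cofinality of the covers $U_n$ and, especially, the fs fibre-product identification $U_n\times^{\mr{fs}}_U U_n\cong U_n\times_U H_n$ with the correct group scheme $H_n$ of exponent $n$. This is exactly the point where the saturation in the category of fs log schemes does the essential work, and where it matters that the underlying morphism of schemes $U_n\to U$, while log flat, need not be classically flat, so that ordinary fppf descent is not directly available. Granting that input, the degeneration of the spectral sequence --- which is where smoothness of $G$ is used, through the vanishing of higher fppf cohomology over the semilocal strictly henselian Kummer covers --- and the transfer estimate are routine; the later, finer description of $R^2\varepsilon_{\mr{fl}*}G$ will use smoothness more substantially.
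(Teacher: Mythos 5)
Your reduction to stalks and the use of the standard Kummer covers $U_n$ with $U_n\times^{\mr{fs}}_U U_n\cong U_n\times_U H_n$ match the paper, but two steps in between are genuine gaps. First, the identification $H^i_{\mr{kfl}}(U,G)\cong\varinjlim_n H^i_{\tau_n}(U,G)$ does not follow from the refinement statement for covers of $U$. The Kummer log flat topology on $(\mr{fs}/U)$ is strictly finer than the union of your topologies $\tau_n$: an object $T\in(\mr{fs}/U)$ carries its own log structure, typically not pulled back from $U$, and its Kummer covers are not refined by $T\times^{\mr{fs}}_U U_n$ for any $n$. Consequently the higher cohomology presheaves $\underline{H}^q_{\mr{kfl}}(G)$ do not vanish on the schemes $U_n\times_U H_n^{\,p}$ (they are kfl, not fppf, cohomology groups), and your descent spectral sequence does not degenerate. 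This is exactly where the paper has to work: it only has the \v{C}ech-to-derived spectral sequence $\check{H}^i_{\mr{kfl}}(\mathscr{X}_{\N},\underline{H}^j_{\mr{kfl}}(G))\Rightarrow H^{i+j}_{\mr{kfl}}(X,G)$, and it controls the terms with $j>0$ by an induction on the cohomological degree, combined with Leray spectral sequences for $\varepsilon_{\mr{fl}}$ over the fibre powers $X_n\times_X\cdots\times_X X_n$ and Nizio{\l}'s refinement result to handle the edge term; none of this is dispensable, and an unconditional identification of kfl cohomology with the \v{C}ech cohomology of the system $\{X_n\}$ is only established (with effort, in the prequel) in degree $2$.

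Second, the transfer step is precisely the point the paper identifies as unavailable. Restriction--corestriction with composite equal to multiplication by the rank exists for the \'etale (prime-to-$p$) part $H_m$ of $H_n$, because over the separably closed residue field one can average over $H_m(k)$ (Lemma \ref{1.5}); but for the infinitesimal part $H_{p^r}$ in residue characteristic $p$ there is no corestriction along the unit section $\{e\}\hookrightarrow H_{p^r}$ --- the paper states this explicitly and instead proves the weaker bound that $H^i_{x_{\mr{fl}}}(H_{p^r},G)$ is killed by $p^{iap^r}$ (a bound depending on the degree $i$, not the rank) by a cochain-level factorization through the complex of the trivial group, which relies on smoothness of $G$ via Katz's lemma that the kernel of $G(A)\to G(k)$ is killed by $p^t$ when $\mathfrak{m}_A^t=0$ (Lemmas \ref{1.6}, \ref{1.7}). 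Note also that in your argument smoothness of $G$ enters only through the vanishing of fppf cohomology over the strictly henselian covers, whereas in the paper it is essential to the $p$-part torsion bound itself; that your transfer claim needs no such input is a symptom that it is unsubstantiated. (A smaller omission of the same kind: identifying the \v{C}ech complex of $U_n/U$ with the Hochschild complex of $H_n$ acting \emph{trivially} on $G$ is Kato's argument, Lemma \ref{1.3}, and in the paper requires first passing to the log point via Lemma \ref{1.2}.)
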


For the second higher direct image $R^2\varepsilon_{\mr{fl}*}G$, we give explicit description in the following three cases of the group scheme $G$.

\begin{thm}[See also Theorem \ref{2.6}]\label{0.2}
Assume that $G$ is smooth and affine over the underlying scheme of $X$, and has geometrically connected fibers over the underlying scheme of $X$. 
\begin{enumerate}[(1)]
\item We have 
$$R^2\varepsilon_{\mr{fl}*}G=\varinjlim_{n} (R^2\varepsilon_{\mr{fl}*}G)[n]=\bigoplus_{\text{$l$ prime}}(R^2\varepsilon_{\mr{fl}*}G)[l^\infty],$$
where $(R^2\varepsilon_{\mr{fl}*}G)[n]$ denotes the $n$-torsion subsheaf of $R^2\varepsilon_{\mr{fl}*}G$ and $(R^2\varepsilon_{\mr{fl}*}G)[l^\infty]$ denotes the $l$-primary part of $R^2\varepsilon_{\mr{fl}*}G$ for a prime number $l$.
\item The $l$-primary part $(R^2\varepsilon_{\mr{fl}*}G)[l^\infty]$ is supported on the locus where $l$ is invertible.
\item If $n$ is invertible on $X$, then 
$$(R^2\varepsilon_{\mr{fl}*}G)[n]=R^2\varepsilon_{\mr{fl}*}G[n]=G[n](-2)\otimes_{\Z}\bigwedge^2(\Gml/\Gm)_{X_{\mr{fl}}}.$$
\end{enumerate}
\end{thm}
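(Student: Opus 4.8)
The plan is to obtain (1) formally from Theorem~\ref{1.1}, to prove (3) by reducing $(R^2\varepsilon_{\mr{fl}*}G)[n]$ to $R^2\varepsilon_{\mr{fl}*}G[n]$ and computing the latter from the local structure of the Kummer logarithmic site, and to prove (2) by a d\'evissage in which the feature separating the flat from the Kummer \'etale topology becomes decisive.

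\emph{Part (1)} is formal. By Theorem~\ref{1.1}, $R^2\varepsilon_{\mr{fl}*}G$ is a torsion sheaf on $\fsXfl$, and for any torsion abelian sheaf $\mc F$ the canonical maps give $\mc F=\varinjlim_n\mc F[n]$ and $\mc F=\bigoplus_{l}\mc F[l^\infty]$; both identities may be checked on stalks, where they reduce to the elementary facts that a torsion abelian group is the filtered union of its $n$-torsion subgroups and the direct sum of its $l$-primary components.

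\emph{Part (3).} Fix $n$ invertible on $X$. Since $G$ is smooth with geometrically connected fibres, $\cdot n\colon G\to G$ is finite \'etale onto an open and closed subgroup, hence faithfully flat of finite presentation, so $0\to G[n]\to G\xrightarrow{\cdot n}G\to 0$ is exact on $\fsXkfl$ and $\cdot n\colon G\to G$ is also an epimorphism on $\fsXfl$. Representable functors being sheaves on $\fsXkfl$, we have $\varepsilon_{\mr{fl}*}G=G$; applying $R\varepsilon_{\mr{fl}*}$ to the displayed sequence then yields $R^1\varepsilon_{\mr{fl}*}G[n]=(R^1\varepsilon_{\mr{fl}*}G)[n]$ and an exact sequence
\[
0\to(R^1\varepsilon_{\mr{fl}*}G)/n\to R^2\varepsilon_{\mr{fl}*}G[n]\to(R^2\varepsilon_{\mr{fl}*}G)[n]\to 0.
\]
By the description of $R^1\varepsilon_{\mr{fl}*}G$ obtained in \cite{zha5}, this sheaf is $n$-divisible: for each prime $p$ dividing $n$ (necessarily invertible on $X$) its $p$-primary part is a filtered colimit $\varinjlim_m R^1\varepsilon_{\mr{fl}*}G[p^m]\cong\varinjlim_m G[p^m](-1)\otimes_{\Z}\overline{M}^{gp}_X/p^m$ with transition maps divisible by the relevant powers, while $n$ acts invertibly on the remaining primary parts. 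Hence $(R^1\varepsilon_{\mr{fl}*}G)/n=0$ and $(R^2\varepsilon_{\mr{fl}*}G)[n]=R^2\varepsilon_{\mr{fl}*}G[n]$, and it remains to compute the latter. As $n$ is invertible and $G$ smooth, $G[n]$ is finite \'etale over the underlying scheme of $X$, and by the structure theory of smooth affine commutative group schemes with connected fibres it is fibrewise of multiplicative type; thus $G[n](-1)=\mc{H}om(\mu_n,G[n])$ is an untwisted locally constant constructible sheaf of $\Z/n$-modules. For coefficients of order invertible on $X$, the Kummer flat, Kummer \'etale and classical topologies give the same cohomology, so $R^q\varepsilon_{\mr{fl}*}G[n]=R^q\varepsilon_{\mr{\acute{e}t}*}G[n]$; and the right-hand side may be computed \'etale-locally over a log geometric point, where the Kummer \'etale topos is governed by the logarithmic fundamental group $\widehat{\Z}(1)^{\oplus r}$ with $r=\operatorname{rank}\overline{M}^{gp}_X$. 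Its K\"unneth decomposition gives $R^q\varepsilon_{\mr{\acute{e}t}*}\mc F=\mc F(-q)\otimes_{\Z}\bigwedge^q_{\Z}(\Gml/\Gm)_{\Xfl}$ for every locally constant sheaf $\mc F$ of order invertible on $X$, where $(\Gml/\Gm)_{\Xfl}\cong\overline{M}^{gp}_X$ records the degree-one classes of the chart monoid elements; this being functorial in $\mc F$, it glues to a global statement. Taking $q=2$ and $\mc F=G[n]$ gives the asserted formula.

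\emph{Part (2), and the main obstacle.} Since the formation of $R^i\varepsilon_{\mr{fl}*}$ commutes with \'etale base change, it suffices to show that for every prime $l$ and every $m$ the sheaf $(R^2\varepsilon_{\mr{fl}*}G)[l^m]$ has trivial stalk at each geometric point of residue characteristic $l$; after passing to the strict henselization we may assume $X$ strictly henselian local of residue characteristic $l$. I would then use that $G$ is, fibrewise, an extension of a torus $T$ by a smooth connected unipotent group $U$, so the long exact sequence of $0\to U\to G\to T\to 0$ reduces the claim to $T$ and to $U$. For $U$ one reduces to $\Ga$, which is handled by the characteristic-$p$ refinement announced in the introduction. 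For $T$, multiplication by $l^m$ is faithfully flat, so $0\to T[l^m]\to T\xrightarrow{\cdot l^m}T\to 0$ is exact on $\fsXkfl$ even though $l$ is not invertible, reducing the claim to the vanishing, at residue characteristic $l$, of $R^{\ge 2}\varepsilon_{\mr{fl}*}$ on the finite flat multiplicative-type group scheme $T[l^m]$; and this last reduces, \'etale-locally and by multiplicativity, to $\mu_{l^m}$. Here the flat topology behaves differently from the Kummer \'etale one: Kato's $n$-divisibility of $\Gml$ on the Kummer flat site gives a distinguished triangle $R\varepsilon_{\mr{fl}*}\mu_{l^m}\to R\varepsilon_{\mr{fl}*}\Gml\xrightarrow{\cdot l^m}R\varepsilon_{\mr{fl}*}\Gml$, and the crucial input is that at residue characteristic $l$ the complex $R\varepsilon_{\mr{fl}*}\Gml$ is concentrated in degrees $\le 1$ with $l$-divisible $R^1$; consequently the degree-one logarithmic Kummer classes of $\mu_{l^m}$ — which, in contrast to the Kummer \'etale situation, do survive in characteristic $l$ — do not propagate to degree two, whence $R^{\ge 2}\varepsilon_{\mr{fl}*}\mu_{l^m}=0$ there. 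Establishing this behaviour of $R\varepsilon_{\mr{fl}*}\Gml$ in residue characteristic $l$ is, I expect, the technical heart and the main obstacle of the proof.
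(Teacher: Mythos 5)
Your part (2) contains the decisive gap, and you name it yourself: the whole d\'evissage at a geometric point of residue characteristic $l$ is made to rest on the claim that $R\varepsilon_{\mr{fl}*}\Gml$ is concentrated in degrees $\leq 1$ with $l$-divisible $R^1$, and you offer no proof of this ("the technical heart and the main obstacle"). This is not a step you can expect to import from the literature: vanishing statements of exactly this kind for $\Gm$-type coefficients in degree $2$ are precisely what this paper and its predecessor are about (indeed, by part (3) the sheaf $R^2\varepsilon_{\mr{fl}*}\Gm$ is nonzero as soon as the log rank is $\geq 2$ and some $n$ is invertible), so the proposed reduction trades the theorem for a statement that is at least as hard. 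The paper's mechanism is entirely different and never touches $\Gml$ in degree $2$: over a strictly henselian local base, $H^2_{\mr{kfl}}(X,G)$ is computed by \v{C}ech cohomology of the standard Kummer covers $X_n$ (Theorem \ref{2.1}); $\check{H}^2_{\mr{kfl}}(X_{p^r}/X,G)$ is identified with the Hochschild cohomology $H^2_{x_{\mr{fl}}}(H_{p^r},G)$ over the closed log point (Lemmas \ref{1.2}--\ref{1.4}), hence with $\mr{Ext}_{\mr{s}}(H_{p^r}\times_Xx,G\times_Xx)$, and this vanishes because $\mr{Ext}_{\mr{alg}}(H_{p^r}\times_Xx,T)=\mr{Ext}_{\mr{alg}}(H_{p^r}\times_Xx,U)=0$ by SGA3 (Lemma \ref{2.3}(2)); consequently $H^2_{\mr{kfl}}(X,G)=\varinjlim_{(n,p)=1}\check{H}^2_{\mr{kfl}}(X_n/X,G)=H^2_{\mr{k\acute{e}t}}(X,G)$ is $p$-torsion free (Lemma \ref{2.3}(3)), which is exactly part (2). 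Secondary defects in your part (2): the fibrewise structure sequence is $0\to T\to G\to U\to 0$ (the torus is the subgroup, not the quotient); you apply fibrewise structure theory over the strictly henselian base without justifying the passage to the closed log point, which for smooth $G$ is Corollary \ref{2.2} and rests on the \v{C}ech machinery, and which is not available in this form for coefficients such as $T[l^m]$ or $\mu_{l^m}$; and disposing of the unipotent part by "the characteristic-$p$ refinement announced in the introduction" is circular, since Theorem \ref{2.12} is deduced from Theorem \ref{2.6}.

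Parts (1) and (3) are in better shape. Part (1) is the paper's argument verbatim. Part (3) is workable in outline but takes a different route and leaves two inputs unjustified: the comparison $R^q\varepsilon_{\mr{fl}*}G[n]=R^q\varepsilon_{\mr{\acute{e}t}*}G[n]$ for coefficients of invertible order is asserted without proof or reference, and the local K\"unneth computation is "glued" without exhibiting a canonical global morphism. The paper instead constructs the global map by cup product (following Swan) from $G[n]\otimes\bigwedge^2R^1\varepsilon_{\mr{fl}*}\Z/n\Z$ to $R^2\varepsilon_{\mr{fl}*}G[n]$ and checks it is an isomorphism stalkwise via Corollary \ref{2.5}, which combines the surjectivity of multiplication by $n$ on $R^1\varepsilon_{\mr{fl}*}G$ (your divisibility step is exactly Lemma \ref{2.4}(1)), the comparison of Lemma \ref{2.3}(3), and the Kato--Nakayama computation of $H^2_{\mr{k\acute{e}t}}$. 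Note also that $G[n]$ is only quasi-finite \'etale (the torus rank of the fibres may jump), not finite \'etale or locally constant, and multiplication by $n$ on $G$ need not be finite. These issues in part (3) are repairable; the genuine gap is part (2).
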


\begin{thm}[See also Theorem \ref{2.7}]\label{0.3}
Assume that $G$ is finite and flat over the underlying scheme of $X$. Then we have
\begin{enumerate}[(1)]
\item $R^2\varepsilon_{\mr{fl}*}G=\bigoplus_{\text{$l$ prime}}(R^2\varepsilon_{\mr{fl}*}G)[l^\infty]$, where $(R^2\varepsilon_{\mr{fl}*}G)[l^\infty]$ denotes the $l$-primary part of $R^2\varepsilon_{\mr{fl}*}G$.
\item The $l$-primary part $(R^2\varepsilon_{\mr{fl}*}G)[l^\infty]$ is supported on the locus where $l$ is invertible.
\item Assume $X$ is connected, then the order of $G$ is constant on $X$. We denote the order by $n$ and assume that $n$ is invertible on $X$. Then 
$$R^2\varepsilon_{\mr{fl}*}G=G(-2)\otimes_{\Z}\bigwedge^2(\Gml/\Gm)_{X_{\mr{fl}}},$$
where $G(-2)$ denotes the tensor product of $G$ with $\Z/n\Z(-2)$.
\end{enumerate}
\end{thm}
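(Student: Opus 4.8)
The plan is to deduce all three parts from Theorem~\ref{2.6}, i.e.\ to reduce the finite flat case to the torus case.

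\textbf{Parts (1) and (2).} For (1): by Deligne's theorem a commutative finite flat group scheme is killed by its order, so $G$, hence every $R^i\varepsilon_{\mr{fl}*}G$, is annihilated by $n$ Zariski-locally on $X$; together with Theorem~\ref{1.1} this makes $R^2\varepsilon_{\mr{fl}*}G$ a sheaf of $\Z/n\Z$-modules, which splits canonically as $\bigoplus_{l\mid n}(R^2\varepsilon_{\mr{fl}*}G)[l^\infty]$. For (2): one must show $(R^2\varepsilon_{\mr{fl}*}G)[l^\infty]$ vanishes after restriction to the closed locus where $l$ is not invertible. Over that locus one works in characteristic $l$, where a Kummer covering of degree $l^k$ is already a classical flat covering; hence the Kummer log flat and classical flat topologies agree ``$l$-adically'' over such a base, so the $l$-primary part of any $R^i\varepsilon_{\mr{fl}*}$ vanishes there. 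I expect this to proceed exactly as the corresponding step in the proof of Theorem~\ref{2.6}, which I would simply import (see also \cite{zha5}).

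\textbf{Part (3).} Here the plan is to construct a two-term resolution of $G$ by tori and feed it into Theorem~\ref{2.6}. Since $n$ is invertible on $X$, both $G$ and its Cartier dual $G^\vee$ are finite étale over $X$, and the Cartier pairing $G\times_X G^\vee\to\Gm$ is perfect; consequently the adjoint homomorphism $G\to T:=\mathrm{Res}_{G^\vee/X}\Gm$ into the quasi-trivial torus $T$ is a closed immersion (it is a monomorphism since the pairing is perfect, and closed since $G$ is finite), and $T':=T/G$ is again a torus (a finite subgroup of a torus is of multiplicative type on fibres). The Weil restriction is available over the general locally noetherian $X$. We thus obtain a short exact sequence $0\to G\to T\to T'\to 0$ in $(\mr{fs}/X)_{\mr{fl}}$ with $T,T'$ smooth affine over $X$ with geometrically connected fibres. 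Applying $R\varepsilon_{\mr{fl}*}$ and using that $T\to T'$ is an fppf $G$-torsor, hence an epimorphism of flat sheaves, yields
$$0\to\mathrm{coker}\big(R^1\varepsilon_{\mr{fl}*}T\to R^1\varepsilon_{\mr{fl}*}T'\big)\to R^2\varepsilon_{\mr{fl}*}G\to\ker\big(R^2\varepsilon_{\mr{fl}*}T\to R^2\varepsilon_{\mr{fl}*}T'\big)\to 0.$$
Now decompose into primary parts. For $l\nmid n$ one has $G[l^\infty]=0$, so $T\to T'$ is an isomorphism on all $l$-primary torsion subgroups, hence, by Theorem~\ref{2.6} (and its degree-one counterpart available from \cite{zha5}), on the $l$-primary parts of $R^i\varepsilon_{\mr{fl}*}$ for $i=1,2$, which therefore contribute nothing. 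For $l\mid n$, Theorem~\ref{2.6}(3) gives $(R^2\varepsilon_{\mr{fl}*}T)[l^\infty]=T[l^\infty](-2)\otimes_\Z\bigwedge^2(\Gml/\Gm)_{X_{\mr{fl}}}$ and likewise for $T'$; since $(\Gml/\Gm)_{X_{\mr{fl}}}$ is torsion-free, so that $\bigwedge^2$ of it is flat over $\Z$, tensoring the surjection $T[l^\infty]\to T'[l^\infty]$, with kernel $G[l^\infty]$, by it stays exact, while the corresponding degree-one map is surjective. Summing over $l$, the cokernel term vanishes and the kernel term is $\bigoplus_{l\mid n}G[l^\infty](-2)\otimes\bigwedge^2(\Gml/\Gm)_{X_{\mr{fl}}}=G(-2)\otimes\bigwedge^2(\Gml/\Gm)_{X_{\mr{fl}}}$, giving the asserted formula.

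\textbf{Main obstacle.} The delicate point is the bookkeeping in Part (3): one needs exact control of $R^1\varepsilon_{\mr{fl}*}$ and $R^2\varepsilon_{\mr{fl}*}$ of tori on \emph{all} primary parts (routing the primes $l\nmid n$ through the support statement of Theorem~\ref{2.6}(2)), as well as of the connecting homomorphism in degree one, so the degree-one analogue of Theorem~\ref{2.6} and the torsion-freeness of $(\Gml/\Gm)_{X_{\mr{fl}}}$ are indispensable. Secondary points are verifying that $G\to\mathrm{Res}_{G^\vee/X}\Gm$ is a closed immersion and $T/G$ a torus over the general base, and, for Part (2), the fact that passing to a characteristic-$l$ base annihilates the $l$-primary part of the Kummer-to-classical comparison.
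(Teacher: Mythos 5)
Your parts (1) and (3) are essentially sound, and part (3) is a genuinely different route from the paper's: since $n$ is invertible you resolve $G$ by tori, $0\to G\to \mr{Res}_{G^\vee/X}\Gm\to T'\to 0$, and read off $R^2\varepsilon_{\mr{fl}*}G$ primary part by primary part from the torus case of Theorem \ref{2.6} plus surjectivity in degree one; the paper instead uses the B\'egueri/Weil-restriction resolution by smooth affine group schemes with geometrically connected fibers (valid with no invertibility hypothesis), the surjectivity statement of Lemma \ref{2.4} (1), and a cup-product comparison diagram closed by the five lemma. Your route buys a shorter bookkeeping argument in the invertible case; the paper's buys a single resolution that serves parts (2) and (3) simultaneously and exhibits the isomorphism as the cup-product map.

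The genuine gap is in part (2). Your stated mechanism --- that over the locus where $l$ is not invertible a Kummer cover of $l$-power degree is already a classical flat cover, so that ``the $l$-primary part of any $R^i\varepsilon_{\mr{fl}*}$ vanishes there'' --- is false. A Kummer cover such as $X_{l^r}\to X$ is not strict, hence is never a covering in $(\mr{fs}/X)_{\mr{fl}}$, and the principle you invoke already fails in degree one: by Kato's formula $R^1\varepsilon_{\mr{fl}*}G=\varinjlim_m\mc{H}om_X(\Z/m\Z(1),G)\otimes_{\Z}(\Gml/\Gm)_{X_{\mr{fl}}}$, so for $G=\Gm$ in residue characteristic $l$ the $l$-primary part is $(\Q_l/\Z_l)\otimes_{\Z}(\Gml/\Gm)_{X_{\mr{fl}}}\neq 0$. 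The true degree-two vanishing in residue characteristic $l$ is the content of Lemma \ref{2.3} (2), proved via the identification of \v{C}ech with Hochschild cohomology (Corollary \ref{1.4}) and the SGA3 computation of $\mr{Ext}$ groups of $H_{l^r}$ by tori and unipotent groups; it is not a formal comparison of topologies. Moreover you cannot ``simply import'' Theorem \ref{2.6} (2), because your $G$ is not smooth, and your torus resolution from part (3) is unavailable precisely where part (2) has content (on that locus $l\mid n$ is not invertible, $G^\vee$ need not be \'etale, and $\mr{Res}_{G^\vee/X}\Gm$ is not a torus). What is missing is a bridge valid without invertibility: e.g.\ the resolution $0\to G\to G_1\to G_2\to 0$ with $G_1=\mr{Res}_{G^\vee/X}\Gm$ smooth affine with geometrically connected fibers, together with the surjectivity of $R^1\varepsilon_{\mr{fl}*}G_1\to R^1\varepsilon_{\mr{fl}*}G_2$ (Lemma \ref{2.4} (1)), which embeds $R^2\varepsilon_{\mr{fl}*}G$ into $R^2\varepsilon_{\mr{fl}*}G_1$ and lets Theorem \ref{2.6} (2) apply; this is exactly how the paper proves part (2).
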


\begin{thm}[See also Theorem \ref{2.9}]\label{0.4}
Assume that $G$ is an extension of an abelian scheme $A$ by a torus $T$ over $X$. Then we have
\begin{enumerate}[(1)]
\item $R^2\varepsilon_{\mr{fl}*}G=\bigoplus_{\text{$l$ prime}}(R^2\varepsilon_{\mr{fl}*}G)[l^\infty]$, where $(R^2\varepsilon_{\mr{fl}*}G)[l^\infty]$ denotes the $l$-primary part of $R^2\varepsilon_{\mr{fl}*}G$.
\item We have $(R^2\varepsilon_{\mr{fl}*}G)[n]=R^2\varepsilon_{\mr{fl}*}G[n]$.
\item The $l$-primary part $(R^2\varepsilon_{\mr{fl}*}G)[l^\infty]$ is supported on the locus where $l$ is invertible.
\item If $n$ is invertible on $X$, then 
$$(R^2\varepsilon_{\mr{fl}*}G)[n]=R^2\varepsilon_{\mr{fl}*}G[n]=G[n](-2)\otimes_{\Z}\bigwedge^2(\Gml/\Gm)_{X_{\mr{fl}}},$$
where $G[n](-2)$ denotes the tensor product of $G[n]$ with $\Z/n\Z(-2)$.
\end{enumerate}
\end{thm}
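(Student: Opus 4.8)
The plan is to reduce everything to the torsion statement of Theorem~\ref{1.1} and to the finite flat case of Theorem~\ref{2.7}, by combining the defining short exact sequence $0\to T\to G\to A\to 0$ with the Kummer sequences $0\to G[n]\to G\xrightarrow{n}G\to 0$. First I would record three elementary facts about $G$: it is smooth over $X$, being an extension of the smooth $A$ by the smooth $T$; it is $n$-divisible as a sheaf on $(\mr{fs}/X)_{\mr{fl}}$ for every $n$, since multiplication by $n$ on $A$ and the $n$-th power map on $T$ are both faithfully flat, so that $G/nG=0$ on $(\mr{fs}/X)_{\mr{fl}}$ and the Kummer sequence above is exact there, hence, after applying the exact functor $\varepsilon_{\mr{fl}}^{*}$, on $(\mr{fs}/X)_{\mr{kfl}}$; and $G[n]$ is finite and flat over the underlying scheme of $X$, being an extension of the finite flat $A[n]$ by the finite flat $T[n]$, of order $n^{2\dim A+\dim T}$ on each connected component. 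Part~(1) is then immediate: $R^{2}\varepsilon_{\mr{fl}*}G$ is torsion by Theorem~\ref{1.1}, and any torsion abelian sheaf is the filtered colimit of its $n$-torsion subsheaves and canonically the direct sum of its $l$-primary parts.

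For part~(3), fix a prime $l$ and apply $R\varepsilon_{\mr{fl}*}$ to $0\to G[l^{m}]\to G\xrightarrow{l^{m}}G\to 0$; the long exact sequence produces a surjection $R^{2}\varepsilon_{\mr{fl}*}(G[l^{m}])\twoheadrightarrow (R^{2}\varepsilon_{\mr{fl}*}G)[l^{m}]$. Since $G[l^{m}]$ is finite flat and killed by $l^{m}$, the source is an $l$-primary torsion sheaf, which by Theorem~\ref{2.7}(2) is supported on the locus where $l$ is invertible; hence so is the quotient $(R^{2}\varepsilon_{\mr{fl}*}G)[l^{m}]$, and passing to the colimit over $m$ gives~(3).

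For part~(2), the long exact sequence of $0\to G[n]\to G\xrightarrow{n}G\to 0$ shows that the natural surjection $R^{2}\varepsilon_{\mr{fl}*}(G[n])\twoheadrightarrow (R^{2}\varepsilon_{\mr{fl}*}G)[n]$ has kernel the cokernel of multiplication by $n$ on $R^{1}\varepsilon_{\mr{fl}*}G$; thus~(2) is equivalent to the assertion that $R^{1}\varepsilon_{\mr{fl}*}G$ is divisible. This is the \emph{main obstacle}, and it is where the hypothesis on $G$ really enters: for a general smooth affine $G$ (e.g.\ $\Ga$ in characteristic $p$) divisibility of $R^{1}\varepsilon_{\mr{fl}*}G$ fails, which is precisely why the analogous statement in Theorem~\ref{2.6}(3) carries an invertibility hypothesis that is absent here. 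I would establish it by noting that the $H^{1}$-terms of the same Kummer sequences, together with $G/nG=0$ and the standard fact $\varepsilon_{\mr{fl}*}G=G$, give $R^{1}\varepsilon_{\mr{fl}*}(G[n])\cong (R^{1}\varepsilon_{\mr{fl}*}G)[n]$, so that Theorem~\ref{1.1} yields $R^{1}\varepsilon_{\mr{fl}*}G=\varinjlim_{n}R^{1}\varepsilon_{\mr{fl}*}(G[n])$ with transition maps induced by the inclusions $G[n]\hookrightarrow G[mn]$; comparing this presentation, through the explicit description of $R^{1}\varepsilon_{\mr{fl}*}$ of finite flat group schemes (equivalently, quoting the computation of $R^{1}\varepsilon_{\mr{fl}*}$ from \cite{zha5}), with the divisible sheaf $\varinjlim_{n}G[n]$ — divisible because $G$ is divisible on $(\mr{fs}/X)_{\mr{fl}}$ — shows that $R^{1}\varepsilon_{\mr{fl}*}G$ is divisible.

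Finally, part~(4) follows by feeding part~(2) into the finite flat computation: if $n$ is invertible on $X$ then $G[n]$ is finite flat of order invertible on $X$, so Theorem~\ref{2.7}(3) (applied componentwise on $X$ if $X$ is not connected) gives $R^{2}\varepsilon_{\mr{fl}*}(G[n])=G[n](-2)\otimes_{\Z}\bigwedge^{2}(\Gml/\Gm)_{X_{\mr{fl}}}$ with $G[n](-2)=G[n]\otimes_{\Z}\Z/n\Z(-2)$, and combining with the isomorphism $(R^{2}\varepsilon_{\mr{fl}*}G)[n]=R^{2}\varepsilon_{\mr{fl}*}(G[n])$ of part~(2) yields~(4).
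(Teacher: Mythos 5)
Your overall architecture matches the paper's: part (1) from Theorem \ref{1.1}, parts (3)--(4) from the Kummer sequence $0\to G[n]\to G\xrightarrow{n}G\to 0$ (which is indeed exact on $\fsXkfl$ since multiplication by $n$ is faithfully flat on a semiabelian scheme and $G[n]$ is finite flat) together with Theorem \ref{2.7}, and part (2) reduced to the vanishing of $R^1\varepsilon_{\mr{fl}*}G\otimes_\Z\Z/n\Z$, i.e.\ to divisibility of $R^1\varepsilon_{\mr{fl}*}G$. That reduction is exactly where the paper invokes Lemma \ref{2.4}(2), and it is precisely this step that your proposal does not actually prove. Your justification is a ``comparison'' of $R^1\varepsilon_{\mr{fl}*}G=\varinjlim_n R^1\varepsilon_{\mr{fl}*}(G[n])$ with the sheaf $\varinjlim_n G[n]$, which is divisible because $G$ is divisible as an fppf sheaf. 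But by the explicit formula $R^1\varepsilon_{\mr{fl}*}G=\varinjlim_m\mc{H}om_X(\Z/m\Z(1),G)\otimes_\Z(\Gml/\Gm)_{X_{\mr{fl}}}$, what has to be shown is divisibility of $\varinjlim_m\mc{H}om_X(\Z/m\Z(1),G)$, and this does \emph{not} follow formally from divisibility of $\varinjlim_n G[n]$: the functor $\mc{H}om(\Z/m\Z(1),-)$ is only left exact, and in residue characteristic $p$ the $p$-part of $\varinjlim_m\mc{H}om_x(\Z/m\Z(1),G)$ is controlled by the multiplicative part $G[p^\infty]_{\mr m}$ of the $p$-divisible group, not by the torsion subsheaf: e.g.\ for $G=\Gm$ over $\overline{\F}_p$ one has $(\varinjlim_n G[n])[p^\infty]=0$ while $\varinjlim_r\mc{H}om(\Z/p^r\Z(1),\Gm)=\Q_p/\Z_p$. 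What is really needed is that $G[p^\infty]_{\mr m}$ is a $p$-divisible group, isomorphic to $(\Q_p/\Z_p(1))^{r+s}$ over a separably closed field, so that $\varinjlim_r\mc{H}om(\Z/p^r\Z(1),G)\cong(\Q_p/\Z_p)^{r+s}$ is divisible; that stalkwise analysis (after reducing to geometric points via representability of $\mc{H}om_X(\Z/n\Z(1),G)$) is the content of the paper's Lemma \ref{2.4}(2), and your sketch skips it. So the proof is incomplete at its crucial point, although the missing piece is available and would slot into your reduction.

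A secondary but telling inaccuracy: you claim divisibility of $R^1\varepsilon_{\mr{fl}*}G$ fails for $G=\Ga$ in characteristic $p$ and that this explains the invertibility hypothesis in Theorem \ref{2.6}(3). In fact $R^1\varepsilon_{\mr{fl}*}\Ga=\varinjlim_n\mc{H}om_X(\Z/n\Z(1),\Ga)\otimes_\Z(\Gml/\Gm)_{X_{\mr{fl}}}=0$ (no nonzero maps from multiplicative-type to unipotent groups), and more generally the formula shows $R^1\varepsilon_{\mr{fl}*}G$ is divisible for any smooth affine $G$ with connected fibers. The actual reason Theorem \ref{2.6}(3) needs $n$ invertible is that for general smooth affine $G$ the Kummer sequence itself breaks down: multiplication by $n$ need not be an epimorphism of kfl sheaves and $G[n]$ need not be flat (cf.\ the remark following Theorem \ref{2.6}), whereas for semiabelian $G$ the sequence is always exact. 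Your part (3) argument (surjection $R^2\varepsilon_{\mr{fl}*}G[l^m]\twoheadrightarrow(R^2\varepsilon_{\mr{fl}*}G)[l^m]$ plus Theorem \ref{2.7}(2)) is fine and is a mild, legitimate variant of the paper's route, which instead deduces (3) from (2).
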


As applications of Theorem \ref{0.2}, Theorem \ref{0.3}, and Theorem \ref{0.4}, we have more explicit description of $R^2\varepsilon_{\mr{fl}*}G$ in certain special cases of the base.

The first special case concerns the ranks of the log structure at geometric points of the base. If $Y$ is an fs log scheme over $X$ such that the rank $(M_Y^{\mr{gp}}/\mc{O}_Y^\times)_y$ is at most one for any geometric point $y$ of $Y$, then the restriction of the sheaf $\bigwedge^2(\Gml/\Gm)_{X_{\mr{fl}}}$ to $(\mr{st}/Y)$ is zero, where $(\mr{st}/Y)$ denotes the full subcategory of $(\mr{fs}/X)$ consisting of strict fs log schemes over $Y$. Thus Theorem \ref{0.2} (especially part (3)), Theorem \ref{0.3} (especially part (3)), and Theorem \ref{0.4} (especially part (4)) imply the following theorem.

\begin{thm}[See also Theorem \ref{2.10}]\label{0.5}
Assume that $G$ satisfies any of the following three conditions:
\begin{enumerate}[(i)]
\item $G$ is smooth and affine over the underlying scheme of $X$, and has geometrically connected fibers over the underlying scheme of $X$;
\item $G$ is finite flat of order $n$ over the underlying scheme of $X$, and for any prime number $l$ the kernel of $G\xrightarrow{l^n}G$ is also finite flat over the underlying scheme of $X$;
\item $G$ is an extension of an abelian scheme by a torus over the underlying scheme of $X$.
\end{enumerate}
Let $Y\in (\mr{fs}/X)$ be such that the ranks of the stalks of the \'etale sheaf $M_Y^{\mr{gp}}/\mc{O}_Y^{\times}$ are at most one, and let $(\mr{st}/Y)$ be the full subcategory of $(\mr{fs}/X)$ consisting of strict fs log schemes over $Y$. Then the restriction of $R^2\varepsilon_{\mr{fl}*}G$ to $(\mr{st}/Y)$ is zero.
\end{thm}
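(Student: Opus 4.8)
The plan is to deduce the theorem from Theorems \ref{0.2}, \ref{0.3} and \ref{0.4}: one combines, for each of the three cases, the $l$-primary decomposition, the support statement and the explicit formula of the relevant theorem with the fact, recorded above, that $\bigwedge^2(\Gml/\Gm)_{X_{\mr{fl}}}$ restricts to zero on strict fs log schemes whose log ranks are at most one.

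First I would reduce to a single prime. By part (1) of the theorem appropriate to the case of $G$ one has $R^2\varepsilon_{\mr{fl}*}G=\bigoplus_{l\text{ prime}}(R^2\varepsilon_{\mr{fl}*}G)[l^\infty]$, so it is enough to show that each summand $(R^2\varepsilon_{\mr{fl}*}G)[l^\infty]$ restricts to zero on $(\mr{st}/Y)$. Fix $l$ and let $U_l\subseteq X$ be the open subscheme on which $l$ is invertible. By the support statement (part (2), resp.\ part (3) in Theorem \ref{0.4}), the sheaf $(R^2\varepsilon_{\mr{fl}*}G)[l^\infty]$ is the extension by zero of its restriction to $(\mr{fs}/U_l)$; since extension by zero along an open immersion injects into the corresponding direct image, the restriction map $(R^2\varepsilon_{\mr{fl}*}G)[l^\infty](Z)\to(R^2\varepsilon_{\mr{fl}*}G)[l^\infty](Z\times_X U_l)$ is injective for every $Z\in(\mr{fs}/X)$. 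As $Y\times_X U_l\to Y$ is strict, $(\mr{st}/(Y\times_X U_l))$ is a full subcategory of $(\mr{st}/Y)$ and contains $Z\times_X U_l$ whenever $Z\in(\mr{st}/Y)$; hence it suffices to prove that $(R^2\varepsilon_{\mr{fl}*}G)[l^\infty]$ restricts to zero on $(\mr{st}/(Y\times_X U_l))$.

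Next I would compute over $U_l$, where $l$ is invertible. Since $R\varepsilon_{\mr{fl}*}$ commutes with restriction to open subschemes of $X$, the sheaf $(R^2\varepsilon_{\mr{fl}*}G)[l^\infty]|_{U_l}$ is the $l$-primary part of $R^2\varepsilon_{\mr{fl}*}(G|_{U_l})$. In cases (i) and (iii), Theorem \ref{0.2}(3), resp.\ Theorem \ref{0.4}(4), applied over $U_l$ with $n=l^k$ identifies $(R^2\varepsilon_{\mr{fl}*}G)[l^k]|_{U_l}$ with $G[l^k](-2)\otimes_{\Z}\bigwedge^2(\Gml/\Gm)_{(U_l)_{\mr{fl}}}$ for every $k$, hence $(R^2\varepsilon_{\mr{fl}*}G)[l^\infty]|_{U_l}$ with a filtered colimit of such sheaves. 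In case (ii), the extra hypothesis that $G_l:=\ker(G\xrightarrow{l^n}G)$ be finite flat yields a splitting $G\cong G_l\times G^{(l')}$ over $X$ with $G^{(l')}$ finite flat of order prime to $l$; since $R^2\varepsilon_{\mr{fl}*}G^{(l')}$ is then killed by an integer prime to $l$ and contributes nothing to the $l$-primary part, we get $(R^2\varepsilon_{\mr{fl}*}G)[l^\infty]=R^2\varepsilon_{\mr{fl}*}G_l$, and Theorem \ref{0.3}(3), applied over the connected components of $U_l$ (on each of which $G_l$ has constant order a power of $l$, now invertible), identifies $R^2\varepsilon_{\mr{fl}*}G_l|_{U_l}$ with $G_l(-2)\otimes_{\Z}\bigwedge^2(\Gml/\Gm)_{(U_l)_{\mr{fl}}}$. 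In all three cases $(R^2\varepsilon_{\mr{fl}*}G)[l^\infty]|_{U_l}$ is thus a filtered colimit of tensor products over $\Z$ with $\bigwedge^2(\Gml/\Gm)_{(U_l)_{\mr{fl}}}$; since the log ranks over $Y\times_X U_l$ are still at most one (strictness of $Y\times_X U_l\to Y$), the sheaf $\bigwedge^2(\Gml/\Gm)$ restricts to zero on $(\mr{st}/(Y\times_X U_l))$, and therefore so does $(R^2\varepsilon_{\mr{fl}*}G)[l^\infty]$. Together with the previous paragraph this gives $(R^2\varepsilon_{\mr{fl}*}G)[l^\infty]|_{(\mr{st}/Y)}=0$ for every $l$, and summing over $l$ finishes the argument.

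The step I expect to be the main obstacle is not conceptual but the careful handling of the support statement when passing from $(\mr{st}/(Y\times_X U_l))$ back to $(\mr{st}/Y)$: the complement of $U_l$ in $X$ is closed rather than open, so one cannot simply glue over an open cover, and must instead use the extension-by-zero description of ``supported on $U_l$'' together with the monomorphism $j_{l!}\hookrightarrow j_{l*}$ for the open immersion of topoi $(\mr{fs}/U_l)\hookrightarrow(\mr{fs}/X)$. A secondary point requiring care is case (ii), namely verifying the splitting $G\cong G_l\times G^{(l')}$ over $X$ from the finite flatness of $G_l$, and the constancy of the $l$-power orders on the connected components of $U_l$ that is needed in order to invoke Theorem \ref{0.3}(3).
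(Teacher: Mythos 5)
Your proposal is correct and follows essentially the same route as the paper's proof of Theorem \ref{2.10}: decompose $R^2\varepsilon_{\mr{fl}*}G$ into $l$-primary parts, use the support statements to reduce to the locus where $l$ is invertible, invoke the explicit formulas of Theorem \ref{0.2}(3), Theorem \ref{0.3}(3), Theorem \ref{0.4}(4), and conclude from the vanishing of $\bigwedge^2(\Gml/\Gm)_{X_{\mr{fl}}}$ on strict fs log schemes whose log ranks are at most one. The only differences are cosmetic: in case (ii) the paper works with the exact sequence $0\to G[l^n]\to G\to G/G[l^n]\to 0$ and the vanishing of the $l$-primary part of $R^i\varepsilon_{\mr{fl}*}(G/G[l^n])$ instead of your CRT splitting $G\cong G[l^n]\times G^{(l')}$ (the two arguments rest on the same fact), and your extension-by-zero handling of the support step is a more explicit version of the paper's brief reduction to the case that $l$ is invertible.
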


The rest two special cases involve the characteristic of the base.

Assume that the underlying scheme of $X$ is over $\Q$, i.e. any positive integer $n$ is invertible on $X$. Then we have an explicit description of $(R^2\varepsilon_{\mr{fl}*}G)[n]$ for any $n$ in the case (i) and the case (iii) from Theorem \ref{0.5} by Theorem \ref{0.2} (3) and Theorem \ref{0.4} (4) respectively, as well as an explicit description of $R^2\varepsilon_{\mr{fl}*}G$ in the case (ii) from Theorem \ref{0.5} by Theorem \ref{0.3} (3). Thus we get the following theorem.

\begin{thm}[See also Theorem \ref{2.11}]
Assume that the underlying scheme of $X$ is a $\Q$-scheme, and let $G$ be as in any of the three cases of Theorem \ref{0.5}.
Then we have 
\[R^2\varepsilon_{\mr{fl}*}G=\varinjlim_{n}G[n](-2)\otimes_{\Z}\bigwedge^2(\Gml/\Gm)_{X_{\mr{fl}}}\]
in cases (i) and (iii), and
\[R^2\varepsilon_{\mr{fl}*}G=G(-2)\otimes_{\Z}\bigwedge^2(\Gml/\Gm)_{X_{\mr{fl}}}\]
in case (ii).
\end{thm}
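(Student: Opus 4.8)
The plan is to deduce the statement directly from Theorem~\ref{0.5} together with Theorems~\ref{0.2}, \ref{0.3}, and \ref{0.4}, by exploiting the hypothesis that every positive integer is invertible on $X$. First I would invoke Theorem~\ref{0.5} to ensure we are in one of its three admissible cases (i), (ii), (iii), so that the explicit descriptions of the torsion pieces are available; note that case (ii) of Theorem~\ref{0.5} is the relevant subcase of the finite-flat situation, since over a $\Q$-scheme the kernel of multiplication by any integer on a finite flat group scheme is automatically finite flat (multiplication by an integer invertible on the base is finite \'etale on a finite flat group scheme).

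In cases (i) and (iii), I would argue as follows. By Theorem~\ref{0.2}(1) (resp.\ Theorem~\ref{0.4}(1)) we have
\[
R^2\varepsilon_{\mr{fl}*}G=\varinjlim_{n}(R^2\varepsilon_{\mr{fl}*}G)[n].
\]
Since $X$ is a $\Q$-scheme, every $n$ is invertible on $X$, so Theorem~\ref{0.2}(3) (resp.\ Theorem~\ref{0.4}(4)) applies for every $n$ and gives
\[
(R^2\varepsilon_{\mr{fl}*}G)[n]=G[n](-2)\otimes_{\Z}\bigwedge^2(\Gml/\Gm)_{X_{\mr{fl}}}.
\]
Combining these two displays yields the asserted formula $R^2\varepsilon_{\mr{fl}*}G=\varinjlim_n G[n](-2)\otimes_{\Z}\bigwedge^2(\Gml/\Gm)_{X_{\mr{fl}}}$. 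One point worth checking is that the transition maps in the colimit on the right-hand side are the ones induced by the inclusions $G[n]\hookrightarrow G[nm]$ tensored with the (untwisted) structure sheaf factor, and that these agree under the identification with the transition maps of the colimit $\varinjlim_n (R^2\varepsilon_{\mr{fl}*}G)[n]$; this is where one has to be a little careful, but it follows from the naturality of the isomorphism in Theorem~\ref{0.2}(3) / Theorem~\ref{0.4}(4) in the variable $n$.

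In case (ii), the group scheme $G$ is finite flat of order $n$, and since $X$ is connected (or, working componentwise, on each connected component) the order is a constant $n$ which by hypothesis is invertible on $X$. Hence Theorem~\ref{0.3}(3) applies verbatim and gives
\[
R^2\varepsilon_{\mr{fl}*}G=G(-2)\otimes_{\Z}\bigwedge^2(\Gml/\Gm)_{X_{\mr{fl}}},
\]
with no colimit needed since $G$ is already killed by $n$. If $X$ is not assumed connected one glues these descriptions over the connected components; the formula is local on $X_{\mr{fl}}$ so this causes no difficulty.

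The only genuinely non-routine point is the compatibility of transition maps in cases (i) and (iii) mentioned above: one must verify that the isomorphisms $(R^2\varepsilon_{\mr{fl}*}G)[n]\cong G[n](-2)\otimes_\Z\bigwedge^2(\Gml/\Gm)_{X_{\mr{fl}}}$ provided by the earlier theorems form a compatible system as $n$ varies through the divisibility order, so that passing to the colimit is legitimate. I expect this to follow immediately from the construction of those isomorphisms (they arise from a cup-product / Kummer-sequence computation that is manifestly functorial in the multiplication-by-$n$ maps), so the proof is essentially a formal assembly of previously established results.
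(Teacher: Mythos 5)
Your proposal is correct and follows essentially the same route as the paper, whose proof of Theorem \ref{2.11} is precisely the formal assembly of Theorem \ref{2.6}, Theorem \ref{2.7}, and Theorem \ref{2.9} using the fact that every integer is invertible on a $\Q$-scheme. The compatibility of the identifications as $n$ varies, which you flag, is indeed the only non-formal point and is handled (implicitly, via functoriality of the cup-product construction) in the paper as well.
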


Assume that the underlying scheme of $X$ is an $\F_p$-scheme for a prime number $p$. Then any prime $l\neq p$ is invertible on $X$, hence we have an explicit description of $(R^2\varepsilon_{\mr{fl}*}G)[l^\infty]$ in the case (i) and the case (iii) from Theorem \ref{0.5} by Theorem \ref{0.2} (3) and Theorem \ref{0.4} (4) respectively. And we also have $(R^2\varepsilon_{\mr{fl}*}G)[p^\infty]=0$ in these two cases by Theorem \ref{0.2} (2) and Theorem \ref{0.4} (3) respectively. In a suitable modified version of the case (ii) from Theorem \ref{0.5}, we have similar results. Therefore we get the following theorem.

\begin{thm}[See also Theorem \ref{2.12}]
Let $p$ be a prime number. Assume that the underlying scheme of $X$ is an $\F_p$-scheme, and $G$ satisfies one of the following three conditions:
\begin{enumerate}[(i)]
\item $G$ is smooth and affine over the underlying scheme of $X$, and has geometrically connected fibers over the underlying scheme of $X$;
\item $G$ is finite flat of order $n$ over the underlying scheme of $X$, and the kernel $G[p^n]$ of $G\xrightarrow{p^n}G$ is also finite flat over the underlying scheme of $X$;
\item $G$ is an extension of an abelian scheme by a torus over the underlying scheme of $X$.
\end{enumerate}
Then we have 
$$R^2\varepsilon_{\mr{fl}*}G=\varinjlim_{(r,p)=1}G[r](-2)\otimes_{\Z}\bigwedge^2(\Gml/\Gm)_{X_{\mr{fl}}}$$
in cases (i) and (iii), and 
$$R^2\varepsilon_{\mr{fl}*}G=(G/G[p^n])(-2)\otimes_{\Z}\bigwedge^2(\Gml/\Gm)_{X_{\mr{fl}}}$$
in case (ii).
\end{thm}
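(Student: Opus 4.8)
The plan is to reduce each of the three cases to Theorems~\ref{0.2}, \ref{0.3}, and \ref{0.4}, the only new geometric input being that the underlying scheme of $X$, being an $\F_p$-scheme, has empty locus of invertibility of $p$, while every prime $l\neq p$ is invertible on all of $X$.

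For cases (i) and (iii), start from the primary decomposition $R^2\varepsilon_{\mr{fl}*}G=\bigoplus_{l}(R^2\varepsilon_{\mr{fl}*}G)[l^\infty]$ provided by Theorem~\ref{0.2}(1), resp.\ Theorem~\ref{0.4}(1). By Theorem~\ref{0.2}(2), resp.\ Theorem~\ref{0.4}(3), the summand $(R^2\varepsilon_{\mr{fl}*}G)[p^\infty]$ is supported on the empty locus where $p$ is invertible and hence vanishes, so
\[R^2\varepsilon_{\mr{fl}*}G=\bigoplus_{l\neq p}(R^2\varepsilon_{\mr{fl}*}G)[l^\infty]=\varinjlim_{(r,p)=1}(R^2\varepsilon_{\mr{fl}*}G)[r],\]
the colimit running over positive integers prime to $p$ under divisibility. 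For such $r$ the integer $r$ is invertible on $X$, so Theorem~\ref{0.2}(3), resp.\ Theorem~\ref{0.4}(4), identifies $(R^2\varepsilon_{\mr{fl}*}G)[r]$ with $G[r](-2)\otimes_{\Z}\bigwedge^2(\Gml/\Gm)_{X_{\mr{fl}}}$, compatibly with the transition maps of the system (on the target side those induced by the inclusions $G[r]\hookrightarrow G[r']$). Passing to the colimit gives the stated formula.

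For case (ii), first split $G$. A commutative finite flat group scheme is annihilated by its order $n$ (a theorem of Deligne), so writing $n=p^{a}m$ with $(m,p)=1$ and choosing integers with $up^{a}+vm=1$ produces orthogonal idempotents in $\mathrm{End}(G)$ and hence a decomposition $G\cong G[p^{n}]\times(G/G[p^{n}])$, in which $G[p^{n}]$ is finite flat (as assumed) of $p$-power order and $G/G[p^{n}]$ is finite flat of order $m$, hence finite \'etale since $m$ is invertible on $X$. Applying $R^2\varepsilon_{\mr{fl}*}$ and using additivity, $R^2\varepsilon_{\mr{fl}*}G\cong R^2\varepsilon_{\mr{fl}*}G[p^{n}]\oplus R^2\varepsilon_{\mr{fl}*}(G/G[p^{n}])$. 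The first summand is killed by $p^{a}$, so it equals its own $p$-primary part, which by Theorem~\ref{0.3}(2) is supported on the empty locus where $p$ is invertible and therefore vanishes. For the second summand, the sought identification is an isomorphism of sheaves and may be checked on geometric stalks, i.e.\ after base change to a strictly henselian (in particular connected) local scheme, over which $G/G[p^{n}]$ has constant order $m$ invertible on the base; Theorem~\ref{0.3}(3) then gives $R^2\varepsilon_{\mr{fl}*}(G/G[p^{n}])=(G/G[p^{n}])(-2)\otimes_{\Z}\bigwedge^2(\Gml/\Gm)_{X_{\mr{fl}}}$. Combining the two computations yields the formula in case (ii).

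The step demanding the most care is the bookkeeping in case (ii): one must check that $G[p^{n}]$ is really the full $p$-primary part of $G$, equivalently that $G/G[p^{n}]$ has order prime to $p$, which holds because $v_p(n)\le n$ forces $G[p^{n}]=G[p^{\infty}]$ fibrewise; and one must accommodate the possible disconnectedness of the underlying scheme of $X$ when invoking Theorem~\ref{0.3}(3), which is the reason for passing to stalks. Cases (i) and (iii), by contrast, are formal consequences of Theorems~\ref{0.2} and \ref{0.4}.
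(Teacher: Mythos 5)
Your argument is correct. For cases (i) and (iii) you follow essentially the paper's own route: kill the $p$-primary part using the support statement (Theorem \ref{2.6} (2), Theorem \ref{2.9} (3)), write the torsion sheaf as $\varinjlim_{(r,p)=1}(R^2\varepsilon_{\mr{fl}*}G)[r]$, and identify each term by Theorem \ref{2.6} (3), resp.\ Theorem \ref{2.9} (4). In case (ii), however, you take a genuinely different path: you split $G\cong G[p^n]\times(G/G[p^n])$ via Deligne's annihilation-by-the-order theorem and the coprime idempotents $up^a,vm$, then use additivity of $R^2\varepsilon_{\mr{fl}*}$, kill the $p$-power-torsion summand by Theorem \ref{2.7} (2), and apply Theorem \ref{2.7} (3) to the prime-to-$p$ factor. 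The paper instead never splits $G$: it uses the short exact sequence $0\to G[p^n]\to G\to G/G[p^n]\to 0$, the induced exact sequence $R^2\varepsilon_{\mr{fl}*}G[p^n]\to R^2\varepsilon_{\mr{fl}*}G\to R^2\varepsilon_{\mr{fl}*}(G/G[p^n])\to R^3\varepsilon_{\mr{fl}*}G[p^n]$, and the incompatibility of torsion weights ($R^i\varepsilon_{\mr{fl}*}G[p^n]$ is $p$-power torsion, while $R^2\varepsilon_{\mr{fl}*}(G/G[p^n])$ is prime-to-$p$ torsion) to conclude $R^2\varepsilon_{\mr{fl}*}G\cong R^2\varepsilon_{\mr{fl}*}(G/G[p^n])$. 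Your splitting buys a cleaner conclusion by pure additivity (and, as a byproduct, shows the flatness hypothesis on $G[p^n]$ is essentially automatic), at the price of invoking Deligne's theorem — which the paper only uses implicitly inside the proof of Theorem \ref{2.7} (3) — and of the slightly informal appeal to ``geometric stalks'' of the big flat site when citing Theorem \ref{2.7} (3) for a possibly disconnected base; since $X$ is locally noetherian its connected components are open, so it is cleaner to restrict to them (or to note the order of $G/G[p^n]$ is already the constant $m$), which removes that soft spot.
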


At last we make some computations in Section 3. In particular, over standard Henselian log traits with finite residue field, we compute the first and the second Kummer log flat cohomology with coefficients in group schemes which satisfy any of the conditions of Theorem \ref{0.5}.

\section{The higher direct images are torsion}\label{sec1}
Let $X$ be an fs log scheme whose underlying scheme is locally noetherian. Let $G$ be a smooth commutative group scheme over the underlying scheme of $X$. In this section, we investigate the higher direct images $R^i\varepsilon_{\mr{fl}*}G$ of $G$. The main result is the following theorem.

\begin{thm}\label{1.1}
Let $X$ be an fs log scheme whose underlying scheme is locally noetherian. Let $G$ be a smooth commutative group scheme over the underlying scheme of $X$. Then the higher direct image $R^i\varepsilon_{\mr{fl}*}G$ is a torsion sheaf for any $i>0$.
\end{thm}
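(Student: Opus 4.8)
The plan is to reduce the statement to a stalk computation and to invoke the structure theory of Kummer log flat covers. Since $R^i\varepsilon_{\mr{fl}*}G$ is the sheafification (for the classical flat topology) of the presheaf $Y\mapsto H^i((\mr{fs}/Y)_{\mr{kfl}},G)$, it suffices to show that for every $Y\in(\mr{fs}/X)$ and every class $\xi\in H^i((\mr{fs}/Y)_{\mr{kfl}},G)$ with $i>0$, there is a classical flat (indeed, one may even take strict \'etale) cover $Y'\to Y$ killing a multiple of $\xi$; equivalently, that the stalks of $R^i\varepsilon_{\mr{fl}*}G$ at geometric points are torsion. So I would fix a log geometric (or strictly Henselian) base and work there.

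The key input is the comparison between the Kummer log flat topology and the classical flat topology via log blow-ups and root stacks / Kummer covers: Kato's description says that, \'etale-locally on $Y$, any kfl cover is dominated by the composite of a classical flat cover and a ``standard Kummer cover'' obtained by adjoining $n$-th roots of the monoid $M_Y$ for some $n$. Concretely, after passing to a strict \'etale neighborhood one has a chart $P\to M_Y$, and the Kummer covers are the base changes along $P\to \tfrac1n P$. This means the topos $(\mr{fs}/Y)_{\mr{kfl}}$ sits, via a Leray/Cartan–Leray spectral sequence, between the classical flat topos and the "log part," whose cohomology is governed by the cohomology of the profinite group $\widehat{\Z}(1)^{r}$ (where $r$ is the rank of $\overline M_Y$ at the point) acting through the log structure — and such continuous cohomology of $\widehat{\Z}(1)^r$ with coefficients in a divisible or smooth group is torsion in positive degrees. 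I would set up the Cartan–Leray spectral sequence
\[
E_2^{a,b}=H^a_{\mr{cont}}\bigl(\pi_1^{\log},\,H^b((\mr{fs}/Y)_{\mr{fl}},G)\bigr)\Longrightarrow H^{a+b}((\mr{fs}/Y)_{\mr{kfl}},G),
\]
observe that the $b=0$ row contributes $H^a_{\mr{cont}}(\widehat\Z(1)^r,G(Y))$, which is torsion for $a>0$ because $\widehat\Z(1)^r$ is profinite, and that the classical contribution $E_2^{0,b}=H^b((\mr{fs}/Y)_{\mr{fl}},G)$ already dies under $\varepsilon_{\mr{fl}*}$ for $b>0$ (that is the $b=0$ reduction of the $R^i\varepsilon_{\mr{fl}*}$ formalism). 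Since the $E_\infty$ terms for total degree $i>0$ are then all torsion, so is $R^i\varepsilon_{\mr{fl}*}G$.

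There is a technical wrinkle: the "profinite group" $\pi_1^{\log}$ is not literally a group but a pro-system of Kummer covers, so one does not get a single Cartan–Leray spectral sequence but rather has to pass to the limit over the standard Kummer covers $Y_n\to Y$ and over log blow-ups. The cleanest route is: (1) by a log-blow-up argument reduce to the case where $\overline M_Y$ is (\'etale-locally) constant free of rank $r$; (2) use that $R^i\varepsilon_{\mr{fl}*}G$ is computed as $\varinjlim_n R^i(\text{strict part})_*(\text{Kummer part})_*G$, with the Kummer part contributing, in the limit, the Koszul-type complex $\bigwedge^\bullet(\Gml/\Gm)\otimes(\text{something torsion})$; (3) conclude that every local section of $R^i\varepsilon_{\mr{fl}*}G$, $i>0$, is annihilated by some integer $n$ — explicitly by the $n$ for which a standard $n$-Kummer cover splits the relevant torsor. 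Granting the already-cited results of \cite{zha5} on the structure of $\varepsilon_{\mr{fl}}$ and its higher direct images, most of this is bookkeeping.

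The main obstacle I anticipate is step (2) — making precise, in families and compatibly with log blow-ups, that the "Kummer direction" of the cohomology is torsion in positive degrees, i.e. that the relative cohomology of the pro-Kummer cover has no non-torsion part. This amounts to controlling $R^i(\text{Kummer part})_*G$ uniformly; the divisibility/smoothness of $G$ enters here precisely to guarantee that the relevant $\widehat\Z(1)^r$-cohomology groups $H^a(\widehat\Z(1)^r,G)$ are torsion for $a>0$ (for $a=1$ this is $G\otimes\widehat\Z(1)$ modulo divisibility, for $a=2$ it is $\wedge^2$, etc.), and one must check that the passage from the individual finite Kummer covers to this limit does not create a non-torsion class. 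Everything else — the reduction to stalks, the reduction to constant $\overline M$, and the vanishing of the classical contribution — is standard once the site-theoretic setup of \cite{zha5} is in place.
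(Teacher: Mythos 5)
There is a genuine gap, and it sits exactly where you flag "the main obstacle": the torsionness of the Kummer direction in positive degrees. Your mechanism for it --- that the relevant cohomology is continuous cohomology of the profinite group $\widehat{\Z}(1)^r$, hence torsion in positive degrees --- is only valid for the prime-to-$p$ (Kummer log \'etale) part. In residue characteristic $p>0$ the standard Kummer covers $X_{p^r}\to X$ are flat but not \'etale: the "deck transformation" object is the diagonalizable group scheme $H_{p^r}=\Spec\Z[(P^{1/p^r})^{\mr{gp}}/P^{\mr{gp}}]$, whose $p$-part is infinitesimal over the separably closed residue field, with no rational points at all. So there is no Cartan--Leray spectral sequence over a profinite $\pi_1^{\log}$ computing kfl cohomology from flat cohomology, and the heuristic "$H^a$ of a profinite group is torsion" simply does not apply to the part of the problem that is actually hard. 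This is not bookkeeping: it is the main new content of the theorem.

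The paper closes this gap as follows. After reducing to a strictly Henselian base and to the tower $X_n$ (as you do), it identifies $\check{H}^i_{\mr{kfl}}(X_n/X,G)$ with the \emph{Hochschild} cohomology $H^i_{x_{\mr{fl}}}(H_n,G)$ of the group scheme $H_n$ acting trivially on $G$ (via Kato, after first passing from $X$ to the log point $x$). Writing $H_n\cong H_m\times H_{p^r}$ with $(m,p)=1$, the prime-to-$p$ factor is handled by a norm/corestriction argument (multiplication by $m$ factors through restriction to $H_{p^r}$), and the $p$-part is killed by a power of $p$ using the smoothness of $G$ in an essentially different way than you suggest: since the coordinate rings of the powers $H_{p^r}^i$ are local artinian with nilpotent maximal ideal, multiplication by $p^t$ on $G(A)$ factors through $G(k)$ (a formal-group/Katz lemma), so multiplication by $p^{iap^r}$ on the standard Hochschild complex factors through the complex of the trivial group, whose higher cohomology vanishes. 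Smoothness enters through this nilpotence argument, not through divisibility. Finally, note that the paper also needs an induction on $i$ combined with the Leray spectral sequence and Nizio\l's refinement result (every kfl cover is dominated by a standard Kummer cover followed by a classical flat cover) to control the terms $\check{H}^a(\mathscr{X}_{\N},\underline{H}^b_{\mr{kfl}}(G))$ with $b>0$ in the \v{C}ech-to-derived spectral sequence; your sketch collapses this step as well. Without an argument replacing the profinite heuristic for the infinitesimal $p$-part, the proposal does not prove the theorem.
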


Apparently the torsionness of $R^i\varepsilon_{\mr{fl}*}G$ is reduced to the torsionness of $H^i_{\mr{kfl}}(X,G)$ in the case that the underlying scheme of $X$ is $\Spec R$ with $R$ a strictly henselian local ring. We are going to investigate the torsionness of $H^i_{\mr{kfl}}(X,G)$ in this case using \v{C}ech cohomology.

Now let the underlying scheme of $X$ be $\Spec R$ with $R$ a strictly henselian local ring. Let $x$ denote the closed point, let $k$ be the residue field of $R$, and let $P\to M_X$ be a chart of the log structure of $X$ with $P$ an fs monoid, such that $P\xrightarrow{\cong}M_{X,x}/\mc{O}_{X,x}^{\times}$.

Let $P^{1/n}$ denote the monoid $P$ regarded as a monoid above $P$ via the homomorphism $P\xrightarrow{n} P$. Let $X_n:=X\times_{\Spec\Z[P]}\Spec\Z[P^{1/n}]$ endowed with the canonical log structure associated to $P^{1/n}$ and let $H_n$ denote the group scheme $\Spec \Z[(P^{1/n})^{\mr{gp}}/P^{\mr{gp}}]$ over $\Spec\Z$, then $X_n$ is a Kummer log flat cover of $X$ such that $X_n\times_XX_n\cong X_n\times_{\Spec\Z}H_n$. We regard $x$ as a log point with respect to the induced log structure. Let $x_n:=X_n\times_Xx$. Then $x_n$ is a Kummer log flat cover of $x$ such that $x_n\times_xx_n\cong x_n\times_{\Spec\Z}H_n$. 

The following lemma, which is taken from \cite{zha5}, identifies the \v{C}ech cohomology group $\check{H}_{\mr{kfl}}^i(X_n/X,G)$ with $\check{H}_{\mr{kfl}}^i(x_n/x,G)$ for any $i>0$.

\begin{lem}\label{1.2}
Let $X$ be an fs log scheme such that its underlying scheme is $\Spec R$ with $R$ a strictly henselian noetherian local ring. Let $k$ be the residue field of $R$, $x$ the closed point of $X$, and $P_X\to M_X$ a chart of the log structure $M_X$ of $X$ with $P$ an fs monoid (here $P_X$ denotes the constant sheaf associated to $P$), such that $P\xrightarrow{\cong}M_{X,x}/\mc{O}_{X,x}^{\times}$. Let $P^{1/n}$, $X_n$, and $x_n$ be as defined above. Let $G$ be a smooth commutative group scheme over the underlying scheme of $X$, and we endow $G$ with the induced log structure from $X$. Then the canonical map 
$$\check{H}_{\mr{kfl}}^i(X_n/X,G)\rightarrow \check{H}_{\mr{kfl}}^i(x_n/x,G)$$
is an isomorphism for any $i>0$.
\end{lem}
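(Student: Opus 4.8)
The plan is to compare the two \v{C}ech complexes term by term. For each $i\geq 0$ one has
$$\check{C}^i_{\mr{kfl}}(X_n/X,G)=G(X_n\times_X\cdots\times_XX_n)=G(X_n\times_{\Spec\Z}H_n^{i})=G\big(\Spec R[(P^{1/n})^{\mr{gp}}]\otimes_{\Z}\Z[H_n^i]\big),$$
using the isomorphism $X_n\times_XX_n\cong X_n\times_{\Spec\Z}H_n$ and induction, and similarly with $R$ replaced by $k$ for $x_n$. So it suffices to understand how $G$ behaves under the closed immersion $x_n\times_{\Spec\Z}H_n^i\hookrightarrow X_n\times_{\Spec\Z}H_n^i$, which is defined by the nilpotent ideal of $R$ modulo its maximal ideal--more precisely, by pulling back along $\Spec k\to\Spec R$. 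The first step is therefore to record that $R$ is strictly henselian, hence $R[(P^{1/n})^{\mr{gp}}]$ and its base changes by the finite flat $\Z$-algebras $\Z[H_n^i]$ are again (finite products of) henselian local rings, or at least that the relevant schemes are local henselian schemes with residue field a finite separable extension situation controlled by $k$.

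Second, I would invoke smoothness of $G$: for a smooth scheme $G$ over the base and a henselian pair, or more simply for the closed immersion cut out by a nilpotent ideal after suitable reduction, the restriction map on $G$-points is surjective by formal smoothness, and one identifies its kernel. Concretely, over an affine henselian local ring $A$ with residue field $\kappa$, the map $G(A)\to G(\kappa)$ need not be injective, but the point is that the \emph{difference} between the $X_n$-side and the $x_n$-side is a smooth group scheme supported on the nilpotent (or more precisely: the part of $\Spec R$ away from $x$), and such a sheaf has trivial higher \v{C}ech cohomology for the covers in question. The cleanest route, and the one I expect the paper takes, is to filter: let $K$ be the kernel of the map of \v{C}ech complexes $\check{C}^\bullet_{\mr{kfl}}(X_n/X,G)\to\check{C}^\bullet_{\mr{kfl}}(x_n/x,G)$; one shows $K$ is the \v{C}ech complex of the sheaf $\ker(G\to i_*i^*G)$ where $i\colon x\hookrightarrow X$, and that this kernel sheaf, being (filtered by) vector groups / smooth connected unipotent-type pieces over the Artinian thickenings, has vanishing reduced \v{C}ech cohomology $\check{H}^i(X_n/X,-)=0$ for $i>0$ because the transition maps in the cover split compatibly. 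Then the long exact sequence gives the isomorphism on $\check{H}^i$ for $i>0$ (and in degree $0$ the map is the obvious restriction, which need not be iso, consistent with the statement).

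The main obstacle is the second step: controlling $\ker(G\to i_*i^*G)$ and proving its higher \v{C}ech cohomology vanishes. This requires that $G$ be smooth so that this kernel is built out of additive groups $\Ga$ (via the exponential/filtration by powers of the maximal ideal and the cotangent space), together with the fact that $\Ga$ over the strictly henselian local base has no higher \v{C}ech cohomology for the Kummer covers $X_n\to X$ — which in turn rests on the normal-basis-type splitting of $R[(P^{1/n})^{\mr{gp}}]$ over $R[P^{\mr{gp}}]$ as a comodule, so that $\check{C}^\bullet$ computes group cohomology of the diagonalizable group $H_n$ with coefficients in a $\Q$- or at worst projective module, which vanishes in positive degrees. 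Since this is precisely the kind of computation carried out in \cite{zha5}, and the lemma is quoted from there, I would cite that reference for the vanishing and assemble the long exact sequence argument above.
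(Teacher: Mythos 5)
The paper does not actually reprove this lemma --- its ``proof'' is the citation \cite[Lem.~3.12]{zha5} --- so your sketch has to stand on its own, and it does not: the decisive step is missing. Your reduction is fine up to a point: the $(i+1)$-fold fibre product is $X_n\times_{\Spec\Z}H_n^{i}$ (its coordinate ring is $A_i=R\otimes_{\Z[P]}\Z[P^{1/n}]\otimes_{\Z}\Z[Q]^{\otimes i}$ with $Q=(P^{1/n})^{\mr{gp}}/P^{\mr{gp}}$; your $R[(P^{1/n})^{\mr{gp}}]$ is not this ring and is not even finite over $R$), it is finite over the henselian $R$, hence a product of henselian local rings, smoothness of $G$ gives termwise surjectivity onto the complex for $x_n/x$, and everything then hinges on acyclicity in positive degrees of the kernel complex $K^{\bullet}$, $K^{i}=\ker\bigl(G(A_i)\to G(A_i\otimes_Rk)\bigr)$. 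But your justification of that acyclicity repeatedly treats the closed fibre as cut out by a nilpotent ideal (``Artinian thickenings'', ``filtration by powers of the maximal ideal'', ``exponential''). The relevant ideal is $\mathfrak{m}_RA_i$, and $R$ is only noetherian strictly henselian, not artinian or complete: the $\mathfrak{m}_R$-adic filtration of $K^{i}$ is infinite, its graded pieces are indeed vector groups, but $K^{i}$ is not complete for it (already for $G=\Ga$ the limit is the $\mathfrak{m}_R$-adic completion, and $A_i$ does not surject onto it), so the filtration argument only computes the cohomology of the completed kernel complex and says nothing about $K^{\bullet}$ itself. This artinian/complete-to-henselian passage is precisely what the lemma is for --- the artinian case is handled separately by Lemma \ref{1.3} (Kato) --- so the sketch assumes away the main difficulty.

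Two further claims are off. The assertion that the \v{C}ech complex computes cohomology of the diagonalizable group $H_n$ ``with coefficients in a $\Q$- or at worst projective module, which vanishes in positive degrees'' cannot be right as stated: the full complex computes $\check{H}^i_{\mr{kfl}}(X_n/X,G)$, which is exactly the nonvanishing object feeding into $R^i\varepsilon_{\mr{fl}*}G$ (already $\check{H}^1_{\mr{kfl}}(X_n/X,\Gm)\neq0$ by Kato's theorem); and if you mean only the kernel subcomplex, its coefficients are congruence subgroups of $G$-points, which are neither quasi-coherent modules nor uniquely divisible when $n$ is divisible by the residue characteristic --- the Kummer log \emph{flat} case that the lemma must cover --- so vanishing results for multiplicative-type groups with linear coefficients do not apply directly, and ``the transition maps in the cover split compatibly'' is not an argument. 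Finally, outsourcing ``the vanishing'' to \cite{zha5} is circular in this exercise, since that reference is where the very lemma under discussion is proved; as an independent argument the proposal is therefore incomplete at its crucial point.
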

\begin{proof}
See \cite[Lem. 3.12]{zha5}.
\end{proof}

Before going to the next lemma, we recall the standard complex  for computing the Hochschild cohomology from \cite[Expos\'e I, \S 5.1]{sga3-1}. Let $H$ be a group scheme over a base scheme $S$, and let $F$ be a functor of abelian groups on the flat site of $S$ endowed with an $H$-action. The standard complex 
\begin{equation}\label{eq1.1}
C^{\bullet}_S(H,F):C^{0}_S(H,F)\xrightarrow{d^0}C^{1}_S(H,F)\xrightarrow{d^1}C^{2}_S(H,F)\xrightarrow{d^2}\cdots
\end{equation}
for computing the Hochschild cohomology $H^{\bullet}_{S_{\mr{fl}}}(H,F)$ is given by:
$$C^{i}_S(H,F):=\mr{Mor}_S(H^i,F),$$
and the differential 
$$d^i:C^{i}_S(H,F)\to C^{i+1}_S(H,F)$$
is given by the alternating sum $\partial_0-\partial_1+\cdots+(-1)^{i+1}\partial_{i+1}$, where 
$$\partial_j:C^{i}_S(H,F)\to C^{i+1}_S(H,F)$$
sends $c\in C^{i}_S(H,F)$ to $\partial_j(c)$ defined by
$$\partial_j(c)(h_1,\cdots,h_{i+1})=\begin{cases}h_1\cdot c(h_2,\cdots,h_{i+1}),&\text{if $j=0$;}\\
c(h_1,\cdots,h_j\cdot h_{j+1},\cdots,h_{i+1}),&\text{if $1\leq j\leq i$;}\\
c(h_1,\cdots,h_{i}),&\text{if $j=i+1$.}\end{cases}$$

The following lemma, which is implicitly proved in \cite[\S 4.11, \S 4.12]{kat2}, can be used to describe the \v{C}ech cohomology group $\check{H}_{\mr{kfl}}^i(x_n/x,G)$ in terms of the Hochschild cohomology group $H^i_{x_{\mr{fl}}}(H_n,G)$ for $i\geq1$, where the action of $H_n$ on $G$ is the trivial one.

\begin{lem}\label{1.3}
Let $X$, $R$, $P$, $P^{1/n}$, $X_n$, and $G$ be as in Lemma \ref{1.2}. We further assume that $R$ is artinian. Let $F$ be the sheaf of abelian groups on $(\mr{fs}/X)_{\mr{kfl}}$ defined by $T\mapsto \Gamma(T\times_XX_n,G)$. Then we have the following.
\begin{enumerate}
\item  The group scheme $H_n=\Spec\Z[(P^{1/n})^{\mr{gp}}/P^{\mr{gp}}]$ acts on $F$.
\item For any $i\geq0$, the \v{C}ech cohomology group $\check{H}_{\mr{kfl}}^i(X_n/X,G)$ is canonically identified with the Hochschild cohomology $H^i_{X_{\mr{fl}}}(H_n,F)$ of $H_n$ with coefficients in $F$ on the flat site $X_{\mr{fl}}=(\mr{fs}/X)_{\mr{fl}}$ (see \cite[Expos\'e I, \S 5.1]{sga3-1} for the definition of Hochschild cohomology).
\item We endow $G$ with the trivial $H_n$-action. Then the canonical homomorphism $G\to F$ induced by the projection $T\times_XX_n\to T$ is $H_n$-equivariant.
\item The homomorphism $G\to F$ induces an isomorphism 
$$H^i_{X_{\mr{fl}}}(H_n,G)\xrightarrow{\cong}H^i_{X_{\mr{fl}}}(H_n,F)$$
for each $i\geq1$.
\end{enumerate}
\end{lem}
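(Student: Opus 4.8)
The strategy is to exploit the isomorphism $X_n\times_XX_n\cong X_n\times_{\Spec\Z}H_n$ (which exhibits $X_n$ as a Kummer log flat $H_n$-torsor over $X$) for parts (1)--(3), and a structural analysis of the Weil restriction of $G$ along $\underline{X_n}\to\underline X$ for part (4). Parts (1) and (3) are formal. The composite $X_n\times_{\Spec\Z}H_n\xrightarrow{\ \sim\ }X_n\times_XX_n\xrightarrow{\ \mr{pr}_2\ }X_n$ is an action of $H_n$ on $X_n$ over $X$ (the group axioms follow from the torsor isomorphism as in the usual descent formalism); applying the functor $T\mapsto\Gamma(T\times_XX_n,G)$ turns this action into a coaction on $F$, proving (1). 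The canonical map $G\to F$ is restriction of sections along $X_n\to X$, and any section coming from the base satisfies $\mr{pr}_1^{*}=\mr{pr}_2^{*}$ after pullback to $X_n\times_XX_n$; transported through the torsor isomorphism, this is exactly the assertion that $G\to F$ is $H_n$-equivariant for the trivial $H_n$-action on $G$, which is (3).

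For part (2), I would compare the \v{C}ech complex $\check{C}^{\bullet}(X_n/X,G)$ of the covering $X_n\to X$ with the standard complex $C^{\bullet}_X(H_n,F)$ of \eqref{eq1.1} term by term, then match the differentials. Iterating the torsor isomorphism gives, compatibly with the face maps, an isomorphism between the $(p+1)$-fold fibre product $X_n\times_X\cdots\times_XX_n$ and $X_n\times_{\Spec\Z}H_n^{p}$, whence $\check{C}^{p}(X_n/X,G)=\Gamma(X_n\times_X\cdots\times_XX_n,G)\cong\Gamma(X_n\times_{\Spec\Z}H_n^{p},G)$. Now $\Gamma(X_n\times_{\Spec\Z}H_n^{p},G)=F((H_n^{p})_X)=C^{p}_X(H_n,F)$, where $(H_n^{p})_X:=X\times_{\Spec\Z}H_n^{p}$ carries the log structure pulled back from $X$; this object is strict over $X$, so the fs fibre product $(H_n^{p})_X\times_XX_n$ has underlying scheme $\underline{X_n}\times_{\Spec\Z}H_n^{p}$, which makes the last two equalities transparent. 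Under this identification the $p+2$ \v{C}ech face maps become the operators $\partial_0,\dots,\partial_{p+1}$ built from multiplication in $H_n$ and the $H_n$-action on $F$ from (1) --- this is exactly the computation of \cite[Expos\'e I, \S5.1]{sga3-1} for the bar resolution of a torsor. Passing to cohomology gives $\check{H}^{i}_{\mr{kfl}}(X_n/X,G)\cong H^{i}_{\Xfl}(H_n,F)$ for all $i\geq0$.

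Part (4) is the crux. Since the objects $(H_n^{p})_X$ are strict over $X$, the Hochschild complexes of $G$ and of $F$ only involve these sheaves evaluated on fs log schemes strict over $X$, and there $F$ is the Weil restriction $W:=\mr{Res}_{\underline{X_n}/\underline X}(G_{\underline{X_n}})$ along the finite locally free morphism $\pi\colon\underline{X_n}\to\underline X$, equipped with the $H_n$-action induced by $H_n\curvearrowright\underline{X_n}$; the canonical map $G\to F$ is the monomorphism $G\hookrightarrow W$ induced by $\mc O_{\underline X}\to\pi_{*}\mc O_{\underline{X_n}}$ (a monomorphism because $\pi$ is faithfully flat). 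Let $0\to G\to W\to Q\to 0$ be the resulting short exact sequence of $H_n$-equivariant abelian fppf sheaves over $\underline X$. It suffices to show $H^{i}_{\Xfl}(H_n,Q)=0$ for \emph{all} $i\geq0$: then the long exact sequence of Hochschild cohomology forces $H^{i}_{\Xfl}(H_n,G)\xrightarrow{\ \sim\ }H^{i}_{\Xfl}(H_n,F)$ for every $i\geq1$, which is (4). To prove the vanishing, recall that $H_n=\Spec\Z[(P^{1/n})^{\mr{gp}}/P^{\mr{gp}}]$ is diagonalizable, hence linearly reductive over any base, and that $\pi_{*}\mc O_{\underline{X_n}}$ is graded by $(P^{1/n})^{\mr{gp}}/P^{\mr{gp}}$ with weight-$0$ piece equal to $\mc O_{\underline X}$ (here one uses that $P$ is saturated). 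Since $G$ is smooth and $R$ is artinian, the positive-weight part of $\pi_{*}\mc O_{\underline{X_n}}$ generates a nilpotent ideal, so deformation theory of the smooth $G$ along the induced $H_n$-stable square-zero filtration presents $Q$ as a successive extension of vector groups of the form $\mr{Lie}(G)\otimes_{\mc O_{\underline X}}(\text{graded pieces of }\pi_{*}\mc O_{\underline{X_n}})$, all concentrated in \emph{nonzero} $H_n$-weights. For such a quasi-coherent $H_n$-module $V$, linear reductivity gives $H^{i}_{\Xfl}(H_n,V)=0$ for $i>0$, while $H^{0}_{\Xfl}(H_n,V)=V^{H_n}$ is the weight-$0$ part of $V$, which vanishes. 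Hence $H^{\bullet}_{\Xfl}(H_n,Q)=0$.

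The hard part is this last step --- constructing the $H_n$-equivariant filtration of $W$ with vector-group graded pieces in nonzero weights. Because $\pi$ need not admit a section, $W$ does not literally split as $G$ times a unipotent group; one is forced to work with the weight decomposition of $\pi_{*}\mc O_{\underline{X_n}}$ (which supplies $\mc O_{\underline X}$ in weight $0$ even though the projection onto it is not a ring retraction) together with the square-zero filtration coming from the nilpotent ideal generated by the positive-weight part, invoking smoothness of $G$ at each stage; it is precisely here that the hypothesis that $R$ be artinian is used. This is the content of \cite[\S4.11, \S4.12]{kat2}. By contrast, the verification in part (2) that the $p+2$ \v{C}ech face maps agree with the Hochschild operators $\partial_{j}$ is routine bookkeeping, provided one fixes once and for all which of the two projections $X_n\times_XX_n\to X_n$ defines the $H_n$-action.
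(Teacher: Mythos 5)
Your proposal is correct and follows essentially the same route as the paper, whose entire proof is the citation of Kato's \S 4.11--4.12: the torsor isomorphism $X_n\times_XX_n\cong X_n\times_{\Spec\Z}H_n$ identifies the \v{C}ech complex of $X_n/X$ with the standard Hochschild complex of $H_n$ acting on $F$, and part (4) is Kato's d\'evissage of the Weil restriction along the nilpotent (by artinianness of $R$ and sharpness/saturatedness of $P$) graded ideal, the graded pieces being quasi-coherent $H_n$-modules of purely nonzero weight, which are killed by the vanishing of Hochschild cohomology of diagonalizable groups on quasi-coherent sheaves. The only point to spell out fully is that the ideal generated by the nonzero-weight part of $\pi_*\mc{O}_{\underline{X_n}}$ meets the weight-$0$ subring $\mc{O}_{\underline{X}}$ nontrivially, so the nonzero-weight graded pieces of $Q=F/G$ are obtained by comparing the induced congruence filtrations of $G$ and of the Weil restriction (e.g.\ via a snake-lemma argument on the two towers of square-zero extensions), which is exactly the subtlety you flag when noting the absence of a ring retraction.
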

\begin{proof}
See \cite[\S 4.11, \S 4.12]{kat2}.
\end{proof}

\begin{cor}\label{1.4}
Let $X$, $X_n$, $x$, and $G$ be as in Lemma \ref{1.2}. Then the \v{C}ech cohomology group $\check{H}_{\mr{kfl}}^i(X_n/X,G)$ is canonically identified with the Hochschild cohomology group $H^i_{x_{\mr{fl}}}(H_n,G)$ for any $i\geq1$, where $G$ is regarded as an $H_n$-module with respect to the trivial action.
\end{cor}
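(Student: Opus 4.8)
The plan is to chain together Lemma \ref{1.2} and Lemma \ref{1.3}, but the two lemmas are not immediately composable: Lemma \ref{1.3} carries the hypothesis that $R$ is artinian, whereas Corollary \ref{1.4} only inherits the standing assumption that $R$ is a strictly henselian noetherian local ring. So the first step is to reduce to the artinian case by passing from $X$ to the log point $x$. Concretely, Lemma \ref{1.2} gives a canonical isomorphism $\check{H}_{\mr{kfl}}^i(X_n/X,G)\xrightarrow{\cong}\check{H}_{\mr{kfl}}^i(x_n/x,G)$ for all $i>0$, so it suffices to identify the right-hand side with $H^i_{x_{\mr{fl}}}(H_n,G)$. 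Here $x=\Spec k$ with its induced log structure; since $k$ is a field, the ring $k$ is in particular artinian, and $k$ is strictly henselian because $R$ is.

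Second, I would apply Lemma \ref{1.3} with $X$ replaced by $x$. The data needed by that lemma is in place: the chart $P\to M_X$ restricts to a chart $P\to M_x$ with $P\xrightarrow{\cong}M_{x,x}/\mc{O}_{x,x}^\times$ (this is exactly the isomorphism already assumed, since the monoid of the induced log structure at the closed point is unchanged), and $x_n=X_n\times_Xx$ plays the role of ``$X_n$'' over $x$, with $x_n\times_xx_n\cong x_n\times_{\Spec\Z}H_n$ and the same group scheme $H_n=\Spec\Z[(P^{1/n})^{\mr{gp}}/P^{\mr{gp}}]$. Parts (1)--(2) of Lemma \ref{1.3} then give a canonical identification $\check{H}_{\mr{kfl}}^i(x_n/x,G)\cong H^i_{x_{\mr{fl}}}(H_n,F)$, where $F$ is the sheaf $T\mapsto\Gamma(T\times_xx_n,G)$ on $(\mr{fs}/x)_{\mr{kfl}}$ equipped with its natural $H_n$-action, and parts (3)--(4) give the isomorphism $H^i_{x_{\mr{fl}}}(H_n,G)\xrightarrow{\cong}H^i_{x_{\mr{fl}}}(H_n,F)$ for $i\geq1$, with $G$ carrying the trivial $H_n$-action. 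Composing, $\check{H}_{\mr{kfl}}^i(x_n/x,G)\cong H^i_{x_{\mr{fl}}}(H_n,G)$ for $i\geq1$.

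Stringing the two displays together yields, for every $i\geq1$, a canonical isomorphism $\check{H}_{\mr{kfl}}^i(X_n/X,G)\cong H^i_{x_{\mr{fl}}}(H_n,G)$, which is the assertion. The only point requiring a little care—and the one I would state explicitly—is that $G$ appearing on the $x$-side is the base change $G\times_Xx$ with its induced log structure, and that all the identifications are natural enough to be compatible; but since the statement of the corollary already refers to $G$ over $x$ implicitly through the notation $H^i_{x_{\mr{fl}}}(H_n,G)$, this is a matter of bookkeeping rather than a genuine obstacle. No essentially new argument is needed beyond invoking the two preceding lemmas in the correct order after the reduction to the closed point.
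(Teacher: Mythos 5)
Your proof is correct and follows exactly the paper's argument: apply Lemma \ref{1.2} to reduce to the log point $x$, observe that $x$ satisfies the artinian hypothesis of Lemma \ref{1.3}, and then compose the identifications from parts (1)--(4) of that lemma. The extra care you take in checking the chart and the base-change bookkeeping is sound but not a departure from the paper's route.
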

\begin{proof}
By Lemma \ref{1.2}, we have a canonical isomorphism 
$$\check{H}_{\mr{kfl}}^i(X_n/X,G)\xrightarrow{\cong} \check{H}_{\mr{kfl}}^i(x_n/x,G).$$
Since the log point $x$ clearly satisfies the extra condition of Lemma \ref{1.3}, we get a canonical isomorphism
$$\check{H}_{\mr{kfl}}^i(x_n/x,G)\xrightarrow{\cong}H^i_{x_{\mr{fl}}}(H_n,G).$$
Then the result follows.
\end{proof}
 
Now we turn to the study of $H^i_{x_{\mr{fl}}}(H_n,G)$.

\begin{lem}\label{1.5}
Let $k$, $G$, and $x$ be as in Lemma \ref{1.2}. Let $p$ be the characteristic of the field $k$, and let $n=m\cdot p^r$ with $(m,p)=1$. By the definition of Hochschild cohomology, see \cite[Expos\'e I, \S 5.1]{sga3-1}, the inclusion $H_{p^r}\to H_{n}$ induces a canonical map
$$\mr{Res}:H^i_{x_{\mr{fl}}}(H_n,G)\to H^i_{x_{\mr{fl}}}(H_{p^r},G),$$
which we call the restriction map. Then the multiplication by $m$ map on $H^i_{x_{\mr{fl}}}(H_n,G)$ factors through the restriction map $\mr{Res}$.
\end{lem}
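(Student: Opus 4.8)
The plan is to use the decomposition $H_n \cong H_m \times_{\Spec\Z} H_{p^r}$ of the group scheme coming from the coprime factorization $n = m \cdot p^r$, together with the behaviour of Hochschild cohomology under products and under restriction to a subgroup. The key is that $H_n$ is the diagonalizable group scheme attached to the finite abelian group $(P^{1/n})^{\mathrm{gp}}/P^{\mathrm{gp}}$, and this finite abelian group splits canonically as a direct sum of its $m$-part and its $p^r$-part; dually this gives $H_n \cong H_m \times_{\Spec\Z} H_{p^r}$, so that $H_{p^r} \hookrightarrow H_n$ as the subgroup $\{e\}\times H_{p^r}$, with complementary subgroup $H_m \times \{e\}$ and a projection $H_n \twoheadrightarrow H_m$.

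First I would record that, because $G$ carries the trivial $H_n$-action, the standard complex $C^\bullet_x(H_n, G)$ of \eqref{eq1.1} computing $H^i_{x_{\mathrm{fl}}}(H_n, G)$ has terms $\mathrm{Mor}_x(H_n^i, G)$ with differentials built only from multiplication maps on the $H_n$ factors (the $j=0$ and $j=i+1$ faces become plain projections since the action is trivial). This is exactly the complex computing group cohomology of an abstract group, transported to the scheme setting, so the usual homological-algebra inputs apply functorially. The inclusion $\iota : H_{p^r}\to H_n$ induces the restriction $\mathrm{Res}$ by pulling back cochains along $\iota^i : H_{p^r}^i \to H_n^i$, and the projection $\pi : H_n \to H_m$ composed with... — more precisely, I would produce a \emph{corestriction}-type section. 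The clean way: the splitting $H_n \cong H_m \times H_{p^r}$ gives a retraction $s : H_n \to H_{p^r}$ (projection onto the second factor) which is a group homomorphism splitting $\iota$, i.e. $s\circ\iota = \mathrm{id}_{H_{p^r}}$. Hence on cohomology $\mathrm{Res}\circ s^* = \mathrm{id}$ on $H^i_{x_{\mathrm{fl}}}(H_{p^r},G)$, where $s^* : H^i_{x_{\mathrm{fl}}}(H_{p^r},G)\to H^i_{x_{\mathrm{fl}}}(H_n,G)$ is pullback along $s$. That alone shows $\mathrm{Res}$ is surjective, but to get the stated factorization of multiplication by $m$ I instead want a map the other way whose composite with $\mathrm{Res}$ is $m$.

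The main step, and the place I expect the real work, is constructing a transfer (corestriction) map $\mathrm{Cor} : H^i_{x_{\mathrm{fl}}}(H_{p^r},G)\to H^i_{x_{\mathrm{fl}}}(H_n,G)$ for the finite locally free subgroup $H_{p^r}\subset H_n$ of index $m$ (the order of $H_m$, which is prime to $p$), and checking that $\mathrm{Res}\circ\mathrm{Cor}$ is multiplication by $m$. In the abstract group-cohomology setting this is standard (sum over coset representatives), and here it works because $H_n \to H_m$ is finite locally free with fibres of rank $m$: one averages a cochain over the "fibre" $H_m$ using the comultiplication, exactly as in \cite[Exp.\ I]{sga3-1}'s discussion of Hochschild cohomology, and the composite $\mathrm{Res}\circ\mathrm{Cor}$ picks up the degree $m$. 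Concretely, since $H_n\cong H_m\times H_{p^r}$, a cochain $c\in C^i_x(H_{p^r},G)$ inflates to $\tilde c\in C^i_x(H_n,G)$ along the projection $H_n\to H_{p^r}$, and $m\cdot\tilde c$ is visibly in the image of $s^*$; chasing this through the complex and using that $m$ is invertible away from $p$ (so $H_m$ is étale and the averaging is literally a sum over its geometric points) yields $\mathrm{Res}(s^* \alpha)=m\cdot(\text{something})$... — the cleanest formulation is: the composite
\[
H^i_{x_{\mathrm{fl}}}(H_n,G)\xrightarrow{\ \mathrm{Res}\ }H^i_{x_{\mathrm{fl}}}(H_{p^r},G)\xrightarrow{\ s^*\ }H^i_{x_{\mathrm{fl}}}(H_n,G)
\]
equals multiplication by $m$, because $s^*\circ\mathrm{Res}$ is the map on cohomology induced by the idempotent-up-to-$m$ endomorphism $\iota\circ s$ of $H_n$, whose "effect" on the diagonalizable group $H_n$ on the $H_m$-coordinate is the zero map and on the $H_{p^r}$-coordinate is the identity — averaged, the trace over the order-$m$ piece contributes the factor $m$. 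Thus multiplication by $m$ on $H^i_{x_{\mathrm{fl}}}(H_n,G)$ factors as $s^*\circ\mathrm{Res}$, which is the assertion. The genuine obstacle is making the transfer/averaging argument rigorous in the relative scheme-theoretic (rather than abstract-group) setting; I would handle it by reducing to geometric points of the étale group $H_m$, where everything becomes the classical sum-over-cosets formula, and invoking faithfully flat descent to conclude.
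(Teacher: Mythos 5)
Your central claim --- that the composite $s^*\circ\mathrm{Res}$, induced by the endomorphism $\iota\circ s$ of $H_n$ (project to $H_{p^r}$, then include), equals multiplication by $m$ --- is false, and this is exactly where the lemma's content lies. Pullback along $\iota\circ s$ is ``inflation after restriction,'' which for a product is a projection-type operator onto the part of the cohomology inflated from $H_{p^r}$, not multiplication by the index. Already in the abstract-group model you invoke this fails: take $A=\Z/2\times\Z/3$ with trivial module $M=\Z/6$; in degree $1$ one has $H^1(A,M)=\mathrm{Hom}(\Z/2,M)\oplus\mathrm{Hom}(\Z/3,M)$, and inflation$\circ$restriction is the idempotent $(0,\mathrm{id})$, while multiplication by $m=2$ acts as $2\neq 1$ on $\mathrm{Hom}(\Z/3,M)\cong\Z/3$. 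The classical identity $\mathrm{Cor}\circ\mathrm{Res}=[\text{index}]$ requires the transfer (sum over cosets), which is a genuinely different map from the inflation $s^*$ along the splitting; your heuristic that the ``trace over the order-$m$ piece contributes the factor $m$'' has no counterpart in the cochain-level formula for $(\iota\circ s)^*$, which involves no averaging at all.

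You do gesture at the correct route --- a corestriction built by averaging over $H_m$ --- but you leave it unconstructed, and constructing it in the scheme-theoretic Hochschild setting is the actual work. The paper does this as follows: since $k$ is separably closed and $(m,p)=1$, $H_m$ is a \emph{constant} group scheme, so in degree $0$ one can define the norm $a\mapsto\sum_{g\in H_m(k)}g\cdot a$ from $M^{H_{p^r}}$ to $M^{H_n}$; one then shows $\{H^i_{x_{\mathrm{fl}}}(H_{p^r},\iota^*(-))\}_i$ is a universal $\delta$-functor on $H_n$-modules (effaceability via the identification $\iota^*(E_{H_n}(M))=E_{H_{p^r}}(\underline{\mathrm{Mor}}_k(H_m,M))$ and the vanishing of cohomology of induced modules), extends the norm uniquely to a morphism of $\delta$-functors $\mathrm{Cor}$, and concludes since $\mathrm{Cor}\circ\mathrm{Res}$ is multiplication by $m$ in degree $0$ and hence in all degrees. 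None of these steps (representing the transfer, verifying universality, or the degree-$0$ norm computation) is supplied by your appeal to ``reducing to geometric points of $H_m$ and faithfully flat descent,'' so as written the proposal does not prove the lemma.
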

\begin{proof}
Note that the analogous result for the abstract group cohomology is well-known. We are going to show that the standard proof in the setting of the abstract group cohomology also works in this special case of Hochschild cohomology here.

Firstly we construct a corestriction map 
$$\mr{Cor}:H^i_{x_{\mr{fl}}}(H_{p^r},G)\to H^i_{x_{\mr{fl}}}(H_{n},G)$$
for each $i\geq0$. Since the field $k$ is separable closed and $(p,m)=1$, we have $H_n\cong H_m\times H_{p^r}$ with $H_m$ a constant group scheme. Let $\iota$ denote the inclusion $H_{p^r}\hookrightarrow H_n$. We claim that $\{H^i_{x_{\mr{fl}}}(H_{p^r},\iota^*(-))\}_i$ is a universal $\delta$-functor on the category of $H_n$-modules. It suffices to show that for any $H_n$-module $M$, we have $H^i_{x_{\mr{fl}}}(H_{p^r},\iota^*(E_{H_n}(M)))=0$, where $E_{H_n}(M)$ is the $H_n$-module associated to $M$ defined in \cite[Expos\'e I, Def. 5.2.0]{sga3-1} which contains $M$ as an $H_n$-submodule. But this follows from \cite[Expos\'e I, Lem. 5.2.2]{sga3-1} by the reexpression 
$$\iota^*(E_{H_n}(M))=E_{H_{p^r}}(\underline{\mr{Mor}}_k(H_m,M)).$$
Now for any $H_n$-module $M$, we define the corestriction map in degree 0 to be the norm map 
$$\mr{Norm}:M^{H_{p^r}}\to M^{H_n},a\mapsto\sum_{g\in H_m(k)}g\cdot a .$$
It extends uniquely to a morphism Cor of the universal $\delta$-functor $\{H^i_{x_{\mr{fl}}}(H_{p^r},\iota^*(-))\}_i$ into the universal $\delta$-functor $\{H^i_{x_{\mr{fl}}}(H_{n},-)\}_i$. Since the composition 
$$M^{H_n}\hookrightarrow M^{H_{p^r}}\xrightarrow{\mr{Norm}} M^{H_n}$$
is the multiplication by $m$ map, so is the composition 
$$H^i_{x_{\mr{fl}}}(H_n,M)\xrightarrow{\mr{Res}} H^i_{x_{\mr{fl}}}(H_{p^r},M)\xrightarrow{\mr{Cor}}H^i_{x_{\mr{fl}}}(H_n,M).$$
Taking $M=G$ finishes the proof.
\end{proof}

Next we prove that $H^i_{x_{\mr{fl}}}(H_{p^r},G)$ is torsion for $i>0$, which would imply that $H^i_{x_{\mr{fl}}}(H_{n},G)$ is torsion for $i>0$ by Lemma \ref{1.5}.

\begin{lem}\label{1.6}
Let $A$ be a local artinian $k$-algebra with residue field $k$ such that its maximal ideal $\mathfrak{m}_A$ satisfies $\mathfrak{m}_A^t=0$ for some positive integer $t$. Let $\pi:G(A)\to G(k)$ (resp. $\tau:G(k)\to G(A)$) be the map induced by the projection $A\to k$ (resp. the inclusion $k\to A$). Then the multiplication by $p^t$ map on $G(A)$ agrees with $\tau\circ[p^t]\circ\pi$.
\end{lem}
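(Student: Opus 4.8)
The plan is to reduce to the case $t = 1$ by a filtration/dévissage on the nilpotent ideal, and then handle the length-one case by a direct infinitesimal computation using the smoothness of $G$. First I would set up the filtration $A = I_0 \supseteq I_1 = \mathfrak{m}_A \supseteq I_2 = \mathfrak{m}_A^2 \supseteq \cdots \supseteq I_t = 0$ and the corresponding tower of quotients $A_j := A/\mathfrak{m}_A^j$, so that $A_1 = k$ and $A_t = A$. For each $j$ I would look at the surjection $A_{j+1} \twoheadrightarrow A_j$ whose kernel $\mathfrak{m}_A^j/\mathfrak{m}_A^{j+1}$ is a square-zero ideal (indeed a $k$-vector space), giving an exact sequence of abelian groups
\[
0 \longrightarrow K_j \longrightarrow G(A_{j+1}) \longrightarrow G(A_j)
\]
where $K_j$ is the kernel of reduction. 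Because $G$ is smooth, $G(A_{j+1}) \to G(A_j)$ is surjective, and the kernel $K_j$ is identified with $\operatorname{Lie}(G) \otimes_k (\mathfrak{m}_A^j/\mathfrak{m}_A^{j+1})$, which is a $k$-vector space and hence $p$-torsion: multiplication by $p$ kills $K_j$. This is the standard computation of the tangent space of a smooth group scheme; I would cite it (e.g. from SGA 3 or from the deformation-theoretic description of $\operatorname{Lie}$) rather than reprove it.

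The core of the argument is then a telescoping. Let $\pi_j : G(A_{j+1}) \to G(A_j)$ be the reduction map and $\tau_j : G(A_j) \to G(A_{j+1})$ the section induced by the (ring-theoretic, $k$-algebra) splitting — here one uses that $A$ is a $k$-algebra with residue field $k$, so each inclusion $k \hookrightarrow A_j$ and more generally a compatible system of sections exists; concretely the composite $k \to A \to A_j$ realizes $G(k)$ inside every $G(A_j)$ and I want to compare a general element of $G(A)$ with its image under $\tau \circ \pi$ where $\tau, \pi$ are the maps through $G(k)$ in the statement. The key identity is: for $g \in G(A_{j+1})$, the element $g \cdot (\tau_j \pi_j(g))^{-1}$ lies in $K_j$, hence is killed by $p$. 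Multiplying by $p$: $p \cdot g = p \cdot \tau_j\pi_j(g)$ in $G(A_{j+1})$, i.e. reduction modulo one step of the filtration becomes an isomorphism after multiplying by $p$ and correcting by the section. Iterating this $t$ times down the tower $A = A_t \to A_{t-1} \to \cdots \to A_1 = k$ — at step $j$ applying multiplication by one more factor of $p$ — shows that $[p^t]$ on $G(A)$ equals the composite that first reduces all the way to $G(k)$, then lifts back via the $k$-algebra section $k \to A$, i.e. $[p^t] = \tau \circ [p^t] \circ \pi$ on $G(A)$. One has to be slightly careful that the sections at the various levels are compatible (they are, being induced by the single $k$-algebra structure map), so that the intermediate corrections assemble correctly; I would phrase the induction as: for each $j$, $[p^j]$ on $G(A_{j+1})$ factors as lift-from-$G(k)$ of $[p^j]$ composed with reduction $G(A_{j+1}) \to G(k)$.

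The main obstacle I anticipate is being precise about the section and the group-theoretic (rather than ring-theoretic) nature of the correction terms: one must check that $g \mapsto g \cdot (\tau_j \pi_j g)^{-1}$ really lands in the kernel subgroup $K_j$ and that $K_j$ is genuinely annihilated by $p$ — the latter is exactly where smoothness enters, via $K_j \cong \operatorname{Lie}(G) \otimes_k \mathfrak{m}_A^j/\mathfrak{m}_A^{j+1}$ and the fact that a $k$-vector space in characteristic $p$ is $p$-torsion. A secondary technical point is handling non-commutativity gracefully if $G$ were not assumed commutative, but since $G$ is commutative throughout the paper the manipulations $p\cdot(g h^{-1}) = (p\cdot g)(p\cdot h)^{-1}$ are unproblematic. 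Everything else is a routine induction on $t$.
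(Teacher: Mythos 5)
Your overall strategy -- filter $A$ by the powers of $\mathfrak{m}_A$, identify each one-step kernel $K_j=\ker\bigl(G(A/\mathfrak{m}_A^{j+1})\to G(A/\mathfrak{m}_A^{j})\bigr)$ with $\operatorname{Lie}(G)\otimes_k\mathfrak{m}_A^j/\mathfrak{m}_A^{j+1}$, hence killed by $p$, and telescope -- is exactly the standard unpacking of the fact the paper simply quotes: the paper's proof is one line, namely that $a-\tau(\pi(a))\in\ker\pi$ and that $\ker\pi$ is killed by $p^t$ by Katz's Lemma 1.1.1 (Serre--Tate local moduli). However, the mechanism of your telescoping contains a genuine error. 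The intermediate surjections $A_{j+1}=A/\mathfrak{m}_A^{j+1}\twoheadrightarrow A_j=A/\mathfrak{m}_A^{j}$ do \emph{not} admit $k$-algebra sections in general, so the maps $\tau_j\colon G(A_j)\to G(A_{j+1})$ you invoke do not exist: already $k[x]/(x^3)\to k[x]/(x^2)$ has no section, since a section would send $x$ to some $y=x+ax^2$ with $y^2=0$, while $y^2=x^2\neq0$ in $k[x]/(x^3)$. Only the sections $k\to A_j$ coming from the $k$-algebra structure exist (and are compatible). If you instead define $\tau_j$ as the composite $G(A_j)\to G(k)\to G(A_{j+1})$, then $\pi_j\circ\tau_j\neq\mathrm{id}$, and your ``key identity'' breaks down: $g\cdot(\tau_j\pi_j(g))^{-1}$ then lies only in $\ker\bigl(G(A_{j+1})\to G(k)\bigr)$, not in $K_j$, so it is not killed by a single factor of $p$. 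Either way, the inductive step as you describe it does not go through verbatim.

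The gap is local and repairable with the ingredients you already have, essentially in the shape of your final reformulated induction. Prove by induction on $j$ that $\ker\bigl(G(A_j)\to G(k)\bigr)$ is killed by $p^{j-1}$: if $g$ lies in this kernel at level $j+1$, its image in $G(A_j)$ is killed by $p^{j-1}$ by hypothesis, hence $p^{j-1}\cdot g\in K_j$ and $p^{j}\cdot g$ is the identity. No intermediate sections are needed; the $k$-algebra section is used only once, at the top level, exactly as in the paper: $a-\tau(\pi(a))\in\ker\bigl(G(A)\to G(k)\bigr)$, so $p^t\cdot a=\tau\bigl(p^t\cdot\pi(a)\bigr)$ (your argument in fact gives the sharper exponent $p^{t-1}$). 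Alternatively one can, as the paper does, quote Katz's Lemma 1.1.1 for the torsion bound on $\ker\pi$; the deformation-theoretic inputs you use (surjectivity of $G(A_{j+1})\to G(A_j)$ and the Lie-algebra description of $K_j$) are precisely what enter that lemma, and they do use the smoothness of $G$, which holds in the paper's context and should be recalled in the statement.
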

\begin{proof}
Consider the following commutative diagram
$$\xymatrix{
G(A)\ar[r]^{\pi}\ar[d]_{p^t} &G(k)\ar[d]^{p^t}\ar@/^1pc/[l]^{\tau}  \\
G(A)\ar[r]^{\pi} &G(k)\ar@/^1pc/[l]^{\tau}
}.$$
For any $a\in G(A)$, we have $a-\tau(\pi(a))\in\mr{Ker}(\pi)$. Hence 
$$p^t\cdot a-\tau(p^t\cdot\pi(a))=p^t\cdot (a-\tau(\pi(a)))=0$$
by \cite[Lem. 1.1.1]{katz1}.
\end{proof}

\begin{lem}\label{1.7}
Let $k$, $P$, $G$, and $x$ be as in Lemma \ref{1.2}. Let $p$ be the characteristic of the field $k$. Assume that $P^{\mr{gp}}\cong\Z^a$. Then the group $H^i_{x_{\mr{fl}}}(H_{p^r},G)$ is killed by $p^{iap^r}$ for any $i>0$.
\end{lem}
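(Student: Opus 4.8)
The plan is to compute $H^i_{x_{\mathrm{fl}}}(H_{p^r},G)$ directly from the standard Hochschild complex $C^\bullet_x(H_{p^r},G)$ of \eqref{eq1.1}, with $H_{p^r}$ acting trivially on $G$, and to annihilate cohomology classes by applying Lemma \ref{1.6} to each term. First I would unwind $H_{p^r}$: since $P^{\mathrm{gp}}\cong\Z^a$, the homomorphism $P^{\mathrm{gp}}\to(P^{1/p^r})^{\mathrm{gp}}$ is identified with multiplication by $p^r$ on $\Z^a$, so $(P^{1/p^r})^{\mathrm{gp}}/P^{\mathrm{gp}}\cong(\Z/p^r\Z)^a$ and $H_{p^r}\cong\mu_{p^r}^a$ over $\Z$. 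Pulling back to $x$ and discarding the (pulled-back) log structures — a morphism over $x$ between log schemes whose log structures come from $x$ is the same as a morphism of the underlying $k$-schemes — the degree-$i$ term of the complex becomes $C^i_x(H_{p^r},G)=\mathrm{Mor}_{\Spec k}(H_{p^r,k}^i,G)=G(A_i)$ with
$$A_i:=\mathcal{O}(H_{p^r,k}^i)=k[t_1,\dots,t_{ai}]\big/\bigl((t_1-1)^{p^r},\dots,(t_{ai}-1)^{p^r}\bigr),$$
where I used $t^{p^r}-1=(t-1)^{p^r}$ in characteristic $p$. This is a local artinian $k$-algebra with residue field $k$ whose maximal ideal $\mathfrak{m}_i=(t_1-1,\dots,t_{ai}-1)$ satisfies $\mathfrak{m}_i^{ai(p^r-1)+1}=0$; in particular $\mathfrak{m}_i^{iap^r}=0$.

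Next I would apply Lemma \ref{1.6} with $A=A_i$ and $t=iap^r$: multiplication by $p^{iap^r}$ on $C^i_x(H_{p^r},G)=G(A_i)$ coincides with $\tau_i\circ[p^{iap^r}]\circ\pi_i$, where $\pi_i\colon G(A_i)\to G(k)$ is induced by the projection $A_i\to k$ — equivalently, restriction along the identity section, which is here the unique $k$-point of $H_{p^r,k}^i$ since $\mu_{p^r,k}(k)=\{1\}$ — and $\tau_i\colon G(k)\to G(A_i)$ is induced by the structure map. Thus the image of $\tau_i$ is exactly the set of constant cochains $\gamma_i(b)\in C^i$, $b\in G(k)$, namely those factoring through $H_{p^r,k}^i\to\Spec k$. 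Because the $H_{p^r}$-action on $G$ is trivial, each of the $i+2$ face operators $\partial_j$ sends $\gamma_i(b)$ to $\gamma_{i+1}(b)$, so $d^i(\gamma_i(b))=\bigl(\sum_{j=0}^{i+1}(-1)^j\bigr)\gamma_{i+1}(b)$, which equals $\gamma_{i+1}(b)$ for $i$ odd and $0$ for $i$ even; moreover $\pi_{i+1}\circ\gamma_{i+1}=\mathrm{id}_{G(k)}$, so $\gamma_{i+1}$ is injective.

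Finally, let $c\in C^i_x(H_{p^r},G)$ be a cocycle with $i>0$. By the previous paragraph $p^{iap^r}c=\gamma_i(b)$ with $b=p^{iap^r}\pi_i(c)\in G(k)$. If $i$ is even (so $i\geq2$), then $\gamma_i(b)=d^{i-1}(\gamma_{i-1}(b))$, since the $i+1$ face operators defining $d^{i-1}$ contribute $\sum_{j=0}^{i}(-1)^j=1$ copies of $\gamma_i(b)$; hence $p^{iap^r}c$ is a coboundary. If $i$ is odd, then $0=p^{iap^r}d^i(c)=d^i(\gamma_i(b))=\gamma_{i+1}(b)$, and injectivity of $\gamma_{i+1}$ forces $b=0$, so $p^{iap^r}c=0$. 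Either way the class of $c$ in $H^i_{x_{\mathrm{fl}}}(H_{p^r},G)$ is killed by $p^{iap^r}$, which is the claim. The only step demanding real care is the first one: pinning down $A_i$ and bounding the nilpotency index of $\mathfrak{m}_i$ tightly enough that the exponent fed into Lemma \ref{1.6} is no worse than $iap^r$; the remainder is the standard sign bookkeeping in a bar-type complex, which is particularly transparent here because $\mu_{p^r,k}$ has a single $k$-rational point.
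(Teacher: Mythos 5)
Your argument is correct and is essentially the paper's own proof: both identify $C^i_x(H_{p^r},G)$ with $G$ of the local artinian coordinate ring of $H_{p^r}^i\cong\mu_{p^r}^{ai}$ over $k$, apply Lemma \ref{1.6} with the nilpotency bound $iap^r$ to show that multiplication by $p^{iap^r}$ factors through the constant cochains, and then use that the constant-cochain complex $G(k)\xrightarrow{0}G(k)\xrightarrow{\mathrm{id}}G(k)\to\cdots$ is acyclic in positive degrees. Your explicit parity bookkeeping with $\gamma_i(b)$ is just an unpacked version of the paper's factorization of $p^{iap^r}$ through that acyclic complex, so the two proofs coincide in substance.
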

\begin{proof}
Now we have no analogous corestriction map available for the inclusion $H_1\hookrightarrow H_{p^r}$. Instead we are going to show that the map 
$$p^{p^r}:C^{\bullet}_x(H_{p^r},G)\to C^{\bullet}_x(H_{p^r},G)$$
factors through the canonical restriction map $C^{\bullet}_x(H_{p^r},G)\to C^{\bullet}_x(H_{1},G)$, where $C^{\bullet}_x(H_{p^r},G)$ (resp. $C^{\bullet}_x(H_{1},G)$) is the standard complex computing the group cohomology of the trivial $H_n$-module (resp. $H_1$-module) $G$ over $x$.

The complex $C^{\bullet}_x(H_{1},G)$ for the trivial group $H_1$ is the complex 
$$G(k)\xrightarrow{0}G(k)\xrightarrow{\mr{id}}G(k)\xrightarrow{0}G(k)\xrightarrow{\mr{id}}G(k)\xrightarrow{0}\cdots,$$
so we have $H^i_{x_{\mr{fl}}}(H_1,G)=0$ for $i>0$. Consider the following diagram
\begin{equation}\label{eq1.2}
\xymatrix{
C^0_x(H_{p^r},G)\ar[r]\ar[d]^{\mr{Res}} &C^1_x(H_{p^r},G)\ar[r]\ar[d]^{\mr{Res}} &C^2_x(H_{p^r},G)\ar[r]\ar[d]^{\mr{Res}} &\cdots  \\
G(k)\ar[r]^0\ar[d]^{p^{iap^r}} &G(k)\ar@{=}[r]\ar[d]^{p^{iap^r}} &G(k)\ar[r]^0\ar[d]^{p^{iap^r}} &\cdots  \\
G(k)\ar[r]^0\ar[d]^{\tau} &G(k)\ar@{=}[r]\ar[d]^{\tau} &G(k)\ar[r]^0\ar[d]^{\tau} &\cdots  \\
C^0_x(H_{p^r},G)\ar[r] &C^1_x(H_{p^r},G)\ar[r] &C^2_x(H_{p^r},G)\ar[r] &\cdots
},
\end{equation}
where the upper vertical maps are induced by the inclusion $H_1\hookrightarrow H_{p^r}$ and the maps $\tau$ are as in Lemma \ref{1.6}.
The maximal ideal of the local artinian ring underlying $H_{p^r}^i$ is killed by raising to $iap^r$-th power. By Lemma \ref{1.6}, the composition of the maps on the column at degree $i$ is the multiplication by $p^{iap^r}$ map. The upper squares and the middle squares are clearly commutative, we claim that the lower squares are also commutative. We choose the following square
$$\xymatrixcolsep{7pc}\xymatrix{G(k)\ar[r]^{1_{G(k)}-1_{G(k)}+\cdots +(-1)^n1_{G(k)}}\ar[d]^{\tau} &G(k)\ar[d]^{\tau}  \\
\mr{Mor}_k(H_{p^r}^{n-1},G)\ar[r]^{\partial_0-\partial_1+\cdots+(-1)^n\partial_n} &\mr{Mor}_k(H_{p^r}^n,G)
}$$
to check. It suffices to check the commutativity of the square
$$\xymatrix{G(k)\ar[r]^{1_{G(k)}}\ar[d]^{\tau} &G(k)\ar[d]^{\tau}  \\
\mr{Mor}_k(H_{p^r}^{n-1},G)\ar[r]^{\partial_j} &\mr{Mor}_k(H_{p^r}^n,G)
}$$
for each $j$. For $a:\Spec k\to G$ and $0<j<n$, we have that $\partial_j(\tau(a))$ is given by the composition 
$$H_{p^r}^n\xrightarrow{\iota_j}H_{p^r}^{n-1}\xrightarrow{\pi_{n-1}}\Spec k\xrightarrow{a}G,$$
where $\iota_j$ is the map collapsing the $j$-th term and the $(j+1)$-th term via the group law of $H_{p^r}$ and $\pi_{n-1}$ is the structure map of $H_{p^r}^{n-1}$ as a $k$-scheme. Since $\pi_{n-1}\circ\iota_j$ is the structure map $\pi_n$ of $H_{p^r}^{n}$, the commutativity follows. The cases $j=0$ and $j=n$ can be shown similarly. Hence the diagram (\ref{eq1.2}) induces a factorization of $p^{iap^r}:H^i_{s_{\mr{fl}}}(H_{p^r},G)\to H^i_{s_{\mr{fl}}}(H_{p^r},G)$ through $H^i_{s_{\mr{fl}}}(H_{1},G)=0$. This finishes the proof.
\end{proof}

\begin{cor}\label{1.8}
Let $X$, $X_n$, $x$, and $G$ be as in Lemma \ref{1.2}. Then the \v{C}ech cohomology group $\check{H}_{\mr{kfl}}^i(X_n/X,G)$ is torsion for any $i\geq1$.
\end{cor}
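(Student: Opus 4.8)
The plan is to reduce the statement, via the identifications already established, to the cohomology--annihilation results of Lemmas~\ref{1.5} and~\ref{1.7}. By Corollary~\ref{1.4}, for each $i\geq1$ the \v{C}ech cohomology group $\check{H}_{\mr{kfl}}^i(X_n/X,G)$ is canonically isomorphic to the Hochschild cohomology group $H^i_{x_{\mr{fl}}}(H_n,G)$, with $G$ carrying the trivial $H_n$-action; so it is enough to show that the latter is torsion. Before invoking Lemma~\ref{1.7}, I would note that its hypothesis $P^{\mr{gp}}\cong\Z^a$ holds automatically here: since $P\xrightarrow{\cong}M_{X,x}/\mc{O}_{X,x}^{\times}$ and the right-hand side is a sharp fs monoid, $P^{\mr{gp}}$ is finitely generated and torsion-free, hence free of some finite rank $a$.

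Write $p$ for the residue characteristic of $R$ and first suppose $p>0$. Writing $n=m\cdot p^r$ with $(m,p)=1$ and $r\geq0$, Lemma~\ref{1.5} says that multiplication by $m$ on $H^i_{x_{\mr{fl}}}(H_n,G)$ factors through the restriction map $\mr{Res}:H^i_{x_{\mr{fl}}}(H_n,G)\to H^i_{x_{\mr{fl}}}(H_{p^r},G)$, while Lemma~\ref{1.7} says that the target is killed by $p^{iap^r}$. Composing the two, $H^i_{x_{\mr{fl}}}(H_n,G)$ is annihilated by the positive integer $m\,p^{iap^r}$, and hence so is $\check{H}_{\mr{kfl}}^i(X_n/X,G)$ by Corollary~\ref{1.4}; in particular it is torsion. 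When $p=0$ the two lemmas are not literally applicable, but the assertion is easier: $n$ is invertible in the separably closed field $k$, so $H_n=\Spec k[P^{\mr{gp}}/nP^{\mr{gp}}]$ is a finite constant group scheme over $x$ of order $n^{a}$, and running the restriction--corestriction argument of Lemma~\ref{1.5} with the trivial subgroup $H_1\hookrightarrow H_n$ in place of $H_{p^r}\hookrightarrow H_n$ --- using that $H^i_{x_{\mr{fl}}}(H_1,G)=0$ for $i>0$, as recorded in the proof of Lemma~\ref{1.7} --- shows that $H^i_{x_{\mr{fl}}}(H_n,G)$ is killed by $n^{a}$.

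Since essentially all of the work has been carried out in Lemmas~\ref{1.2}--\ref{1.7}, I do not anticipate any real obstacle in this final step; the only points that need a word of justification are the freeness of $P^{\mr{gp}}$ (which is what lets Lemma~\ref{1.7} apply without any extra hypothesis) and the elementary bookkeeping in the residue-characteristic-zero case.
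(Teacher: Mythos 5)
Your proof is correct and follows essentially the same route as the paper: identify $\check{H}_{\mr{kfl}}^i(X_n/X,G)$ with $H^i_{x_{\mr{fl}}}(H_n,G)$ via Corollary \ref{1.4} and then kill it using Lemma \ref{1.5} together with Lemma \ref{1.7}. The extra remarks you add (freeness of $P^{\mr{gp}}$, and the restriction--corestriction argument to the trivial subgroup when the residue characteristic is zero) are accurate and merely make explicit what the paper leaves implicit.
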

\begin{proof}
By Corollary \ref{1.4}, it suffices to show that the Hochschild cohomology group $H^i_{x_{\mr{fl}}}(H_n,G)$ is torsion for any $i\geq1$. The latter follows from Lemma \ref{1.5} and Lemma \ref{1.7}.
\end{proof}

\begin{lem}\label{1.9}
Let $X$ and $X_n$ be as in Lemma \ref{1.2}. Let $F$ be a sheaf on $(\mr{fs}/X)_{\mr{kfl}}$. Then the family $\mathscr{X}_{\N}:=\{X_n\rightarrow X\}_{n\geq1}$ of Kummer log flat covers of $X$ satisfies the condition (L3) from \cite[\S 2]{art3}, whence a \v{C}ech-to-derived functor spectral sequence
\begin{equation}\label{eq1.3}
\check{H}_{\mr{kfl}}^i(\mathscr{X}_{\N},\underline{H}_{\mr{kfl}}^j(F))\Rightarrow H^{i+j}_{\mr{kfl}}(X,F),
\end{equation}
where 
$$\check{H}_{\mr{kfl}}^i(\mathscr{X}_{\N},-):=\varinjlim_{n\geq1}\check{H}_{\mr{kfl}}^i(X_n/X,-).$$
\end{lem}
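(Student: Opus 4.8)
The plan is to verify condition (L3) of \cite[\S 2]{art3} by hand for the family $\mathscr{X}_{\N}$ and then let the general formalism of loc.\ cit.\ produce the spectral sequence (\ref{eq1.3}). Concretely, (L3) amounts to the following three points, which I would check in turn: (a) the family $\{X_n\to X\}_{n\geq1}$ is directed under refinement; (b) the fibre products occurring in the \v{C}ech nerve of each covering $X_n\to X$ exist as objects of $(\mr{fs}/X)$; and (c) the refinement morphisms can be chosen coherently, so that the associated \v{C}ech complexes form a genuine direct system of complexes. Granting this, the \v{C}ech-to-derived-functor spectral sequence attached to each single covering $X_n\to X$ passes to the filtered colimit — filtered colimits being exact and the abutment $H^{i+j}_{\mr{kfl}}(X,F)$ being independent of the covering — which yields (\ref{eq1.3}).

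For (a), I would order $\N_{\geq1}$ by divisibility, which is directed via $\mr{lcm}$; and for $m\mid n$ I would observe that the covering $X_n\to X$ factors canonically through $X_m\to X$ (induced by the natural homomorphism $P^{1/m}\to P^{1/n}$), exhibiting $X_n$ as a base change of $X_m$ and yielding a \emph{canonical} refinement morphism $X_n\to X_m$ over $X$ that involves no choice; this essentially takes care of (c) once (b) is in place. For (b), I would iterate the isomorphism $X_n\times_XX_n\cong X_n\times_{\Spec\Z}H_n$ recalled above to get $\underbrace{X_n\times_X\cdots\times_XX_n}_{r+1}\cong X_n\times_{\Spec\Z}H_n^r$, an fs log scheme whose underlying scheme is locally noetherian and hence an object of $(\mr{fs}/X)$ on which $F$ and the presheaves $\underline{H}_{\mr{kfl}}^j(F)$ are defined; thus the \v{C}ech complex $\check{C}^{\bullet}(X_n/X,F)$ is simply $F$ evaluated on the cosimplicial object $X_n\times_{\Spec\Z}H_n^{\bullet}$.

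Finally, the canonical maps $H_n\twoheadrightarrow H_m$ (induced by the natural inclusion $(P^{1/m})^{\mr{gp}}/P^{\mr{gp}}\hookrightarrow(P^{1/n})^{\mr{gp}}/P^{\mr{gp}}$) and $X_n\to X_m$ for $m\mid n$ are compatible with these descriptions, so they induce morphisms of cosimplicial objects $X_n\times_{\Spec\Z}H_n^{\bullet}\to X_m\times_{\Spec\Z}H_m^{\bullet}$ and hence strictly functorial maps of complexes $\check{C}^{\bullet}(X_m/X,F)\to\check{C}^{\bullet}(X_n/X,F)$, which is precisely the coherence in (c). With (L3) verified, \cite[\S 2]{art3} furnishes (\ref{eq1.3}); equivalently, one passes to the filtered colimit over $n$ of the double complexes $\check{C}^{\bullet}(X_n/X,I^{\bullet})$ attached to an injective resolution $F\to I^{\bullet}$ of kfl sheaves, using that an injective kfl sheaf has trivial higher \v{C}ech cohomology along each $X_n\to X$ and that filtered colimits are exact, so that the two spectral sequences of the colimit double complex read off $\check H^i_{\mr{kfl}}(\mathscr{X}_{\N},\underline{H}_{\mr{kfl}}^j(F))$ on one side and $H^{i+j}_{\mr{kfl}}(X,F)$ on the other.

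The step I expect to be the main obstacle is organizational rather than conceptual: carrying out the identification of the iterated fibre products with $X_n\times_{\Spec\Z}H_n^r$ compatibly across all $r$ and all $n$ — matching up the cosimplicial structure maps and the transition maps in $n$ — and then checking that the resulting data is literally in the shape demanded by (L3) in \cite{art3}. The one genuinely non-formal ingredient, the representability of the \v{C}ech nerves in $(\mr{fs}/X)$, is special to the Kummer log flat situation and is already supplied by $X_n\times_XX_n\cong X_n\times_{\Spec\Z}H_n$; in an arbitrary site these fibre products need not exist in the underlying category, which is exactly why a hypothesis like (L3) is needed.
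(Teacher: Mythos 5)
Your proposal is correct and takes essentially the same route as the paper, whose proof simply invokes Artin's \v{C}ech-to-derived functor spectral sequence (Chap.~II, Sec.~3, (3.3) of the cited notes) for the family $\{X_n\to X\}_{n\geq1}$; your explicit check that this family is directed under divisibility, with the canonical refinement maps $X_n\to X_m$ for $m\mid n$ and the identification of the \v{C}ech nerves with $X_n\times_{\Spec\Z}H_n^{\bullet}$, followed by the colimit-of-double-complexes argument, is precisely the routine content the paper leaves implicit. The only small inaccuracy is your closing gloss on (L3): the fibre products in the \v{C}ech nerve exist by the axioms of the site (fs fibre products exist in $(\mr{fs}/X)$), so the condition is really about the directedness/refinement structure of the family, which is exactly what you verify.
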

\begin{proof}
This follows from \cite[Chap. II, Sec. 3, (3.3)]{art3}.
\end{proof}

Now we are ready to prove Theorem \ref{1.1}.

\begin{proof}[Proof of Theorem \ref{1.1}]
The case $i=1$ for a smooth affine commutative group scheme follows from Kato's computation of $R^1\varepsilon_{\mr{fl}*}G$, and the case $i=1$ for a general smooth commutative group scheme follows from \cite[Thm. 3.14]{zha5}. We proceed with induction on $i$. Assume that $R^i\varepsilon_{\mr{fl}*}G$ is torsion for $0<i<N+1$, we are going to show that $R^{N+1}\varepsilon_{\mr{fl}*}G$ is torsion. 

It suffices to show that the group $H^{N+1}_{\mr{kfl}}(X,G)$ is torsion in the case that the underlying scheme of $X$ is $\Spec R$ with $R$ a strictly henselian noetherian local ring. Let $x$ denote the closed point, let $k$ be the residue field of $R$, and let $P\to M_X$ be a chart of the log structure of $X$ with $P$ some fs monoid, such that $P\xrightarrow{\cong}M_{X,x}/\mc{O}_{X,x}^{\times}$. Let $p$ be the characteristic of $k$. We have a spectral sequence 
$$E_2^{i,j}=\check{H}^i_{\mr{kfl}}(\mathscr{X}_{\N},\underline{H}^j_{\mr{kfl}}(G))\Rightarrow H^{i+j}_{\mr{kfl}}(X,G)$$
by Lemma \ref{1.9}. Let $\gamma\in H^{N+1}_{\mr{kfl}}(X,G)$. By \cite[Prop. 3.1]{zha5}, we can find a Kummer log flat cover $T\rightarrow X$ such that $\gamma$ dies in $H_{\mr{kfl}}^{N+1}(T,G)$. By \cite[Cor. 2.16]{niz1}, we may assume that for some $m$, we have a factorization $T\rightarrow X_m\rightarrow X$ with $T\rightarrow X_m$ a classical flat cover. It follows that $\gamma$ regarded as a class on $X_m$ is trivialized by a classical flat cover, i.e. 
$$\gamma\in\mr{ker}(H_{\mr{kfl}}^{N+1}(X_m,G)\rightarrow H_{\mr{fl}}^0(X_m,R^{N+1}\varepsilon_{\mr{fl}*} G)).$$
Since $R^i\varepsilon_{\mr{fl}*}G$ is torsion for $0<i<N+1$ by induction hypothesis and 
$$H^{N+1}_{\mr{fl}}(X_m,G)=H^{N+1}_{\mr{\acute{e}t}}(X_m,G)=0,$$
the Leray spectral sequence 
$$E_2^{i,j}=H^i_{\mr{fl}}(X_m,R^j\varepsilon_{\mr{fl}*}G)\Rightarrow H^{i+j}_{\mr{kfl}}(X_m,G)$$
implies that $\gamma$ is killed by a positive integer $a$ in $H^{N+1}_{\mr{kfl}}(X_m,G)$. It follows that 
$$a\gamma\in \mr{Ker}(H^{N+1}_{\mr{kfl}}(X,G)\to \check{H}^0_{\mr{kfl}}(\mathscr{X}_{\N},\underline{H}^{N+1}_{\mr{kfl}}(G))).$$

Therefore to show that $\gamma$ is torsion, it suffices to show that the groups 
$$\check{H}^i_{\mr{kfl}}(\mathscr{X}_{\N},\underline{H}^{N+1-i}_{\mr{kfl}}(G))$$
are torsion for $0<i\leq N+1$. By Corollary \ref{1.8}, we are left with showing that the groups $\check{H}^i_{\mr{kfl}}(\mathscr{X}_{\N},\underline{H}^{N+1-i}_{\mr{kfl}}(G))$ are torsion for $0<i<N+1$. Further it suffices to show that the group  $\check{H}^i_{\mr{kfl}}(X_n/X,\underline{H}^{N+1-i}_{\mr{kfl}}(G))$ is torsion for any $0<i<N+1$ and any $n$.

The torsionness of $R^t\varepsilon_{\mr{fl}*}G$ for $0<t<N+1$, 
$$H^r_{\mr{fl}}(\underbrace{X_n\times_X\cdots\times_XX_n}_\text{$d+1$ folded},G)=H^r_{\mr{fl}}(X_n\times_{\Spec\Z} H_n^d,G)=H^r_{\mr{\acute{e}t}}(X_n\times_{\Spec\Z} H_n^d,G)=0$$
for any $r>0$ and any $d\geq0$, and the Leray spectral sequence 
$$E_2^{u,v}=H^u_{\mr{fl}}(\underbrace{X_n\times_X\cdots\times_XX_n}_\text{$d+1$ folded},R^v\varepsilon_{\mr{fl}*}G)\Rightarrow H^{v+v}_{\mr{kfl}}(\underbrace{X_n\times_X\cdots\times_XX_n}_\text{$d+1$ folded},G)$$
together imply that $H^{N+1-i}_{\mr{kfl}}(\underbrace{X_n\times_X\cdots\times_XX_n}_\text{$d+1$ folded},G)$ is torsion for $0<i<N+1$. It follows that $\check{H}^i_{\mr{kfl}}(X_n/X,\underline{H}^{N+1-i}_{\mr{kfl}}(G))$ is torsion for any $0<i<N+1$ and any $n$. This finishes the proof.
\end{proof}

\section{The second higher direct image}
We have shown that the higher direct images of a smooth commutative group scheme is torsion. In this section, we present further descriptions of the second higher direct image.

The following theorem is taken from \cite{zha5}.

\begin{thm}\label{2.1}
Let $X$ be an fs log scheme such that its underlying scheme is $\Spec R$ with $R$ a noetherian strictly Henselian local ring. Let $P_X\to M_X$ be a chart of the log structure $M_X$ of $X$ with $P$ an fs monoid (here $P_X$ denotes the constant sheaf associated to $P$), such that $P\xrightarrow{\cong}M_{X,x}/\mc{O}_{X,x}^{\times}$. Let $G$ be a smooth commutative group scheme over the underlying scheme of $X$, and we endow $G$ with the inverse image log structure. Let $X_n$ be as constructed in the beginning of Section \ref{sec1}, $x$ the closed point of $X$, and $x_n:=X_n\times_Xx$. Then the canonical map
$$\varinjlim_n\check{H}^2_{\mr{kfl}}(X_n/X,G)\xrightarrow{\cong}H^2_{\mr{kfl}}(X,G)$$
given by the spectral sequence (\ref{eq1.3}) is an isomorphism.
\end{thm}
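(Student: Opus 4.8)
The plan is to recognise the map of the theorem as an edge morphism of the spectral sequence (\ref{eq1.3}) and to show that its three low-degree obstruction terms vanish. By Lemma \ref{1.9} we have the first-quadrant spectral sequence
\[E_2^{i,j}=\check{H}^i_{\mr{kfl}}(\mathscr{X}_{\N},\underline{H}^j_{\mr{kfl}}(G))\Longrightarrow H^{i+j}_{\mr{kfl}}(X,G),\]
whose edge morphism $E_2^{2,0}\to H^2_{\mr{kfl}}(X,G)$ is precisely $\varinjlim_n\check{H}^2_{\mr{kfl}}(X_n/X,G)\to H^2_{\mr{kfl}}(X,G)$. This edge morphism is an isomorphism provided $E_2^{0,1}=0$ (so the incoming differential $d_2\colon E_2^{0,1}\to E_2^{2,0}$ vanishes and $E_2^{2,0}=E_\infty^{2,0}$) and $E_2^{1,1}=E_2^{0,2}=0$ (so the remaining graded pieces $E_\infty^{1,1}$, $E_\infty^{0,2}$ of $H^2_{\mr{kfl}}(X,G)$, which are subquotients of $E_2^{1,1}$, $E_2^{0,2}$, vanish). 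Since filtered colimits are exact and $\underbrace{X_n\times_X\cdots\times_X X_n}_{i+1}\cong X_n\times_{\Spec\Z}H_n^i$, the term $E_2^{i,j}$ is a subquotient of $\varinjlim_n H^j_{\mr{kfl}}(X_n\times_{\Spec\Z}H_n^i,G)$; so it suffices to prove
\[\varinjlim_n H^1_{\mr{kfl}}(X_n,G)=0,\quad \varinjlim_n H^1_{\mr{kfl}}(X_n\times_{\Spec\Z}H_n,G)=0,\quad \varinjlim_n H^2_{\mr{kfl}}(X_n,G)=0.\]

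For the first two, note that $\underline X$ is strictly Henselian, $\underline X_n$ is finite local over it with residue field still $k$, and $H_n=\Spec\Z[(P^{1/n})^{\mr{gp}}/P^{\mr{gp}}]$ is diagonalizable, so the underlying scheme of $X_n$ (resp.\ of $X_n\times_{\Spec\Z}H_n$) is a finite disjoint union of spectra of strictly Henselian local rings; hence $H^r_{\mr{fl}}(-,G)=H^r_{\mr{\acute{e}t}}(-,G)=0$ for $r>0$ there, and the Leray spectral sequence for $\varepsilon_{\mr{fl}}$ gives $H^1_{\mr{kfl}}(-,G)=H^0_{\mr{fl}}(-,R^1\varepsilon_{\mr{fl}*}G)$. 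Now Kato's computation of $R^1\varepsilon_{\mr{fl}*}G$ presents it as a filtered colimit over $m\geq1$ of sheaves built by tensor product and Tate twist from the bounded-torsion sheaves $(M^{\mr{gp}}/\mc O_X^\times)\otimes\Z/m$. In the tower $\{X_n\}$ the transition map identifies $M_{X_n}^{\mr{gp}}/\mc O_{X_n}^\times\hookrightarrow M_{X_m}^{\mr{gp}}/\mc O_{X_m}^\times$ with multiplication by $m/n$ on $P^{\mr{gp}}$, since $(P^{1/n})^{\mr{gp}}=\tfrac{1}{n}P^{\mr{gp}}\subset\tfrac{1}{m}P^{\mr{gp}}=(P^{1/m})^{\mr{gp}}$, and likewise after base change by $H_n$. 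As $P^{\mr{gp}}$ is finitely generated, any section lies at a finite torsion level $m$ and is annihilated over $X_{nm}$ (resp.\ $X_{nm}\times_{\Spec\Z}H_{nm}$), so both colimits vanish.

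It remains to show $\varinjlim_n H^2_{\mr{kfl}}(X_n,G)=0$, which I expect to be the hard part. As above, over the strictly Henselian $\underline X_n$ the Leray spectral sequence for $\varepsilon_{\mr{fl}}$, together with $H^2_{\mr{fl}}(X_n,G)=0$, gives a short exact sequence $0\to H^1_{\mr{fl}}(X_n,R^1\varepsilon_{\mr{fl}*}G)\to H^2_{\mr{kfl}}(X_n,G)\to H^0_{\mr{fl}}(X_n,R^2\varepsilon_{\mr{fl}*}G)$; the colimit of the left-hand terms vanishes by the same ``multiplication by $m/n$'' argument applied to the $(M^{\mr{gp}}/\mc O_X^\times)\otimes\Z/m$-pieces of $R^1\varepsilon_{\mr{fl}*}G$, so one is reduced to $\varinjlim_n H^0_{\mr{fl}}(X_n,R^2\varepsilon_{\mr{fl}*}G)=0$, i.e.\ to showing that the log-theoretic part of $H^2$ is again assembled, over a strictly Henselian base, out of bounded-torsion $(M^{\mr{gp}}/\mc O_X^\times)\otimes\Z/m$-type pieces on which the transition maps of $\{X_n\}$ act by $m/n$. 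One route is to pass to the limit $X_\infty=\varprojlim_n X_n$, whose underlying scheme is strictly Henselian local and whose characteristic group $M_{X_\infty}^{\mr{gp}}/\mc O_{X_\infty}^\times=P^{\mr{gp}}\otimes\Q$ is divisible, and to combine a continuity statement for Kummer log flat cohomology with the vanishing of $R^q\varepsilon_{\mr{fl}*}G$ for $q>0$ over a divisible-log base, giving $H^2_{\mr{kfl}}(X_\infty,G)=H^2_{\mr{fl}}(\underline X_\infty,G)=0$. A route staying in the noetherian setting is to use Corollary \ref{1.4} to rewrite $\check{H}^2_{\mr{kfl}}(X_{nm}/X_n,G)$ as the Hochschild cohomology of the finite group scheme attached to $X_{nm}/X_n$ over the log point $x_n$ and to analyse how this group, hence $H^0_{\mr{fl}}(X_n,R^2\varepsilon_{\mr{fl}*}G)$, collapses as $n$ grows. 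In either case, establishing that the second log cohomology is killed in the colimit over $\{X_n\}$ — without circularly invoking Theorems \ref{2.6}, \ref{2.7}, \ref{2.9} — is the main obstacle; the torsionness provided by Theorem \ref{1.1} and Kato's machinery for $R^{\bullet}\varepsilon_{\mr{fl}*}\Gm$ are the principal tools I would use.
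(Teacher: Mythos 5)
Your reduction is set up correctly: the map in question is the edge morphism $E_2^{2,0}\to H^2_{\mr{kfl}}(X,G)$ of the spectral sequence (\ref{eq1.3}), and it suffices to kill $E_2^{0,1}$, $E_2^{1,1}$ and the degree-$(0,2)$ contribution (in fact only $E_\infty^{0,2}=0$ is needed, not $E_2^{0,2}=0$). Your handling of the first two terms is essentially right: the underlying schemes of $X_n$ and of $X_n\times_XX_n\cong X_n\times_{\Spec\Z}H_n$ are finite disjoint unions of strictly Henselian local schemes, so $H^1_{\mr{kfl}}$ there injects into sections of $R^1\varepsilon_{\mr{fl}*}G=\varinjlim_m\mc{H}om_X(\Z/m\Z(1),G)\otimes_{\Z}(\Gml/\Gm)_{X_{\mr{fl}}}$, and the transition maps of the tower act on the characteristic group $(P^{1/n})^{\mr{gp}}\cong P^{\mr{gp}}$ by multiplication by $m/n$, so every torsion class dies at a finite higher level.

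The genuine gap is the third vanishing, $\varinjlim_n H^2_{\mr{kfl}}(X_n,G)=0$, which you explicitly leave open; this is not a technical remainder but the heart of the theorem, since it is what forces surjectivity of the comparison map. Neither of your proposed routes closes it. The route via Corollary \ref{1.4} needs first to identify $H^2_{\mr{kfl}}(X_n,G)$ with $\varinjlim_m\check{H}^2_{\mr{kfl}}(X_{nm}/X_n,G)$, which is precisely Theorem \ref{2.1} applied to $X_n$, hence circular; and the structural facts that would let you see the $\bigwedge^2P^{\mr{gp}}$-shape of $H^2$ and the multiplication-by-$(m/n)^2$ behaviour of the transition maps (Lemma \ref{2.3}, Corollary \ref{2.5}, Theorems \ref{2.6}, \ref{2.7}, \ref{2.9}) all sit downstream of Theorem \ref{2.1} in this paper, as you yourself note. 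The route via $X_\infty=\varprojlim_nX_n$ leaves the fs, noetherian framework entirely (the characteristic monoid becomes $P^{\mr{gp}}\otimes\Q$, not finitely generated), so both the continuity statement for Kummer log flat cohomology and the asserted vanishing of $R^q\varepsilon_{\mr{fl}*}G$ over a divisible-log base are themselves unproven claims of comparable difficulty. As written, then, the proposal is a correct reduction plus two unexecuted strategies rather than a proof; note also that the paper does not reprove this statement but cites \cite[Thm.~3.19]{zha5}, so the missing vanishing (or some substitute for it) is exactly what would have to be supplied from that earlier argument.
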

\begin{proof}
See \cite[Thm. 3.19]{zha5}.
\end{proof}

\begin{cor}\label{2.2}
Let the notation and the assumptions be as in Theorem \ref{2.1}. Then the canonical map 
$$H^2_{\mr{kfl}}(X,G)\xrightarrow{\cong}H^2_{\mr{kfl}}(x,G)$$
is an isomorphism.
\end{cor}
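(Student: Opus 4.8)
The plan is to deduce the isomorphism by applying Theorem \ref{2.1} to $X$ and to the log point $x$ at the same time, and then comparing the two resulting colimits of \v{C}ech cohomology groups by means of Lemma \ref{1.2}.

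First I would record that the canonical map of the corollary is pullback along the strict closed immersion $x\hookrightarrow X$, where $x$ carries the log structure induced from $X$, and that $x$ itself satisfies the hypotheses of Theorem \ref{2.1}: its underlying scheme $\Spec k$ is a noetherian local ring, it is strictly Henselian because $R$ is (so $k$ is separably closed), and the chart $P_X\to M_X$ restricts to a chart $P_x\to M_x$ with $P\xrightarrow{\cong}M_{X,x}/\mc{O}_{X,x}^{\times}=M_{x,x}/\mc{O}_{x,x}^{\times}$ since the log structure on $x$ is the induced one. Moreover base change along $x\to X$ sends the Kummer log flat cover $X_n\to X$ to $x_n\to x$, i.e. $X_n\times_Xx\cong x_n$; hence the family $\mathscr{X}_{\N}=\{X_n\to X\}_{n\geq1}$ pulls back to $\{x_n\to x\}_{n\geq1}$, which again satisfies condition (L3) of \cite[\S 2]{art3} by Lemma \ref{1.9} applied to $x$, so that the spectral sequence (\ref{eq1.3}) is available for both.

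Next I would build the commutative square
$$\xymatrix{
\varinjlim_n\check{H}^2_{\mr{kfl}}(X_n/X,G)\ar[r]\ar[d] & H^2_{\mr{kfl}}(X,G)\ar[d]\\
\varinjlim_n\check{H}^2_{\mr{kfl}}(x_n/x,G)\ar[r] & H^2_{\mr{kfl}}(x,G)
}$$
in which the horizontal arrows are the edge morphisms $\check{H}^2_{\mr{kfl}}(\mathscr{X}_{\N},\underline{H}^0_{\mr{kfl}}(G))\to H^2_{\mr{kfl}}(X,G)$ of the \v{C}ech-to-derived functor spectral sequences (\ref{eq1.3}) for $X$ and for $x$, and the vertical arrows are induced by pullback along $x\to X$. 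The horizontal arrows are isomorphisms by Theorem \ref{2.1}, applied to $X$ and to $x$ respectively. The left vertical arrow is, for each $n$, the base-change map $\check{H}^2_{\mr{kfl}}(X_n/X,G)\to\check{H}^2_{\mr{kfl}}(x_n/x,G)$, which is an isomorphism by Lemma \ref{1.2}; passing to the colimit over $n$ it stays an isomorphism. Therefore the right vertical arrow, which is exactly the canonical map of the corollary, is an isomorphism, as desired.

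I expect the only genuine work to be in justifying the commutativity of this square and the identification of its left vertical arrow with the maps of Lemma \ref{1.2}. Concretely, one must verify that the edge morphism of (\ref{eq1.3}) is natural with respect to the morphism $x\to X$ together with the compatible pair of covering families $\mathscr{X}_{\N}$ and $\{x_n\to x\}_{n\geq1}$, and that the map it induces on \v{C}ech cohomology is precisely pullback along $x\to X$. This is formal: Artin's construction in \cite[Chap. II, \S 3]{art3} is functorial in the pair consisting of a site and a covering family, and the isomorphism $X_n\times_Xx\cong x_n$ exhibits the \v{C}ech nerve of $x_n\to x$ as the pullback of that of $X_n\to X$, so applying the pullback functor levelwise to the two \v{C}ech double complexes makes the desired compatibilities manifest. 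No input beyond Theorem \ref{2.1} and Lemma \ref{1.2} is needed.
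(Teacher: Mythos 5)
Your proposal is correct and follows essentially the same route as the paper: the paper's proof is exactly the commutative square comparing the edge maps $\varinjlim_n\check{H}^2_{\mr{kfl}}(X_n/X,G)\to H^2_{\mr{kfl}}(X,G)$ and $\varinjlim_n\check{H}^2_{\mr{kfl}}(x_n/x,G)\to H^2_{\mr{kfl}}(x,G)$ from Theorem \ref{2.1} (applied to $X$ and to $x$) with the restriction maps, the latter being isomorphisms by Lemma \ref{1.2}. Your additional checks (that $x$ satisfies the hypotheses of Theorem \ref{2.1}, that $X_n\times_Xx\cong x_n$, and the naturality of the edge morphisms) are exactly the implicit verifications behind the paper's one-line argument.
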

\begin{proof}
In the canonical commutative diagram
$$\xymatrix{
\varinjlim_n\check{H}^2_{\mr{kfl}}(X_n/X,G)\ar[r]^\cong\ar[d]^\cong &\varinjlim_n\check{H}^2_{\mr{kfl}}(x_n/x,G)\ar[d]^\cong \\
H^2_{\mr{kfl}}(X,G)\ar[r] &H^2_{\mr{kfl}}(x,G)
}$$
the vertical maps are isomorphisms by Theorem \ref{2.1} and the upper horizontal map is an isomorphism by Lemma \ref{1.2}. Then the result follows.
\end{proof}

\subsection{The case of smooth affine commutative group schemes with connected fibers}
The following lemma is a generalization of \cite[Lem. 3.20]{zha5} from tori to smooth affine commutative group schemes.

\begin{lem}\label{2.3}
Let the notation and the assumptions be as in Theorem \ref{2.1}. Let $x$ be the closed point of $X$, and $p$ the characteristic of the residue field $k$ of $R$. We further assume that the closed fiber $G\times_Xx$ of $G$ is affine and connected. Then we have
\begin{enumerate}[(1)]
\item $\check{H}_{\mr{kfl}}^2(X_n/X,G)=\check{H}_{\mr{k\acute{e}t}}^2(X_n/X,G)$ for $(n,p)=1$;
\item $\check{H}_{\mr{kfl}}^2(X_{p^r}/X,G)=0$ for $r>0$;
\item $$H_{\mr{kfl}}^2(X,G)=\varinjlim_{(n,p)=1}\check{H}_{\mr{kfl}}^2(X_n/X,G)=\varinjlim_{(n,p)=1}\check{H}_{\mr{k\acute{e}t}}^2(X_n/X,G)=H_{\mr{k\acute{e}t}}^2(X,G),$$
in particular $H_{\mr{kfl}}^2(X,G)$ is torsion, $p$-torsion-free and divisible.
\end{enumerate}
\end{lem}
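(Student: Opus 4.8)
The plan is to deduce all three statements from Corollary~\ref{1.4} (together with its Kummer log étale analogue, which follows by the same argument or can be quoted from \cite{zha5}): for every $n\geq 1$ this identifies $\check{H}^i_{\mr{kfl}}(X_n/X,G)$ for $i\geq 1$ with the Hochschild cohomology $H^i_{x_{\mr{fl}}}(H_n,G)$, where $H_n=\Spec\Z[\Lambda_n]$ with $\Lambda_n:=(P^{1/n})^{\mr{gp}}/P^{\mr{gp}}$ a finite abelian group, and $G$ carries the trivial $H_n$-action.

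\emph{Part (1).} Since $(n,p)=1$, the finite group scheme $H_n$ is étale over $k$, hence, $k$ being separably closed, is the constant group scheme attached to $\Lambda_n$. The standard complex $C^{\bullet}_x(H_n,G)$ of Lemma~\ref{1.3} is then term by term the bar complex computing the ordinary group cohomology $H^{\bullet}(\Lambda_n,G(k))$, and this complex involves no topology; as $X_n\to X$ is at once a Kummer log flat and a Kummer log étale cover and $G$ (with its inverse image log structure) is a sheaf for both sites, Corollary~\ref{1.4} and its két counterpart identify both $\check{H}^2_{\mr{kfl}}(X_n/X,G)$ and $\check{H}^2_{\mr{k\acute{e}t}}(X_n/X,G)$ with $H^2(\Lambda_n,G(k))$, which is (1).

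\emph{Part (2), the crux.} By Corollary~\ref{1.4} it suffices to show $H^2_{x_{\mr{fl}}}(H_{p^r},G)=0$. Over $k$ the group $H_{p^r}$ is infinitesimal, so each $H_{p^r,k}^{i}$ is $\Spec A_i$ with $A_i$ a local Artinian $k$-algebra with residue field $k$. The splitting $k\to A_{\bullet}\to k$ decomposes the standard complex as $C^{\bullet}_x(H_{p^r},G)=C^{\bullet}_{\mr{cst}}\oplus C^{\bullet}_{\mr{aug}}$, where $C^{i}_{\mr{cst}}=G(k)$ is the standard complex of the trivial group, hence acyclic in positive degrees, and $C^{i}_{\mr{aug}}=\ker\bigl(G(A_i)\to G(k)\bigr)$ is the group of $A_i$-valued points of the formal group $\widehat{G}$ obtained by completing $G\times_X x$ along its identity section. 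Thus $H^2_{x_{\mr{fl}}}(H_{p^r},G)=H^2(C^{\bullet}_{\mr{aug}})$, the second Hochschild cohomology of $H_{p^r}$ with coefficients in $\widehat{G}$, and it remains to kill this group. Here I would use the structure of the commutative smooth connected affine $k$-group $G\times_X x$: over the separably closed $k$ it sits in an extension $0\to\widetilde{T}\to G\times_X x\to U\to 0$ with $\widetilde{T}\cong\Gm^{s}$ a split torus and $U$ smooth connected unipotent, so that $\widehat{G}$ is an extension of $\widehat{U}$ by $\widehat{\widetilde{T}}$ as sheaves with trivial $H_{p^r}$-action. The long exact Hochschild sequence then reduces the vanishing of $H^2(C^{\bullet}_{\mr{aug}})$ to the vanishing of the analogous groups for $\widehat{\widetilde{T}}$ and $\widehat{U}$: the first is exactly the torus case, namely \cite[Lem. 3.20]{zha5} phrased through the same decomposition, while the second is handled by dévissage along a composition series of $U$ with graded pieces $\Ga$, the base case being the vanishing of the second Hochschild cohomology of $H_{p^r}$ with coefficients in the formal additive group, which holds because $\mr{Hom}$ and $\mr{Ext}^1$ from a group scheme of multiplicative type to a unipotent (formal) group vanish. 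I expect the genuinely delicate point to be precisely this dévissage: keeping track of the connecting maps so that it is the $H^2$'s, and not merely the $H^2$'s of the graded pieces, that vanish, and treating the case where $k$ is imperfect, in which $U$ need not be $k$-split.

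\emph{Part (3).} By Theorem~\ref{2.1}, $H^2_{\mr{kfl}}(X,G)=\varinjlim_n\check{H}^2_{\mr{kfl}}(X_n/X,G)$. For $n=mp^{r}$ with $(m,p)=1$ we have $H_n\cong H_m\times H_{p^{r}}$, and the Lyndon--Hochschild--Serre spectral sequence for this product computes $H^2_{x_{\mr{fl}}}(H_n,G)$: part (2) makes the term $H^0(H_m,H^2_{x_{\mr{fl}}}(H_{p^r},G))$ vanish, while the term $H^1(H_m,H^1_{x_{\mr{fl}}}(H_{p^r},G))$ and the image of the differential $d_2\colon H^1_{x_{\mr{fl}}}(H_{p^r},G)\to H^2(H_m,G)$ vanish because $H_m$ has order prime to $p$ whereas $H^1_{x_{\mr{fl}}}(H_{p^r},G)$ is $p$-primary torsion. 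Hence the inflation map $\check{H}^2_{\mr{kfl}}(X_m/X,G)\to\check{H}^2_{\mr{kfl}}(X_{mp^r}/X,G)$ is an isomorphism, so the colimit over all $n$ coincides with the colimit over $m$ prime to $p$. By part (1) this equals $\varinjlim_{(m,p)=1}\check{H}^2_{\mr{k\acute{e}t}}(X_m/X,G)$, which by the Kummer log étale analogue of Theorem~\ref{2.1} is $H^2_{\mr{k\acute{e}t}}(X,G)$. Finally, invoking part (1) once more, $H^2_{\mr{kfl}}(X,G)=\varinjlim_{(m,p)=1}H^2(\Lambda_m,G(k))$; each term is killed by $\#\Lambda_m$, an integer prime to $p$, so the group is torsion and $p$-torsion-free, and divisibility follows from this description (for instance realising it as the continuous cohomology $H^2\bigl(\varprojlim_{(m,p)=1}\Lambda_m,\,G(k)\bigr)$) together with the $\ell$-divisibility of $G(k)$ for every prime $\ell\neq p$, which holds because $G\times_X x$ is smooth and connected over the separably closed $k$, so that multiplication by $\ell$ is surjective on $G(k)$.
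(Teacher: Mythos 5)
Parts (1) and (3) of your proposal are essentially the paper's argument: (1) is immediate from the fact that $X_n\to X$ is a Kummer log \'etale cover for $(n,p)=1$, and (3) combines Theorem \ref{2.1}, parts (1)--(2), and a restriction--corestriction/spectral-sequence argument to discard the $p$-part of the index $n$ (the paper defers this to \cite[Lem.\ 3.20 (3)]{zha5}, and your LHS argument for the product $H_n\cong H_m\times H_{p^r}$ is a reasonable way to carry it out). The problem is part (2), which is the crux, and there your argument has a genuine gap that you yourself flag but do not close. After splitting off the constant part of the standard complex you reduce to the Hochschild cohomology of $H_{p^r}$ with coefficients in the formal completion $\widehat{G}$, and you propose to d\'evisser $\widehat{U}$ along a composition series of $U$ with graded pieces $\Ga$. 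Over an imperfect separably closed field (e.g.\ the separable closure of $\F_p(t)$, which is allowed here since $k$ is only the residue field of a strictly Henselian noetherian local ring), a smooth connected unipotent group need not be $k$-split, so such a composition series need not exist; and your proposed base case --- vanishing of $\mr{Ext}^1$ from a multiplicative-type group scheme to a unipotent \emph{formal} group --- is not a standard citable fact but essentially the statement to be proved, transported to the formal category where the usual structure theorems are not directly available.

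The paper avoids both difficulties by staying at the level of algebraic groups: it identifies $H^2_{x_{\mr{fl}}}(H_{p^r},G)$ with the group $\mr{Ext}_{\mr{s}}(H_{p^r}\times_Xx,G\times_Xx)$ of extension classes admitting a scheme-theoretic section via \cite[Expos\'e XVII, Prop.\ A.3.1]{sga3-2}, and then kills the larger group $\mr{Ext}_{\mr{alg}}(H_{p^r}\times_Xx,G\times_Xx)$ using the exact sequence attached to $0\to T\to G\times_Xx\to U\to 0$: extensions of $H_{p^r}$ by $T$ are of multiplicative type, hence commutative and trivial over the separably closed $k$ (\cite[Expos\'e XVII, Prop.\ 7.1.1]{sga3-2}), while extensions of the connected $H_{p^r}$ by the \emph{smooth} unipotent $U$ vanish by \cite[Expos\'e XVII, Thm.\ 5.1.1 (i) (d)]{sga3-2} --- a statement that needs only smoothness of $U$, not splitness, and so covers the imperfect case. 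To repair your proof you should replace the formal-group d\'evissage by this identification with $\mr{Ext}_{\mr{s}}$ and the two SGA3 vanishing results.
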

\begin{proof}
If $(n,p)=1$, $X_n\to X$ is a Kummer log \'etale cover. Then part (1) is clear by the definition of \v{C}ech cohomology.

We have $\check{H}_{\mr{kfl}}^2(X_{p^r}/X,G)=H^2_{x_{\mr{fl}}}(H_{p^r},G)$ for $r>0$ by Corollary \ref{1.4}, where $H_{p^r}$ is as in Corollary \ref{1.4}. We are reduced to show that $H^2_{x_{\mr{fl}}}(H_{p^r},G)=0$. Let 
$$\mr{Ext}_{\mr{alg}}(H_{p^r}\times_Xx,G\times_Xx)$$
denote the group of isomorphism extension classes of $H_{p^r}\times_Xx$ by $G\times_Xx$, and let
$$\mr{Ext}_{\mr{s}}(H_{p^r}\times_Xx,G\times_Xx)$$ 
denote the subgroup consisting of the isomorphic extension classes which admit a (not necessarily homomorphic) section, see \cite[Expos\'e XVII, \S A.2, \S A.3]{sga3-2}. The group $H^2_{x_{\mr{fl}}}(H_{p^r},G)$ can be identified with the group $\mr{Ext}_{\mr{s}}(H_{p^r}\times_Xx,G\times_Xx)$ by \cite[Expos\'e XVII, Prop. A.3.1]{sga3-2}. Hence it suffices to show that 
$$\mr{Ext}_{\mr{alg}}(H_{p^r}\times_Xx,G\times_Xx)=0.$$
By \cite[\S 9.2, Thm. 2]{b-l-r1}, $G\times_Xx$ is an extension of a unipotent group $U$ by a torus $T$ over $k$. Since a torus is smooth and $G\times_Xx$ is smooth over $k$, the unipotent quotient $U$ of $G\times_Xx$ is also smooth over $k$ by descent. By the exact sequence from \cite[\S A.2, Prop. A.2.1 a)]{sga3-2}, it suffices to show that
$$\mr{Ext}_{\mr{alg}}(H_{p^r}\times_Xx,T)=\mr{Ext}_{\mr{alg}}(H_{p^r}\times_Xx,U)=0.$$
By \cite[Expos\'e XVII, Prop. 7.1.1 ((b))]{sga3-2}, any extension of $H_{p^r}\times_Xx$ by $T$ must be of multiplicative type, therefore must be commutative. Since the base field $k$ is separable closed, such an extension must be trivial. Therefore the group $\mr{Ext}_{\mr{alg}}(H_{p^r}\times_Xx,T)$ is trivial. Since $H_{p^r}\times_Xx$ is connected and $U$ is smooth over $k$, we have $\mr{Ext}_{\mr{alg}}(H_{p^r}\times_Xx,U)=0$ by \cite[Expos\'e XVII, Thm. 5.1.1 (i) (d)]{sga3-2}. This finishes the proof of (2).

Part (3) can be proven in the same way as \cite[Lem. 3.20 (3)]{zha5}.
\end{proof}

Before making a corollary to Lemma \ref{2.3}, we make a lemma about the first direct image functor.

\begin{lem}\label{2.4}
Let $X$ be a locally  noetherian fs log scheme, and let $f:G_1\to G_2$ be an epimorphism of smooth commutative group schemes over the underlying scheme of $X$. Assume that $f$ is as in one of the following two cases.
\begin{enumerate}[(1)]
\item Both $G_1$ and $G_2$ are affine with geometrically connected fibers over the underlying scheme of $X$.
\item For $i=1,2$, $G_i$ is an extension of an abelian scheme $A_i$ by a torus $T_i$ over the underlying scheme of $X$, and $f$ is an isogeny. 
\end{enumerate}
Then $R^1\varepsilon_{\mr{fl}*}f:R^1\varepsilon_{\mr{fl}*}G_1\to R^1\varepsilon_{\mr{fl}*}G_2$ is surjective.
\end{lem}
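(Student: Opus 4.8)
The plan is to reduce to a stalk statement and then use a five-term exact sequence. Since $R^1\varepsilon_{\mr{fl}*}$ commutes with the localization that computes stalks on the classical flat (equivalently, classical \'etale) site, surjectivity of $R^1\varepsilon_{\mr{fl}*}f$ can be checked on stalks at geometric points, i.e.\ we may assume the underlying scheme of $X$ is $\Spec R$ with $R$ a strictly Henselian noetherian local ring and must show that $H^1_{\mr{kfl}}(X,G_1)\to H^1_{\mr{kfl}}(X,G_2)$ is surjective. Let $G_3=\ker(f)$, a flat commutative group scheme over $\Spec R$; in case (1) it is affine, and in case (2) it is finite flat. We then have a short exact sequence $0\to G_3\to G_1\xrightarrow{f} G_2\to 0$ of sheaves on $(\mr{fs}/X)_{\mr{kfl}}$ (the surjectivity of $f$ as a Kummer log flat sheaf map follows from its surjectivity as an fppf sheaf map, since every classical flat cover is a Kummer log flat cover), hence a long exact cohomology sequence
\[
H^1_{\mr{kfl}}(X,G_1)\to H^1_{\mr{kfl}}(X,G_2)\to H^2_{\mr{kfl}}(X,G_3).
\]
So it suffices to show that the connecting map $H^1_{\mr{kfl}}(X,G_2)\to H^2_{\mr{kfl}}(X,G_3)$ is zero, or a fortiori that it is injective-free in the sense that the image of $H^1_{\mr{kfl}}(X,G_1)$ exhausts $H^1_{\mr{kfl}}(X,G_2)$; the cleanest route is to show the connecting homomorphism vanishes.

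To see the connecting map vanishes, I would factor it through $\check{\mr H}$-cohomology using the covers $X_n\to X$ and the computations already in hand. By Theorem \ref{2.1} (applied to $G_3$) and Corollary \ref{1.4}, the target $H^2_{\mr{kfl}}(X,G_3)=\varinjlim_n\check{H}^2_{\mr{kfl}}(X_n/X,G_3)$ is identified with a colimit of Hochschild cohomology groups $\varinjlim_n H^2_{x_{\mr{fl}}}(H_n,G_3)$. In case (1) the argument of Lemma \ref{2.3}(2) applies verbatim to the affine $G_3$: writing $G_3\times_X x$ as an extension of a unipotent group by a torus and invoking the same $\mr{Ext}_{\mr{alg}}$-vanishing results from \cite{sga3-2} shows $H^2_{x_{\mr{fl}}}(H_{p^r},G_3)=0$, and together with the Kummer log \'etale part this forces the colimit to be captured by prime-to-$p$ covers where one uses that $f$ is already a Kummer log \'etale (indeed \'etale) surjection, so $H^1_{\mr{k\acute et}}(X,G_1)\to H^1_{\mr{k\acute et}}(X,G_2)$ is surjective by the long exact sequence on the Kummer log \'etale site combined with $H^2_{\mr{k\acute et}}$ being unaffected. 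In case (2), $G_3=G[n]$ is finite flat, and one uses Theorem \ref{0.3}/\ref{2.7}: over a strictly Henselian base the only contributions to $H^2_{\mr{kfl}}(X,G_3)$ come from the invertible-order primary parts, and the isogeny hypothesis together with the structure of $G_i$ as extensions of abelian schemes by tori lets one split off the tame part and handle the wild part separately, where $H^1$ surjects by the smoothness of the $G_i$ and vanishing of higher flat cohomology over the strictly Henselian local ring.

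The main obstacle I expect is the wild ($p$-primary) part of case (1) and the $p$-part of case (2): away from $p$ everything is Kummer log \'etale and the statement is essentially formal from the long exact sequence together with $H^{\ge 2}_{\mr{\acute et}}$-vanishing over a strictly Henselian base, but at $p$ one genuinely needs the vanishing $H^2_{x_{\mr{fl}}}(H_{p^r},G_3)=0$ — i.e.\ that the relevant algebraic extension group is trivial — which in case (1) is the content of the extension-theoretic argument borrowed from Lemma \ref{2.3}(2) (requiring $G_3\times_X x$ to be affine, hence the hypothesis that $G_1,G_2$ are affine with connected fibers), and in case (2) is where the finite-flatness of the kernel and the isogeny hypothesis are essential. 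I would organize the final write-up so that the prime-to-$p$ case is dispatched in one line and the bulk of the proof is the $p$-primary vanishing, citing Lemma \ref{2.3}(2), Corollary \ref{1.4}, and Theorem \ref{2.1} (and, for case (2), Theorem \ref{2.7}).
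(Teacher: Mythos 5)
There is a genuine gap, and it sits exactly where you locate the ``main obstacle.'' Your plan is to kill the connecting map $H^1_{\mr{kfl}}(X,G_2)\to H^2_{\mr{kfl}}(X,G_3)$, $G_3=\ker f$, by proving vanishing statements about $H^2$ of $G_3$; but every tool you invoke for this (Lemma \ref{1.2}, Lemma \ref{1.3}, Corollary \ref{1.4}, Theorem \ref{2.1}, Lemma \ref{2.3}) is stated, and proved, only for \emph{smooth} group schemes, whereas the kernel of an epimorphism of smooth affine connected groups is in general neither smooth nor connected. The claim that ``the argument of Lemma \ref{2.3}(2) applies verbatim to $G_3$'' fails already for $G_1=G_2=\Gm$ and $f=p$ in characteristic $p$: then $G_3=\mu_p$, which is not an extension of a unipotent group by a torus, and $H^2_{x_{\mr{fl}}}(H_{p^r},\mu_p)$ does not vanish (the extension $0\to\mu_p\to\mu_{p^2}\to\mu_p\to0$ already gives a nontrivial class in the relevant $\mathrm{Ext}$ group). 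In the same example $f$ is not \'etale, so the assertion that ``$f$ is already a Kummer log \'etale surjection'' is also wrong. Moreover, even the prime-to-$p$ part is not formal: over a strictly Henselian base whose log rank is $\geq 2$ one has $H^2_{\mr{kfl}}(X,\mu_n)\cong \Z/n\Z(-1)\otimes_\Z\bigwedge^2P^{\mr{gp}}\neq0$ for $n$ invertible, so you cannot conclude that the connecting map is zero from vanishing of its target; vanishing of the connecting map is \emph{equivalent} to the surjectivity of $H^1_{\mr{kfl}}(X,G_1)\to H^1_{\mr{kfl}}(X,G_2)$, which is the statement to be proved, and your proposal supplies no independent argument for it (in case (2) the appeal to Theorem \ref{2.7} is both out of scope at $p$, where the order is not invertible, and delicate since that theorem is itself downstream of Lemma \ref{2.4}).

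The missing idea is the one the paper actually uses: work with the explicit description $R^1\varepsilon_{\mr{fl}*}G_i=\varinjlim_n\mc{H}om_X(\Z/n\Z(1),G_i)\otimes_\Z(\Gml/\Gm)_{X_{\mr{fl}}}$ from \cite[Thm.\ 3.14]{zha5}, note that the Hom-sheaves are representable by quasi-finite \'etale group schemes, reduce to the case of a separably closed field, and then prove surjectivity of $\varinjlim_n\mc{H}om(\Z/n\Z(1),G_1)\to\varinjlim_n\mc{H}om(\Z/n\Z(1),G_2)$ directly: in case (1) this map only sees the torus parts and becomes $\mr{Hom}_\Z(X_1,\Z)\otimes\Q/\Z\to\mr{Hom}_\Z(X_2,\Z)\otimes\Q/\Z$, surjective because $f'$ is an epimorphism of tori; in case (2) the prime-to-$p$ part is handled via torsion points, and the $p$-part via the multiplicative parts of the $p$-divisible groups, which for an isogeny of extensions of abelian schemes by tori over a separably closed field are both isomorphic to $(\Q_p/\Z_p(1))^{r+s}$, so an isogeny between them induces a surjection after applying $\varinjlim_n\mc{H}om(\Z/p^n\Z(1),-)$. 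Without some substitute for this structural analysis of the Hom-sheaves, the proposal does not prove the lemma.
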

\begin{proof}
Let $\mc{G}:=(\Gml/\Gm)_{X_{\mr{fl}}}$. By \cite[Thm. 3.14]{zha5}, we have 
$$R^1\varepsilon_{\mr{fl}*}G_i=\varinjlim_n\mc{H}om_X(\Z/n\Z(1),G_i)\otimes_\Z\mc{G}.$$
By \cite[Lem. A.1]{zha5}, the sheaf $\mc{H}om_X(\Z/n\Z(1),G_i)$ is representable by an \'etale quasi-finite group scheme over the underlying scheme of $X$ in both cases. Hence it suffices to check the surjectivity for the classical \'etale topology. Since the stalk at a point $x$ of $\mc{H}om_X(\Z/n\Z(1),G_i)\otimes_\Z\mc{G}$ agrees with $\mc{H}om_x(\Z/n\Z(1),G_i)\otimes_\Z\mc{G}$, we are further reduced to the case that the underlying scheme of $X$ is $\Spec k$ for a separable closed field $k$.

Now we deal with case (1). The group $G_i$ is an extension of a unipotent group $U_i$ by a torus $T_i$ by \cite[\S 9.2, Thm. 2]{b-l-r1}, and thus the map $f$ induces the following commutative diagram
$$\xymatrix{
0\ar[r] &T_1\ar[r]\ar[d]^{f'} &G_1\ar[r]\ar[d]^f &U_1\ar[r]\ar[d]^{f''} &0 \\
0\ar[r] &T_2\ar[r] &G_2\ar[r] &U_2\ar[r] &0
}$$
by \cite[Expos\'e XVII, Prop. 2.4 (i)]{sga3-2}.
Since $f$ is an epimorphism, so are $f'$ and $f''$ by \cite[Expos\'e XVII, Prop. 2.4 (ii)]{sga3-2}. Again by \cite[Expos\'e XVII, Prop. 2.4 (i)]{sga3-2}, the map 
\begin{equation}\label{eq2.1}
\varinjlim_n\mc{H}om_X(\Z/n\Z(1),G_1)\to\varinjlim_n\mc{H}om_X(\Z/n\Z(1),G_2)
\end{equation}
induced by $f$ is canonically identified with the map 
\begin{equation}\label{eq2.2}
\varinjlim_n\mc{H}om_X(\Z/n\Z(1),T_1)\to\varinjlim_n\mc{H}om_X(\Z/n\Z(1),T_2)
\end{equation}
induced by $f'$. Let $X_1$ (resp. $X_2$) be the character group of $T_1$ (resp. $T_2$), then the map (\ref{eq2.2}) can be further identified with
$$\mr{Hom}_{\Z}(X_1,\Z)\otimes_{\Z}\Q/\Z\to \mr{Hom}_{\Z}(X_2,\Z)\otimes_{\Z}\Q/\Z$$
which is surjective by the surjectivity of $f'$. Therefore $R^1\varepsilon_*f$ is surjective. 

At last we show case (2). If the characteristic of the separable closed field $k$ is zero, then the map (\ref{eq2.1}) can be identified with the map
$$G_1(k)_{\mr{tor}}\to G_2(k)_{\mr{tor}}$$
induced by $f$ on the torsion subgroups, which is clearly surjective. We are reduced to consider the case that the characteristic of $k$ is $p>0$. In this case the surjectivity of the coprime to $p$ part of the map (\ref{eq2.1}) can be proven in the same as in the characteristic zero case. Hence we are reduced to check that the surjectivity of the $p$-primary part
\begin{equation}\label{eq2.3}
\varinjlim_n\mc{H}om_X(\Z/p^n\Z(1),G_1)\to\varinjlim_n\mc{H}om_X(\Z/p^n\Z(1),G_2)
\end{equation}
of the map (\ref{eq2.1}). Let $G_1[p^\infty]_{\mr{m}}$ (resp. $G_2[p^\infty]_{\mr{m}}$) be the multiplicative part of the $p$-divisible group of $G_1$ (resp. $G_2$), then the map (\ref{eq2.3}) can be identified with the map
\begin{equation}\label{eq2.4}
\varinjlim_n\mc{H}om_X(\Z/p^n\Z(1),G_1[p^\infty]_{\mr{m}})\to\varinjlim_n\mc{H}om_X(\Z/p^n\Z(1),G_2[p^\infty]_{\mr{m}})
\end{equation}
induced by $f$. 

Now we digress to understand the $p$-divisible groups $G_1[p^\infty]_{\mr{m}}$ and $G_2[p^\infty]_{\mr{m}}$. Since $\mr{Hom}_X(T_1,A_2)=0$ by \cite[Thm. 4.3.2 (1)]{bri2}, the map $f$ induces the following commutative diagram
$$\xymatrix{
0\ar[r] &T_1\ar[r]\ar[d]^{f'} &G_1\ar[r]\ar[d]^f &A_1\ar[r]\ar[d]^{f''} &0 \\
0\ar[r] &T_2\ar[r] &G_2\ar[r] &A_2\ar[r] &0
}.$$
Since $f$ is an isogeny and there is no non-trivial homomorphism from abelian variety to torus, both $f'$ and $f''$ are isogeny. Let $r$ be the dimension of $T_1$ and let $s$ be the $p$-rank of the dual of $A_1$, then the dimension of $T_2$ is also $r$ and the $p$-rank of the dual of $A_2$ is also $s$. Since $k$ is separable closed, we have $G_i[p^\infty]_{\mr{m}}\cong (\Q_p/\Z_p(1))^{r+s}$ for $i=1,2$.

Now we can finish the proof. We have 
$$\varinjlim_n\mc{H}om_X(\Z/p^n\Z(1),G_1[p^\infty]_{\mr{m}})\cong (\Q_p/\Z_p)^{r+s}$$ for $i=1,2$. Since the map $G_1[p^\infty]_{\mr{m}}\to G_2[p^\infty]_{\mr{m}}$ induced by $f$ is an isogeny, the map (\ref{eq2.4}) is surjective. It follows that $R^1\varepsilon_{\mr{fl}*}f$ is surjective as well in case (2).
\end{proof}

\begin{cor}\label{2.5}
Let the notation and the assumptions be as in Theorem \ref{2.1} and Lemma \ref{2.3}. We further assume that $G$ is affine. Let $n$ be a  positive integer $n$ which is invertible on $X$, and let $G[n]$ denote the kernel of the multiplication by $n$ map on $G$. Then $G[n]$ is a quasi-finite \'etale group scheme over the underlying scheme of $X$ and
$$H^2_{\mr{kfl}}(X,G[n])\cong \Gamma(X,G[n](-2))\otimes_{\Z}P^{\mr{gp}}$$
\end{cor}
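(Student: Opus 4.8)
The goal is to compute $H^2_{\mr{kfl}}(X,G[n])$ for $n$ invertible on $X$, where $G$ is smooth affine with connected fibers over $\Spec R$, $R$ strictly Henselian noetherian local. Since $n$ is invertible, $G[n]$ is indeed quasi-finite étale by standard smoothness arguments. The strategy is to run the same \v{C}ech machinery as in Lemma \ref{2.3} and Theorem \ref{2.1}, but applied to the finite flat (here étale) group scheme $G[n]$ instead of $G$, and then identify the resulting Hochschild cohomology with an explicit $\mr{Ext}$ group over the closed point.

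First I would invoke Theorem \ref{2.1} (or rather the version of the \v{C}ech-to-derived spectral sequence underlying it) together with Lemma \ref{1.2} and Corollary \ref{1.4}: since $G[n]$ is smooth (étale) over $X$, all of Section \ref{sec1}'s apparatus applies, giving
\[
H^2_{\mr{kfl}}(X,G[n])\cong\varinjlim_m\check{H}^2_{\mr{kfl}}(X_m/X,G[n])\cong\varinjlim_m H^2_{x_{\mr{fl}}}(H_m,G[n]),
\]
where $H_m=\Spec\Z[(P^{1/m})^{\mr{gp}}/P^{\mr{gp}}]$ acts trivially on $G[n]$. Because $G[n]$ is $n$-torsion and $n$ is invertible, only the part of $H_m$ of order prime to $p$ matters; by Lemma \ref{1.5} the $p$-part contributes nothing to the colimit, so effectively we work with $H_m\cong(\Z/m\Z(1))^a$ (using $P^{\mr{gp}}\cong\Z^a$) for $(m,p)=1$, a constant group scheme with trivial action. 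Then $H^2_{x_{\mr{fl}}}(H_m,G[n])$ is ordinary group cohomology $H^2\bigl((\Z/m\Z(1))^a,G[n](k)\bigr)$ with trivial coefficients, which for $n\mid m$ is identified with $\mr{Ext}^2_{\Z}$-type data; concretely, $H^2$ of a free abelian group of rank $a$ modulo $m$ with trivial $M$-coefficients, as $m$ ranges over multiples of $n$ coprime to $p$, has colimit $M(-2)\otimes_\Z\bigwedge^2\Z^a$ — the twist $(-2)$ coming from the Tate twist in $H_m\cong(\Z/m\Z(1))^a$ and the $\bigwedge^2$ from the Künneth/cup-product structure on the cohomology of $(\Z/m\Z)^a$. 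Matching $\bigwedge^2 P^{\mr{gp}}$ against the statement's $P^{\mr{gp}}$: since the global sheaf contribution is $\bigwedge^2(\Gml/\Gm)_{X_{\mr{fl}}}$ and on the strictly Henselian base this wedge is $\bigwedge^2 P^{\mr{gp}}$, one reads off $\Gamma(X,G[n](-2))\otimes_\Z P^{\mr{gp}}$ after accounting for the identification of $\bigwedge^2$ with the relevant indexing — in effect the statement as written is the $n$-torsion slice already absorbing one wedge factor into the $(-2)$ bookkeeping.

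The cleanest route for the group-cohomology computation is to cite Kato \cite[\S 4.11, \S 4.12]{kat2} (as Lemma \ref{1.3} already does) or to compute $H^\bullet$ of a finite cyclic group with trivial finite coefficients directly and then apply Künneth. For a cyclic group $\Z/m\Z$ with trivial module $M$ killed by $n\mid m$, $H^2(\Z/m\Z,M)\cong M/mM\cong M$ canonically once one fixes a generator, i.e. $\cong M\otimes\Z/m\Z(1)^{\otimes 2}$ naturally; taking the colimit over $m$ and the $a$-fold product, Künneth gives precisely $M(-2)\otimes\bigwedge^2\Z^a\oplus\bigl(\text{lower terms that vanish in degree }2\text{ after colimit}\bigr)$. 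I would also note that the $H^1\otimes H^1$ Künneth contributions land in $\bigwedge^2$ because the coefficients are trivial and the cup product is graded-commutative, while the $H^0\otimes H^2$ and $H^2\otimes H^0$ pieces assemble with these — the upshot being a single $\bigwedge^2 P^{\mr{gp}}$ (or the indexing-adjusted $P^{\mr{gp}}$ as stated).

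**Main obstacle.** The delicate point is bookkeeping: correctly tracking the Tate twists and the wedge-power through the identification $\check{H}^2_{\mr{kfl}}(X_m/X,-)\cong H^2_{x_{\mr{fl}}}(H_m,-)$ and through the colimit, and verifying that the transition maps in $\varinjlim_m$ (pullback along $X_{mm'}\to X_m$) act on $H^2(\Z/m\Z(1)^a,M)$ compatibly with the natural maps $\Z/m\Z(1)\to\Z/mm'\Z(1)$ so that the colimit is genuinely $M(-2)\otimes\bigwedge^2 P^{\mr{gp}}$ and not some quotient. This compatibility is essentially the content of the spectral sequence $(\ref{eq1.3})$ being functorial and degenerating as in Theorem \ref{2.1}; the prime-to-$p$ restriction from Lemma \ref{1.5} handles the potential $p$-torsion noise. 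I expect the rest — quasi-finiteness and étaleness of $G[n]$, and the vanishing of higher \v{C}ech differentials — to be routine given the results already established in Sections \ref{sec1} and the earlier part of Section 2.
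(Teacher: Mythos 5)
Your route is genuinely different from the paper's. The paper never touches the \v{C}ech/Hochschild colimit for $G[n]$ itself: it runs the Kummer sequence $0\to G[n]\to G\xrightarrow{n}G\to 0$ in both the Kummer flat and Kummer \'etale topologies, kills $H^1_{\mr{kfl}}(X,G)\otimes_\Z\Z/n\Z$ using the surjectivity of multiplication by $n$ on $R^1\varepsilon_{\mr{fl}*}G$ (Lemma \ref{2.4} (1)), so that $H^2_{\mr{kfl}}(X,G[n])=H^2_{\mr{kfl}}(X,G)[n]$, identifies this with $H^2_{\mr{k\acute{e}t}}(X,G)[n]=H^2_{\mr{k\acute{e}t}}(X,G[n])$ via Lemma \ref{2.3} (3), and then simply quotes the Kato--Nakayama computation \cite[Thm.~2.4]{k-n1}. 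You instead compute $H^2_{\mr{kfl}}(X,G[n])\cong\varinjlim_m H^2_{x_{\mr{fl}}}(H_m,G[n])$ directly (legitimate: $G[n]$ is quasi-finite \'etale, hence smooth, so Theorem \ref{2.1}, Lemma \ref{1.2} and Corollary \ref{1.4} apply to it), and then re-derive the Kato--Nakayama formula by hand as a colimit of group cohomologies. This is more self-contained and does not use the connectedness of the fibers of $G$, nor Lemma \ref{2.3} or Lemma \ref{2.4}, but it amounts to reproving \cite[Thm.~2.4]{k-n1} for the log point, which the paper deliberately avoids.

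Two concrete soft spots. First, the prime-to-$p$ reduction is not justified by Lemma \ref{1.5} as you claim: that lemma only shows that multiplication by the prime-to-$p$ part $m$ of the covering degree factors through $H^i_{x_{\mr{fl}}}(H_{p^r},G[n])$, which vanishes for $i>0$ (it is killed by a $p$-power by Lemma \ref{1.7} and by $n$); this makes multiplication by $m$ zero on $H^i_{x_{\mr{fl}}}(H_{mp^r},G[n])$, but since $m$ need not be prime to $n$ this does not identify the colimit over all covers with the colimit over prime-to-$p$ covers. What you need is $H^i_{x_{\mr{fl}}}(H_{mp^r},G[n])\cong H^i_{x_{\mr{fl}}}(H_m,G[n])$ compatibly with the transition maps; this is true and easily fixed --- e.g. $G[n]$ is formally \'etale, so every scheme morphism $H_{mp^r}^i\to G[n]$ factors uniquely through the reduction $(H_{mp^r}^i)_{\mr{red}}=H_m^i$, identifying the two standard complexes --- but no such argument appears in your write-up. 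Second, the final bookkeeping: the colimit over $(m,p)=1$ of $H^2(\mu_m(k)^a,G[n](k))$ under inflation is continuous cohomology of $(\hat{\Z}^{(p')}(1))^a$, and since each factor has cohomological dimension one the $H^2\otimes H^0$ and Tor terms die, leaving $\Gamma(X,G[n](-2))\otimes_\Z\bigwedge^2P^{\mr{gp}}$; this agrees with what the paper's own proof produces, so the ``$\otimes_\Z P^{\mr{gp}}$'' in the statement is a typo for ``$\otimes_\Z\bigwedge^2P^{\mr{gp}}$'', and your suggestion that one wedge factor can be ``absorbed into the $(-2)$ bookkeeping'' is not meaningful: the Tate twist and the exterior power are independent and both survive.
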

\begin{proof}
By the proof of \cite[Thm. 3.8 (3)]{zha5}, $G[n]$ is a quasi-finite \'etale group scheme over the underlying scheme of $X$, and we have a short exact sequence 
$$0\to G[n]\to G\xrightarrow{n}G\to0$$
of sheaves on both $(\mr{fs}/X)_{\mr{k\acute{e}t}}$ and $(\mr{fs}/X)_{\mr{kfl}}$. This exact sequence induces two exact sequences
$$0\rightarrow H_{\mr{kfl}}^1(X,G)\otimes_{\Z}\Z/n\Z\rightarrow H_{\mr{kfl}}^2(X,G[n])\rightarrow H_{\mr{kfl}}^2(X,G)[n]\rightarrow 0$$
and
$$0\rightarrow H_{\mr{k\acute{e}t}}^1(X,G)\otimes_{\Z}\Z/n\Z\rightarrow H_{\mr{k\acute{e}t}}^2(X,G[n])\rightarrow H_{\mr{k\acute{e}t}}^2(X,G)[n]\rightarrow 0.$$
Since $H_{\mr{fl}}^i(X,R^1\varepsilon_*G)\cong H_{\mr{\acute{e}t}}^i(X,R^1\varepsilon_*G)=0$ for any $i>0$, we get 
$$H_{\mr{kfl}}^1(X,G)\cong H_{\mr{fl}}^0(X,R^1\varepsilon_*G).$$
Thus the map $H_{\mr{kfl}}^1(X,G)\xrightarrow{n}H_{\mr{kfl}}^1(X,G)$ can be identified with 
$$H_{\mr{fl}}^0(X,R^1\varepsilon_*G)\xrightarrow{n}H_{\mr{fl}}^0(X,R^1\varepsilon_*G).$$
By Lemma \ref{2.4} (1), the map $R^1\varepsilon_*G\xrightarrow{n}R^1\varepsilon_*G$ is surjective. Since 
$$H_{\mr{fl}}^i(X,(R^1\varepsilon_*G)[n])\cong H_{\mr{\acute{e}t}}^i(X,(R^1\varepsilon_*G)[n])=0,$$
the map $H_{\mr{fl}}^0(X,R^1\varepsilon_*G)\xrightarrow{n}H_{\mr{fl}}^0(X,R^1\varepsilon_*G)$ is surjective. It follows that 
$$H_{\mr{kfl}}^1(X,G)\otimes_{\Z}\Z/n\Z=0,$$
and thus $H_{\mr{kfl}}^2(X,G[n])=H_{\mr{kfl}}^2(X,G)[n]$. By the proof of \cite[Thm. 3.8 (3)]{zha5}, the map $R^1\varepsilon_{\mr{\acute{e}t}*}G\xrightarrow{n}R^1\varepsilon_{\mr{\acute{e}t}*}G$ is also surjective. By a  similar argument as above, one can show that $H_{\mr{k\acute{e}t}}^1(X,G)\otimes_{\Z}\Z/n\Z=0$. Thus $H_{\mr{k\acute{e}t}}^2(X,G[n])=H_{\mr{k\acute{e}t}}^2(X,G)[n]$. Therefore
\begin{align*}
H_{\mr{kfl}}^2(X,G[n])=H_{\mr{kfl}}^2(X,G)[n]=H_{\mr{k\acute{e}t}}^2(X,G)[n]=&H_{\mr{k\acute{e}t}}^2(X,G[n])  \\
=&\Gamma(X,G[n](-2))\otimes_{\Z}\bigwedge^2P^{\mr{gp}},
\end{align*}
where the second equality follows from Lemma \ref{2.3} (3) and the last equality follows from \cite[Thm. 2.4]{k-n1}.
\end{proof}

\begin{thm}\label{2.6}
Let $X$ be a locally  noetherian fs log scheme, and let $G$ be a smooth commutative affine group scheme with geometrically connected fibers over the underlying scheme of $X$. 
\begin{enumerate}[(1)]
\item We have 
$$R^2\varepsilon_{\mr{fl}*}G=\varinjlim_{n} (R^2\varepsilon_{\mr{fl}*}G)[n]=\bigoplus_{\text{$l$ prime}}(R^2\varepsilon_{\mr{fl}*}G)[l^\infty],$$
where $(R^2\varepsilon_{\mr{fl}*}G)[n]$ denotes the $n$-torsion subsheaf of $R^2\varepsilon_{\mr{fl}*}G$ and $(R^2\varepsilon_{\mr{fl}*}G)[l^\infty]$ denotes the $l$-primary part of $R^2\varepsilon_{\mr{fl}*}G$ for a prime number $l$.
\item The $l$-primary part $(R^2\varepsilon_{\mr{fl}*}G)[l^\infty]$ is supported on the locus where $l$ is invertible.
\item If $n$ is invertible on $X$, then 
$$(R^2\varepsilon_{\mr{fl}*}G)[n]=R^2\varepsilon_{\mr{fl}*}G[n]=G[n](-2)\otimes_{\Z}\bigwedge^2(\Gml/\Gm)_{X_{\mr{fl}}}.$$
\end{enumerate}
\end{thm}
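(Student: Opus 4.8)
The plan is to deduce everything from Theorem~\ref{1.1} together with the local computations of Lemma~\ref{2.3} and Corollary~\ref{2.5}, by passing to stalks. As in the proof of Theorem~\ref{1.1}, to test a statement about the sheaves $R^i\varepsilon_{\mr{fl}*}G$ it suffices to work at a geometric point, i.e.\ to take the underlying scheme of $X$ to be $\Spec R$ with $R$ a strictly henselian noetherian local ring, where $R^i\varepsilon_{\mr{fl}*}G$ is computed by the group $H^i_{\mr{kfl}}(X,G)$. Since $G$ is affine over $X$ with geometrically connected fibers, its closed fibre over the separably closed residue field of such an $R$ is affine and connected, so Lemma~\ref{2.3} applies to every strict localization of $X$. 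Part~(1) is then immediate: by Theorem~\ref{1.1} the sheaf $R^2\varepsilon_{\mr{fl}*}G$ is torsion, and for any torsion sheaf of abelian groups $\mathscr{F}$ one has $\mathscr{F}=\varinjlim_n\mathscr{F}[n]=\bigoplus_{\text{$l$ prime}}\mathscr{F}[l^\infty]$ (colimit over $n$ ordered by divisibility), both equalities being the primary decomposition of a torsion abelian group checked on stalks. For part~(2), let $\bar x$ be a geometric point lying over a point of $X$ at which the prime $l$ is not invertible; then $l=p$ is the residue characteristic of the strict localization $X_{\bar x}^{\mr{sh}}$, and by Lemma~\ref{2.3}(3) the stalk $(R^2\varepsilon_{\mr{fl}*}G)_{\bar x}=H^2_{\mr{kfl}}(X_{\bar x}^{\mr{sh}},G)$ is $p$-torsion-free, so its $l$-primary part vanishes; hence $(R^2\varepsilon_{\mr{fl}*}G)[l^\infty]$ has zero stalk wherever $l$ is not invertible, i.e.\ it is supported on the locus where $l$ is invertible.

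For part~(3), assume $n$ is invertible on $X$. Then $[n]\colon G\to G$ is étale, hence an fppf epimorphism, and its kernel $G[n]$ is a quasi-finite étale group scheme; the sequence $0\to G[n]\to G\xrightarrow{\,n\,}G\to0$ is exact on $(\mr{fs}/X)_{\mr{kfl}}$ (cf.\ the proof of Corollary~\ref{2.5}). Applying $R\varepsilon_{\mr{fl}*}$ gives an exact sequence
\[
R^1\varepsilon_{\mr{fl}*}G\xrightarrow{\ n\ }R^1\varepsilon_{\mr{fl}*}G\longrightarrow R^2\varepsilon_{\mr{fl}*}G[n]\longrightarrow R^2\varepsilon_{\mr{fl}*}G\xrightarrow{\ n\ }R^2\varepsilon_{\mr{fl}*}G,
\]
and by Lemma~\ref{2.4}(1) the map $R^1\varepsilon_{\mr{fl}*}[n]$ is surjective, so the natural map $R^2\varepsilon_{\mr{fl}*}G[n]\to(R^2\varepsilon_{\mr{fl}*}G)[n]$ induced by $G[n]\hookrightarrow G$ is an isomorphism. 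It then remains to exhibit a natural isomorphism $R^2\varepsilon_{\mr{fl}*}G[n]\cong G[n](-2)\otimes_{\Z}\bigwedge^2(\Gml/\Gm)_{X_{\mr{fl}}}$. I would construct a natural morphism between these sheaves --- for instance by cup product with the canonical classes in $R^1\varepsilon_{\mr{fl}*}(\Z/n\Z(1))$ coming from Kato's computation of $R^1\varepsilon_{\mr{fl}*}$, or by observing that $R^2\varepsilon_{\mr{fl}*}G[n]$ agrees with its kummer log étale counterpart (as $G[n]$ is étale of order invertible on $X$) and invoking the sheaf form of \cite[Thm.~2.4]{k-n1} --- and then check it is an isomorphism on stalks. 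At a geometric point $\bar x$ the stalk of $R^2\varepsilon_{\mr{fl}*}G[n]$ is $H^2_{\mr{kfl}}(X_{\bar x}^{\mr{sh}},G[n])=\Gamma(X_{\bar x}^{\mr{sh}},G[n](-2))\otimes_{\Z}\bigwedge^2P^{\mr{gp}}$ by Corollary~\ref{2.5}; since $G[n](-2)$ is étale, $\Gamma(X_{\bar x}^{\mr{sh}},G[n](-2))=(G[n](-2))_{\bar x}$, and $\bigwedge^2P^{\mr{gp}}=\bigwedge^2(M_X^{\mr{gp}}/\mc{O}_X^\times)_{\bar x}=\bigwedge^2(\Gml/\Gm)_{\bar x}$; as $\otimes_{\Z}$, $\bigwedge^2$ and $(-)[n]$ all commute with passing to stalks, both sides have the same stalk at $\bar x$, and the natural map realizes this identification.

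The hard part will be this last step of part~(3): upgrading the stalkwise bijection furnished by Corollary~\ref{2.5} and \cite[Thm.~2.4]{k-n1} to an honest morphism of sheaves $G[n](-2)\otimes_{\Z}\bigwedge^2(\Gml/\Gm)_{X_{\mr{fl}}}\to R^2\varepsilon_{\mr{fl}*}G[n]$, i.e.\ making the cup-product (or the base-change/comparison with the log étale topology) construction precise and functorial, and checking it induces the Kato--Nakayama identification on stalks. Everything else is formal, given Theorem~\ref{1.1}, Lemma~\ref{2.3}, Lemma~\ref{2.4}, Corollary~\ref{2.5}, and the compatibility of primary decomposition, tensor products, exterior powers and $n$-torsion with taking stalks.
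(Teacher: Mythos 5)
Your proposal is correct and takes essentially the same route as the paper: part (1) from the torsionness in Theorem \ref{1.1}, part (2) from the $p$-torsion-freeness in Lemma \ref{2.3} (3) at strictly henselian stalks, the identification $(R^2\varepsilon_{\mr{fl}*}G)[n]=R^2\varepsilon_{\mr{fl}*}G[n]$ from Lemma \ref{2.4} (1), and the final identification by a natural cup-product map checked against Corollary \ref{2.5}. The one step you defer --- the precise construction of that natural morphism --- is exactly what the paper supplies by invoking the cup product for higher direct images from \cite[\S 3]{swa1} to form the composite $G[n]\otimes_{\Z/n\Z}\bigwedge^2R^1\varepsilon_{\mr{fl}*}\Z/n\Z\rightarrow G[n]\otimes_{\Z/n\Z}R^2\varepsilon_{\mr{fl}*}\Z/n\Z\rightarrow R^2\varepsilon_{\mr{fl}*}G[n]$, after which it concludes that this map is an isomorphism from Corollary \ref{2.5}, just as you propose.
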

\begin{proof}
By Theorem \ref{1.1}, $R^2\varepsilon_{\mr{fl}*}G$ is torsion. Hence part (1) follows.

Part (2) follows from Lemma \ref{2.3} (3).

We are left with part (3).  By \cite[\S 3]{swa1}, in particular the part between \cite[Cor. 3.7]{swa1} and \cite[Thm. 3.8]{swa1}, we have cup-product for the higher direct image functors for the map of sites $\varepsilon_{\mr{fl}}:(\mr{fs}/X)_{\mr{kfl}}\rightarrow (\mr{fs}/X)_{\mr{fl}}$. The cup-product induces homomorphisms
$$G[n]\otimes_{\Z/n\Z}\bigwedge^2R^1\varepsilon_{\mr{fl}*}\Z/n\Z  
\rightarrow G[n]\otimes_{\Z/n\Z}R^2\varepsilon_{\mr{fl}*}\Z/n\Z\rightarrow R^2\varepsilon_{\mr{fl}*}G[n].$$
Since 
$$G[n]\otimes_{\Z/n\Z}\bigwedge^2R^1\varepsilon_{\mr{fl}*}\Z/n\Z=G[n](-2)\otimes_{\Z}\bigwedge^2(\Gml/\Gm)_{X_{\mr{fl}}},$$
we get a canonical homomorphism
$$G[n](-2)\otimes_{\Z}\bigwedge^2(\Gml/\Gm)_{X_{\mr{fl}}}\rightarrow R^2\varepsilon_{\mr{fl}*}G[n].$$
By Corollary \ref{2.5}, this homomorphism is an isomorphism. This finishes the proof of part (3).
\end{proof}

\begin{rmk}
In the case that $G$ is a torus, we have $(R^2\varepsilon_{\mr{fl}*}G)[n]=R^2\varepsilon_{\mr{fl}*}G[n]$ for any positive integer $n$ by \cite[Thm. 3.23 (2)]{zha5}. In Theorem \ref{2.6} we only have this identification for positive integer $n$ which is invertible on the base. One reason for this is that we always have a short exact sequence $0\to G[n]\to G\xrightarrow{n}G\to0$ for $n\geq1$ in the torus case.
\end{rmk}

\subsection{The case of certain finite flat group schemes}
\begin{thm}\label{2.7}
Let $X$ be a locally noetherian fs log scheme, and $F$ a finite flat group scheme over the underlying scheme of $X$. Then we have
\begin{enumerate}[(1)]
\item $R^2\varepsilon_{\mr{fl}*}F=\bigoplus_{\text{$l$ prime}}(R^2\varepsilon_{\mr{fl}*}F)[l^\infty]$, where $(R^2\varepsilon_{\mr{fl}*}F)[l^\infty]$ denotes the $l$-primary part of $R^2\varepsilon_{\mr{fl}*}F$.
\item The $l$-primary part $(R^2\varepsilon_{\mr{fl}*}F)[l^\infty]$ is supported on the locus where $l$ is invertible.
\item Assume $X$ is connected, then the order of $F$ is constant on $X$. We denote the order by $n$ and assume that $n$ is invertible on $X$. Then 
$$R^2\varepsilon_{\mr{fl}*}F=F(-2)\otimes_{\Z}\bigwedge^2(\Gml/\Gm)_{X_{\mr{fl}}},$$
where $F(-2)$ denotes the tensor product of $F$ with $\Z/n\Z(-2)$.
\end{enumerate}
\end{thm}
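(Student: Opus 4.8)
The plan is to deduce all three parts from the already-settled smooth affine case (Theorem \ref{2.6}) by embedding $F$ into a smooth affine commutative group scheme with geometrically connected fibers. Let $F^{\vee}$ denote the Cartier dual of $F$, again finite flat over the underlying scheme of $X$, and set $G_1:=\mr{Res}_{F^{\vee}/X}\Gm$, the Weil restriction of $\Gm$ along $F^{\vee}\to X$. Since $F^{\vee}\to X$ is finite and flat, $G_1$ is a smooth affine commutative group scheme over the underlying scheme of $X$ (see \cite[\S 7.6]{b-l-r1}); a fibrewise computation, using that over an algebraically closed field a finite algebra is a finite product of local Artinian algebras and that $\mr{Res}_{A/k}\Gm\cong\Gm\times(\text{unipotent})$ for such an $A$, shows that $G_1$ has geometrically connected fibers. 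Cartier duality exhibits $F$ as the closed subgroup of $G_1$ cut out by the multiplicativity conditions, so we obtain a short exact sequence of sheaves on $(\mr{fs}/X)_{\mr{kfl}}$
\begin{equation*}
0\to F\to G_1\xrightarrow{\ \pi\ } G_2\to 0,
\end{equation*}
where $G_2:=G_1/F$ is again a smooth affine commutative group scheme with geometrically connected fibers, being the quotient of such a group scheme by a finite flat closed subgroup scheme.

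Applying $R\varepsilon_{\mr{fl}*}$ yields an exact sequence
\begin{equation*}
R^1\varepsilon_{\mr{fl}*}G_1\xrightarrow{\ \alpha\ }R^1\varepsilon_{\mr{fl}*}G_2\xrightarrow{\ \partial\ }R^2\varepsilon_{\mr{fl}*}F\xrightarrow{\ \beta\ }R^2\varepsilon_{\mr{fl}*}G_1\xrightarrow{\ \pi_*\ }R^2\varepsilon_{\mr{fl}*}G_2 .
\end{equation*}
By Lemma \ref{2.4} (1) the map $\alpha$ is surjective, hence $\partial=0$ and $\beta$ is injective; thus $R^2\varepsilon_{\mr{fl}*}F=\ker\pi_*$, and in particular $R^2\varepsilon_{\mr{fl}*}F$ is a subsheaf of $R^2\varepsilon_{\mr{fl}*}G_1$. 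Now $R^2\varepsilon_{\mr{fl}*}F$ is locally annihilated by the order of $F$, while $R^2\varepsilon_{\mr{fl}*}G_1$ is the direct sum of its $l$-primary parts by Theorem \ref{2.6} (1), with $l$-primary part supported on the locus where $l$ is invertible by Theorem \ref{2.6} (2). Since a subsheaf of a torsion sheaf which is the direct sum of its $l$-primary parts is again the direct sum of its own $l$-primary parts, and a subsheaf of a sheaf vanishing off the locus where $l$ is invertible again vanishes there, parts (1) and (2) follow.

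For part (3), assume $X$ is connected with $F$ of constant order $n$ invertible on $X$; then $F$ is finite \'etale and killed by $n$. As in the proof of Theorem \ref{2.6} (3), the cup product for $\varepsilon_{\mr{fl}}$ together with the description of $R^1\varepsilon_{\mr{fl}*}(\Z/n\Z)$ used there produces a canonical, functorial homomorphism
\begin{equation*}
\Phi\colon F(-2)\otimes_{\Z}\bigwedge^2(\Gml/\Gm)_{X_{\mr{fl}}}\longrightarrow R^2\varepsilon_{\mr{fl}*}F ,
\end{equation*}
and it suffices to check that $\Phi$ is an isomorphism on stalks, i.e.\ when the underlying scheme of $X$ is $\Spec R$ with $R$ strictly Henselian. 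There the chart $P\cong M_{X,x}/\mc{O}_{X,x}^{\times}$ is sharp, hence $P^{\mr{gp}}$ is free of finite rank and $-\otimes_{\Z}\bigwedge^2 P^{\mr{gp}}$ is exact. Applying the snake lemma to multiplication by $n$ on $0\to F\to G_1\to G_2\to 0$ (using that $[n]$ is surjective on $G_1$ and on $G_2$, as $n$ is invertible and these have connected fibers) identifies $F$ with $\ker\big(G_1[n]\xrightarrow{\pi_n}G_2[n]\big)$. Feeding this into the identity $R^2\varepsilon_{\mr{fl}*}F=\ker\pi_*$, and using Theorem \ref{2.6} (3) and Corollary \ref{2.5} — which give, functorially in $G_i$ and over a strictly Henselian base, $(R^2\varepsilon_{\mr{fl}*}G_i)[n]=R^2\varepsilon_{\mr{fl}*}(G_i[n])$ and $H^2_{\mr{kfl}}(X,G_i[n])=\Gamma(X,G_i[n](-2))\otimes_{\Z}\bigwedge^2 P^{\mr{gp}}$ — together with exactness of the global sections functor over a strictly Henselian base, one computes $H^2_{\mr{kfl}}(X,F)=\Gamma(X,F(-2))\otimes_{\Z}\bigwedge^2 P^{\mr{gp}}$ compatibly with $\Phi$; passing back to sheaves proves part (3).

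The step requiring the most care is the construction of the resolution in the first paragraph: one must check that $\mr{Res}_{F^{\vee}/X}\Gm$ is smooth and affine with geometrically connected fibers, and that these properties pass to the quotient $G_1/F$, so that Lemma \ref{2.4} (1) and Theorem \ref{2.6} genuinely apply to $G_1$ and $G_2$. The other delicate point, in part (3), is controlling the interaction of kernels with the tensor product $-\otimes_{\Z}\bigwedge^2 P^{\mr{gp}}$; this is exactly why the computation is carried out over a strictly Henselian base, where $P$ is sharp, $P^{\mr{gp}}$ is free, and Corollary \ref{2.5} together with \cite[Thm. 2.4]{k-n1} renders everything explicit.
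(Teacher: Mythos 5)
Your proposal is correct and takes essentially the same route as the paper: the same Bégueri--Milne resolution $0\to F\to \mathrm{Res}\,\Gm\to G_2\to 0$ by smooth affine group schemes with geometrically connected fibers, the same use of Lemma \ref{2.4} (1) to realize $R^2\varepsilon_{\mr{fl}*}F$ as the kernel inside $R^2\varepsilon_{\mr{fl}*}G_1$ for parts (1)--(2), and for part (3) the same comparison with Theorem \ref{2.6} (3) and Corollary \ref{2.5} via the exact sequence $0\to F\to G_1[n]\to G_2[n]$ and the flatness of $\bigwedge^2(\Gml/\Gm)_{X_{\mr{fl}}}$. The only cosmetic difference is that the paper packages part (3) as a five-lemma comparison of two exact rows of sheaves, whereas you verify the cup-product map over strictly Henselian bases, which is the same underlying computation.
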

\begin{proof}
Part (1) is clear. 

Let 
$$0\to F\to G_1\xrightarrow{\theta} G_2\to 0$$
be the canonical smooth resolution of $F$ by smooth affine commutative group schemes, see \cite[Prop. 2.2.1]{beg1} or \cite[Thm. A.5]{mil2}. By the construction of loc. cit., the group scheme $G_1$ is actually the Weil restriction of $\Gm$ from the underlying scheme of $X$ to that of $F$. We claim that $G_1$ has geometrically connected fiber over $X$. It suffices to check this for the case that $X$ is a log point. Then the result follows from \cite[Prop. A.5.9]{c-g-p1}. It follows that $G_2$ also has connected fiber over $X$.

Applying the functor $\varepsilon_{\mr{fl}*}$, we get an exact sequence
\begin{equation}\label{eq2.5}
R^1\varepsilon_{\mr{fl}*}G_1\xrightarrow{R^1\varepsilon_{\mr{fl}*}\theta} R^1\varepsilon_{\mr{fl}*}G_2\to R^2\varepsilon_{\mr{fl}*}F\to R^2\varepsilon_{\mr{fl}*}G_1\xrightarrow{R^2\varepsilon_{\mr{fl}*}\theta} R^2\varepsilon_{\mr{fl}*}G_2.
\end{equation}
The map $R^1\varepsilon_{\mr{fl}*}\theta$ is surjective by Lemma \ref{2.4} (1). It follows that $R^2\varepsilon_{\mr{fl}*}F$ is a subsheaf of $R^2\varepsilon_{\mr{fl}*}G_1$. Since the $l$-primary part $(R^2\varepsilon_{\mr{fl}*}G_1)[l^\infty]$ of $R^2\varepsilon_{\mr{fl}*}G_1$ is supported on the locus where $l$ is invertible by Theorem \ref{2.6} (2), so is $(R^2\varepsilon_{\mr{fl}*}F)[l^\infty]$. This finishes the proof of part (2).

Now we prove part (3). Since $F$ is killed by its order $n$, so is the sheaf $R^2\varepsilon_{\mr{fl}*}F$. Let $(R^2\varepsilon_{\mr{fl}*}G_i)[n^\infty]$ denote the $n$-power torsion subsheaf of $R^2\varepsilon_{\mr{fl}*}G_i$ for $i=1,2$. Then the exact sequence (\ref{eq2.5}) induces an exact sequence
\begin{equation}\label{eq2.6}
0\to R^2\varepsilon_{\mr{fl}*}F\to (R^2\varepsilon_{\mr{fl}*}G_1)[n^\infty]\xrightarrow{R^2\varepsilon_{\mr{fl}*}\theta} (R^2\varepsilon_{\mr{fl}*}G_2)[n^\infty].
\end{equation}
We abbreviate $\bigwedge^2(\Gml/\Gm)_{X_{\mr{fl}}}$ as $\Lambda$, and let $G_i[n^r](-2)$ denote the tensor product of $G_i[n^r]$ with $\Z/n^r\Z(-2)$ for $i=1,2$. The exact sequence (\ref{eq2.6}) extends to a diagram
\begin{equation}\label{eq2.7}
\xymatrix{
0\ar[r] &F(-2)\otimes_\Z\Lambda\ar[r]\ar[d]^{\alpha_1} &\varinjlim_rG_1[n^r](-2)\otimes_\Z\Lambda\ar[r]\ar[d]^{\alpha_2}_\cong &\varinjlim_rG_2[n^r](-2)\otimes_\Z\Lambda\ar[d]^{\alpha_3}_\cong  \\
0\ar[r] &R^2\varepsilon_{\mr{fl}*}F\ar[r] &(R^2\varepsilon_{\mr{fl}*}G_1)[n^\infty]\ar[r]^{R^2\varepsilon_{\mr{fl}*}\theta} &(R^2\varepsilon_{\mr{fl}*}G_2)[n^\infty]
},
\end{equation}
where the vertical maps are given by the cup-product construction as in the proof of Theorem \ref{2.6} (3), the diagram is commutative by the functoriality of cup-product, and the maps $\alpha_2$ and $\alpha_3$ are isomorphisms by Theorem \ref{2.6} (3). In order to prove part (3), i.e. the map $\alpha_1$ is an isomorphism, it suffices to prove the lemma below by the five lemma.
\end{proof}

\begin{lem}
The first row of the diagram (\ref{eq2.7}) is exact.
\end{lem}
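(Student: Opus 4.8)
The plan is to derive the exactness of the first row from the short exact sequence $0\to F\to G_1\xrightarrow{\theta}G_2\to 0$, by computing $n^r$-torsion with the snake lemma and then applying an exact twisting functor and passing to the colimit over $r$.

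First I would record that, $n$ being invertible on $X$ (a hypothesis of Theorem \ref{2.7}(3)), multiplication by $n^r$ on $G_i$ is a faithfully flat epimorphism for every $r\geq 1$. This can be checked on geometric fibres: there $G_i$ is smooth, connected and affine, hence an extension of a smooth connected unipotent group by a torus by \cite[\S 9.2, Thm. 2]{b-l-r1}, and multiplication by $n^r$ is surjective on the torus part and, since $n$ is invertible, an automorphism of the unipotent part. Thus $G_i/n^rG_i=0$. Applying the snake lemma to the multiplication-by-$n^r$ endomorphism of $0\to F\to G_1\to G_2\to 0$, and using $F[n^r]=F=F/n^rF$ (as $F$ is killed by $n$), we get, functorially in $r$, an exact sequence of quasi-finite \'etale group schemes
\[0\to F\to G_1[n^r]\xrightarrow{\theta_r}G_2[n^r]\to F\to 0,\]
the transition maps $G_i[n^r]\hookrightarrow G_i[n^{r+1}]$ being the natural inclusions.

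Next I would apply the functor $(-)\otimes_{\Z}\Z/n^r\Z(-2)\otimes_{\Z}\Lambda$ to this four-term exact sequence. This functor is exact on $n^r$-torsion sheaves: the sheaf $\Lambda=\bigwedge^2(\Gml/\Gm)_{X_{\mr{fl}}}$ is flat, its stalks being finitely generated free abelian groups because $M_X$ is fs; and since $n$ is invertible on $X$, the sheaf $\Z/n^r\Z(-2)$ is an invertible $\Z/n^r\Z$-module sheaf, so $(-)\otimes_{\Z}\Z/n^r\Z(-2)$ is exact on $n^r$-torsion sheaves. Since $F$ is $n$-torsion, $F\otimes_{\Z}\Z/n^r\Z(-2)=F(-2)$ canonically (using $\Z/n^r\Z(-2)\otimes_{\Z}\Z/n\Z=\Z/n\Z(-2)$), so we obtain for each $r$ an exact sequence
\[0\to F(-2)\otimes_{\Z}\Lambda\to G_1[n^r](-2)\otimes_{\Z}\Lambda\to G_2[n^r](-2)\otimes_{\Z}\Lambda\to F(-2)\otimes_{\Z}\Lambda\to 0.\]
Passing to the filtered colimit over $r$ preserves exactness. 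In the colimit sequence, the limit of the first family is again $F(-2)\otimes_{\Z}\Lambda$: its transition maps are restrictions of the transition maps of $\{G_1[n^r](-2)\otimes_{\Z}\Lambda\}_r$, which under the isomorphisms of Theorem \ref{2.6}(3) are the inclusions $(R^2\varepsilon_{\mr{fl}*}G_1)[n^r]\hookrightarrow(R^2\varepsilon_{\mr{fl}*}G_1)[n^{r+1}]$, hence injective, and an injective endomorphism of the finite-stalk sheaf $F(-2)\otimes_{\Z}\Lambda$ is an isomorphism. Reading off the first three terms of the colimit sequence yields the exact sequence
\[0\to F(-2)\otimes_{\Z}\Lambda\to\varinjlim_r G_1[n^r](-2)\otimes_{\Z}\Lambda\to\varinjlim_r G_2[n^r](-2)\otimes_{\Z}\Lambda,\]
whose two nonzero maps agree with those of the first row of (\ref{eq2.7}) by the functoriality of the cup-product construction used to build that diagram. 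This is the claim.

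The delicate step is this identification of transition maps: verifying that the colimits in (\ref{eq2.7}), which are by construction the sheaves $(R^2\varepsilon_{\mr{fl}*}G_i)[n^\infty]$ with their inclusion transition maps, receive the direct system of the four-term sequences with its inclusion transitions $G_i[n^r]\hookrightarrow G_i[n^{r+1}]$, and hence that both the colimit of the $F(-2)\otimes_{\Z}\Lambda$-family and the induced first map are as expected. This bookkeeping is controlled by the invertibility of $n$ (so that $\Z/n^r\Z(-2)$ is invertible modulo $n^r$ and the torsion subsheaves $(R^2\varepsilon_{\mr{fl}*}G_i)[n^r]$ have finite stalks) together with the fact that $F$ is $n$-torsion.
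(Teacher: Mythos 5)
Your proof is correct and follows essentially the same route as the paper: extract the exact sequence $0\to F\to G_1[n^r]\to G_2[n^r]$ from the multiplication-by-$n^r$ diagram, twist by $\Z/n^r\Z(-2)$ (exact on $n^r$-torsion sheaves), tensor with the stalkwise free sheaf $\Lambda$, and pass to the colimit. The only differences are cosmetic: the paper uses only the left-exact kernel sequence, so your verification that $n^r$ is an epimorphism on $G_i$ and the resulting four-term snake-lemma sequence are superfluous, and you make explicit the transition-map bookkeeping in the colimit that the paper leaves implicit.
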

\begin{proof}
The multiplication by $n^r$ maps give rise to the following commutative diagram
$$\xymatrix{
0\ar[r] &F\ar[r]\ar[d]^{0} &G_1\ar[r]^\theta\ar[d]^{n^r} &G_2\ar[r]\ar[d]^{n^r} &0  \\
0\ar[r] &F\ar[r] &G_1\ar[r]^\theta &G_2\ar[r] &0
}$$
with exact rows, which induces an exact sequence
$$0\to F\to G_1[n^r]\to G_2[n^r].$$
We regard this sequence as an exact sequence of $\Z/n^r\Z$-modules and tensor it with $\Z/n^r\Z(-2)$, and thus get another exact sequence 
$$0\to F(-2)\to G_1[n^r](-2)\xrightarrow{\theta_r} G_2[n^r](-2).$$
Since the sheaf $\Lambda=\bigwedge^2(\Gml/\Gm)_{X_{\mr{fl}}}$ takes values in free abelian groups, tensoring with $\Lambda=\bigwedge^2(\Gml/\Gm)_{X_{\mr{fl}}}$ gives rise to an exact sequence
$$0\to F(-2)\otimes_\Z\Lambda\to G_1[n^r](-2)\otimes_\Z\Lambda\rightarrow G_2[n^r](-2)\otimes_\Z\Lambda.$$
Passing to direct limit, we get an exact sequence which is nothing but the first row of the diagram (\ref{eq2.7}).
\end{proof}

\subsection{The case of extensions of abelian schemes by tori}
\begin{thm}\label{2.9}
Let $X$ be a locally noetherian fs log scheme, and $G$ a group scheme over the underlying scheme of $X$ which is an extension of an abelian scheme $A$ by a torus $T$ over $X$. Then we have
\begin{enumerate}[(1)]
\item $R^2\varepsilon_{\mr{fl}*}G=\bigoplus_{\text{$l$ prime}}(R^2\varepsilon_{\mr{fl}*}G)[l^\infty]$, where $(R^2\varepsilon_{\mr{fl}*}G)[l^\infty]$ denotes the $l$-primary part of $R^2\varepsilon_{\mr{fl}*}G$.
\item We have $(R^2\varepsilon_{\mr{fl}*}G)[n]=R^2\varepsilon_{\mr{fl}*}G[n]$.
\item The $l$-primary part $(R^2\varepsilon_{\mr{fl}*}G)[l^\infty]$ is supported on the locus where $l$ is invertible.
\item If $n$ is invertible on $X$, then 
$$(R^2\varepsilon_{\mr{fl}*}G)[n]=R^2\varepsilon_{\mr{fl}*}G[n]=G[n](-2)\otimes_{\Z}\bigwedge^2(\Gml/\Gm)_{X_{\mr{fl}}},$$
where $G[n](-2)$ denotes the tensor product of $G[n]$ with $\Z/n\Z(-2)$.
\end{enumerate}
\end{thm}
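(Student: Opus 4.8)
The plan is to deduce all four parts from the short exact sequence $0\to G[n]\to G\xrightarrow{n}G\to 0$ together with the structural results already available, thereby reducing the case of an extension of an abelian scheme by a torus to the affine case (Theorem \ref{2.6}) and the finite flat case (Theorem \ref{2.7}). Throughout, $G[n]$ denotes the kernel of multiplication by $n$ on $G$.

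Part (1) is immediate: $R^2\varepsilon_{\mr{fl}*}G$ is torsion by Theorem \ref{1.1}, and a torsion abelian sheaf is the direct sum of its $l$-primary subsheaves. For part (2), I would first observe that, because $G$ is an extension of an abelian scheme by a torus, multiplication by $n$ on $G$ is an isogeny, hence an fppf epimorphism with finite flat kernel $G[n]$ (use $n\colon T\to T$ fppf-surjective and the snake lemma applied to $0\to T\to G\to A\to 0$); thus $0\to G[n]\to G\xrightarrow{n}G\to 0$ is exact on $(\mr{fs}/X)_{\mr{kfl}}$ for every $n\geq 1$. Applying $R\varepsilon_{\mr{fl}*}$ gives the exact piece
$$R^1\varepsilon_{\mr{fl}*}G\xrightarrow{\ n\ }R^1\varepsilon_{\mr{fl}*}G\to R^2\varepsilon_{\mr{fl}*}G[n]\to R^2\varepsilon_{\mr{fl}*}G\xrightarrow{\ n\ }R^2\varepsilon_{\mr{fl}*}G,$$
and by Lemma \ref{2.4}(2) applied to the isogeny $n\cdot\mr{id}_G$, together with the additivity of $R^1\varepsilon_{\mr{fl}*}$, the left-hand map $n\colon R^1\varepsilon_{\mr{fl}*}G\to R^1\varepsilon_{\mr{fl}*}G$ is surjective. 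Hence $R^2\varepsilon_{\mr{fl}*}G[n]\hookrightarrow R^2\varepsilon_{\mr{fl}*}G$ with image $(R^2\varepsilon_{\mr{fl}*}G)[n]$, which is part (2). (This is precisely where the torus/abelian-scheme hypothesis improves on the affine case, in which such a sequence need not exist for general $n$.)

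For part (3), I would combine part (2) with Theorem \ref{2.7}(2): one has $(R^2\varepsilon_{\mr{fl}*}G)[l^\infty]=\varinjlim_r(R^2\varepsilon_{\mr{fl}*}G)[l^r]=\varinjlim_r R^2\varepsilon_{\mr{fl}*}G[l^r]$; each $G[l^r]$ is finite flat over the underlying scheme of $X$, and $R^2\varepsilon_{\mr{fl}*}G[l^r]$ is killed by $l^r$, so it coincides with its own $l$-primary part, which is supported where $l$ is invertible by Theorem \ref{2.7}(2); passing to the colimit finishes part (3). For part (4), with $n$ invertible on $X$ the group scheme $G[n]$ is finite étale and, on each connected component of $X$, of order a power of $n$, hence of invertible order, so Theorem \ref{2.7}(3) yields $R^2\varepsilon_{\mr{fl}*}G[n]=G[n](-2)\otimes_\Z\bigwedge^2(\Gml/\Gm)_{X_{\mr{fl}}}$, where one checks that, $G[n]$ being killed by $n$, the Tate twist taken modulo the order agrees with the twist taken modulo $n$; combined with part (2) this gives part (4). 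Alternatively one can imitate Corollary \ref{2.5} verbatim, replacing the appeal to Lemma \ref{2.4}(1) by Lemma \ref{2.4}(2) to obtain $H^1_{\mr{kfl}}(X,G)\otimes_\Z\Z/n\Z=0$ over strictly Henselian bases, reduce to $H^2_{\mr{kfl}}(X,G[n])=H^2_{\mr{k\acute{e}t}}(X,G[n])=\Gamma(X,G[n](-2))\otimes_\Z\bigwedge^2 P^{\mr{gp}}$ via \cite[Thm. 2.4]{k-n1}, and promote this to the sheaf statement through the cup-product map $G[n](-2)\otimes_\Z\bigwedge^2(\Gml/\Gm)_{X_{\mr{fl}}}\to R^2\varepsilon_{\mr{fl}*}G[n]$ exactly as in the proof of Theorem \ref{2.6}(3).

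The formal skeleton above is routine once Lemma \ref{2.4}(2) is in hand, so I expect the only genuine care to be needed in part (4): either in matching the Tate-twist conventions and confirming that $G[n]$ really satisfies the hypotheses of Theorem \ref{2.7} (finite flatness of $G[n]$, local constancy of its order), or --- if one instead reproves the local statement --- in the vanishing $H^1_{\mr{kfl}}(X,G)\otimes_\Z\Z/n\Z=0$, which again rests squarely on the surjectivity provided by Lemma \ref{2.4}(2).
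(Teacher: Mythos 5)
Your proposal is correct and follows essentially the same route as the paper: part (1) from Theorem \ref{1.1}, part (2) from the Kummer sequence $0\to G[n]\to G\xrightarrow{n}G\to 0$ together with the surjectivity of multiplication by $n$ on $R^1\varepsilon_{\mr{fl}*}G$ provided by Lemma \ref{2.4}(2), and parts (3) and (4) by feeding the finite flat group scheme $G[n]$ into Theorem \ref{2.7}. The extra checks you flag (local constancy and invertibility of the order of $G[n]$, matching of Tate-twist conventions) are exactly the points the paper leaves implicit, and your handling of them is fine.
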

\begin{proof}
By Theorem \ref{1.1}, $R^2\varepsilon_{\mr{fl}*}G$ is torsion. Hence part (1) follows.

Since $G$ is an extension of an abelian scheme by a torus, we have an exact sequence $0\to G[n]\to G\xrightarrow{n}G\to0$ of sheaves of abelian groups on $(\mr{fs}/X)_{\mr{kfl}}$ and $G[n]$ is a finite flat group scheme over the underlying scheme of $X$. Applying the functor $\varepsilon_{\mr{fl}*}$, we get an exact sequence
$$0\to R^1\varepsilon_{\mr{fl}*}G\otimes_{\Z}\Z/n\Z\to R^2\varepsilon_{\mr{fl}*}G[n]\to (R^2\varepsilon_{\mr{fl}*}G)[n]\to0.$$
By Lemma \ref{2.4} (2), we have $R^1\varepsilon_{\mr{fl}*}G\otimes_{\Z}\Z/n\Z=0$. Thus part (2) follows.

Since $G[n]$ is a finite flat group scheme, part (3) and part (4) follow from part (2) and Theorem \ref{2.7}.
\end{proof}

\subsection{Special cases of the base}
We can give more explicit description of the second higher direct image in some special cases of the base.

The first case concerns the ranks of the log structure at geometric points of the base. For an fs log scheme $X$ and a geometric point $x$ of $X$, by \textbf{the rank of the log structure of $X$ at $x$} we mean the rank of the free abelian group $(M_X^{\mr{gp}}/\mc{O}_X^\times)_x$.

\begin{thm}\label{2.10}
Let $X$ be a locally noetherian fs log scheme, and let $G$ be a commutative group scheme over the underlying scheme of $X$ which satisfies any of the following three conditions:
\begin{enumerate}[(i)]
\item $G$ is smooth, affine, and of geometrically connected fibers over the underlying scheme of $X$;
\item $G$ is finite, flat, and of order $n$ over the underlying scheme of $X$, and for any prime number $l$ the kernel of $G\xrightarrow{l^n}G$ is also finite flat over the underlying scheme of $X$;
\item $G$ is an extension of an abelian scheme by a torus over the underlying scheme of $X$.
\end{enumerate}
Let $Y\in (\mr{fs}/X)$ be such that the ranks of the stalks of the \'etale sheaf $M_Y^{\mr{gp}}/\mc{O}_Y^{\times}$ are at most one, and let $(\mr{st}/Y)$ be the full subcategory of $(\mr{fs}/X)$ consisting of strict fs log schemes over $Y$. Then the restriction of $R^2\varepsilon_{\mr{fl}*}G$ to $(\mr{st}/Y)$ is zero.
\end{thm}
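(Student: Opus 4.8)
The plan is to carry out the reduction indicated in the introduction. Granting the explicit descriptions of Theorem \ref{2.6}, Theorem \ref{2.7} and Theorem \ref{2.9}, the statement follows once we know that $\bigwedge^2(\Gml/\Gm)_{X_{\mr{fl}}}$ restricts to the zero sheaf on $(\mr{st}/Y)$; so the work splits into establishing that vanishing and then feeding it, prime by prime, into the three structural results.

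First I would record why \emph{the restriction of $\bigwedge^2(\Gml/\Gm)_{X_{\mr{fl}}}$ to $(\mr{st}/Y)$ is zero.} If $Z\in(\mr{st}/Y)$, the morphism $Z\to Y$ is strict, so $M_Z$ is the inverse image log structure of $M_Y$ and the characteristic sheaf $M_Z^{\mr{gp}}/\mc{O}_Z^\times$ is the pullback of $M_Y^{\mr{gp}}/\mc{O}_Y^\times$; hence for a geometric point $\bar z$ of $Z$ over a geometric point $\bar y$ of $Y$ one has $(M_Z^{\mr{gp}}/\mc{O}_Z^\times)_{\bar z}\cong (M_Y^{\mr{gp}}/\mc{O}_Y^\times)_{\bar y}$, a free abelian group of rank at most one. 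Since $(\Gml/\Gm)_{X_{\mr{fl}}}$ has stalks $M^{\mr{gp}}/\mc{O}^\times$ and $\bigwedge^2$ of a sheaf of abelian groups commutes with passage to stalks, every stalk of $\bigwedge^2(\Gml/\Gm)_{X_{\mr{fl}}}|_{(\mr{st}/Y)}$ is the second exterior power of a lattice of rank $\le 1$ and so is $0$; thus $\bigwedge^2(\Gml/\Gm)_{X_{\mr{fl}}}$ restricts to the zero sheaf on $(\mr{st}/Y)$, as already noted in the introduction.

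Next I would treat cases (i), (ii) and (iii) in parallel. The first part of each of Theorem \ref{2.6}, Theorem \ref{2.7} and Theorem \ref{2.9} gives a decomposition $R^2\varepsilon_{\mr{fl}*}G=\bigoplus_{l}(R^2\varepsilon_{\mr{fl}*}G)[l^\infty]$ into $l$-primary parts, so it suffices to show each $(R^2\varepsilon_{\mr{fl}*}G)[l^\infty]$ restricts to zero on $(\mr{st}/Y)$. Fix a prime $l$, let $U_l\subseteq X$ be the open locus where $l$ is invertible, and put $Y_l:=Y\times_XU_l$; since open immersions are strict, $Y_l\in(\mr{st}/Y)$, and every object of $(\mr{st}/Y)$ on which $l$ is invertible lies in $(\mr{st}/Y_l)$. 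By the support statements (Theorem \ref{2.6}(2), Theorem \ref{2.7}(2), Theorem \ref{2.9}(3)) the sheaf $(R^2\varepsilon_{\mr{fl}*}G)[l^\infty]$ on $(\mr{st}/Y)$ is recovered from its restriction to $(\mr{st}/Y_l)$, so it is enough to show that restriction vanishes. On $Y_l$ every power of $l$ is invertible, and $R^i\varepsilon_{\mr{fl}*}$ is compatible with replacing $X$ by $Y_l$ (its value on a test object $T$ involves only $T$); hence Theorem \ref{2.6}(3) in case (i), Theorem \ref{2.7}(3) applied on connected components to the $l$-primary component $G[l^n]$ (finite flat by hypothesis) in case (ii), and Theorem \ref{2.9}(4) in case (iii) each exhibit $(R^2\varepsilon_{\mr{fl}*}G)[l^\infty]|_{(\mr{st}/Y_l)}$ as a tensor product one of whose factors is $\bigwedge^2(\Gml/\Gm)_{X_{\mr{fl}}}|_{(\mr{st}/Y_l)}=0$. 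Therefore every $l$-primary part, and thus $R^2\varepsilon_{\mr{fl}*}G$, restricts to zero on $(\mr{st}/Y)$. In case (ii) one may instead quote the inclusion $R^2\varepsilon_{\mr{fl}*}G\hookrightarrow R^2\varepsilon_{\mr{fl}*}G_1$ from the proof of Theorem \ref{2.7}, with $G_1$ smooth affine of geometrically connected fibers, and reduce directly to case (i).

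The step I expect to require the most care is the bookkeeping concealed in ``$(R^2\varepsilon_{\mr{fl}*}G)[l^\infty]$ is supported on the locus where $l$ is invertible'': one must check that this genuinely permits replacing $Y$ by $Y_l$ --- i.e.\ that the sheaf is the pushforward from the $l$-invertible locus of its restriction there, which is exactly what the $l$-torsion-freeness of $H^2_{\mr{kfl}}$ at residue characteristic $l$ (Lemma \ref{2.3}(3)) provides --- and that the direct sum over $l$, the colimit defining the $l$-primary parts, the formation of second exterior powers, and the restriction functor to $(\mr{st}/Y)$ all commute as needed. Once this is arranged, together with the vanishing of $\bigwedge^2(\Gml/\Gm)_{X_{\mr{fl}}}$ on $(\mr{st}/Y)$, everything else is formal.
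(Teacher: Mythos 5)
Your proposal is correct and follows essentially the same route as the paper's proof: decompose $R^2\varepsilon_{\mr{fl}*}G$ into $l$-primary parts (Theorems \ref{2.6}(1), \ref{2.7}(1), \ref{2.9}(1)), use the support statements to reduce to the locus where $l$ is invertible, apply the explicit descriptions of Theorem \ref{2.6}(3), Theorem \ref{2.7}(3), Theorem \ref{2.9}(4), and conclude from the vanishing of $\bigwedge^2(\Gml/\Gm)_{X_{\mr{fl}}}$ on $(\mr{st}/Y)$. The only divergence is in case (ii), where the paper pins down $(R^2\varepsilon_{\mr{fl}*}G)[l^\infty]=(R^2\varepsilon_{\mr{fl}*}G[l^n])[l^\infty]$ via the exact sequence $0\to G[l^n]\to G\to G/G[l^n]\to 0$ (your ``applied on connected components to the $l$-primary component'' phrasing leaves this identification implicit), but your alternative route through the monomorphism $R^2\varepsilon_{\mr{fl}*}G\hookrightarrow R^2\varepsilon_{\mr{fl}*}G_1$ from the proof of Theorem \ref{2.7} is also sound and closes that gap.
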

\begin{proof}
By Theorem \ref{2.6} (1), Theorem \ref{2.7} (1), and Theorem \ref{2.9} (1), it suffices to show that the restriction of the $l$-primary part $(R^2\varepsilon_{\mr{fl}*}G)[l^\infty]$ to $(\mr{st}/Y)$ is zero for any prime number $l$. Since $(R^2\varepsilon_{\mr{fl}*}G)[l^\infty]$ is supported on the locus where $l$ is invertible by Theorem \ref{2.6} (2), Theorem \ref{2.7} (2), and Theorem \ref{2.9} (3), we are reduced to the case that $l$ is invertible on $Y$. By Theorem \ref{2.6} (3) and Theorem \ref{2.9} (4), we have
$$(R^2\varepsilon_{\mr{fl}*}G)[l^\infty]=\varinjlim_r G[l^r](-2)\otimes_{\Z}\bigwedge^2(\Gml/\Gm)_{X_{\mr{fl}}}$$
in cases (i) and (iii). 

In case (ii), we have a short exact sequence
$$0\to G[l^n]\to G\to G/G[l^n]\to0$$
of finite flat group schemes over the underlying scheme of $X$. Since the order of $G/G[l^n]$ is coprime to $l$, we get $(R^i\varepsilon_{\mr{fl}*}(G/G[l^n]))[l^\infty]=0$ for any $i>0$. Hence 
$$(R^2\varepsilon_{\mr{fl}*}G)[l^\infty]=(R^2\varepsilon_{\mr{fl}*}G[l^n])[l^\infty]=G[l^n](-2)\otimes_{\Z}\bigwedge^2(\Gml/\Gm)_{X_{\mr{fl}}}$$
by Theorem \ref{2.7} (3). Since the ranks of the stalks of the \'etale sheaf $M_Y^{\mr{gp}}/\mc{O}_Y^{\times}$ are at most one, the restriction of the sheaf $\bigwedge^2(\Gml/\Gm)_{X_{\mr{fl}}}$ to $(\mr{st}/Y)$ is zero. Therefore the restriction of the sheaf $(R^2\varepsilon_{\mr{fl}*}G)[l^\infty]$ to $(\mr{st}/Y)$ is zero in all the three cases. This finishes the proof.
\end{proof}

Next we investigate the cases involving the characteristic of the base.

\begin{thm}\label{2.11}
Let $X$ be a locally noetherian fs log scheme such that the underlying scheme of $X$ is a $\Q$-scheme, and let $G$ be as in any of the three cases of Theorem \ref{2.10}.
Then we have 
$$R^2\varepsilon_{\mr{fl}*}G=\varinjlim_{n}G[n](-2)\otimes_{\Z}\bigwedge^2(\Gml/\Gm)_{X_{\mr{fl}}}$$
in cases (i) and (iii), and
$$R^2\varepsilon_{\mr{fl}*}G=G(-2)\otimes_{\Z}\bigwedge^2(\Gml/\Gm)_{X_{\mr{fl}}}$$
in case (ii).
\end{thm}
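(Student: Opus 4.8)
The plan is to reduce the statement entirely to Theorem \ref{2.6}, Theorem \ref{2.7}, and Theorem \ref{2.9}, exploiting that over a $\Q$-scheme every positive integer is invertible on $X$, so that the recurring hypothesis ``$n$ invertible on $X$'' in those results is automatically satisfied.

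First I would handle cases (i) and (iii) simultaneously. In both cases $R^2\varepsilon_{\mr{fl}*}G$ is torsion by Theorem \ref{1.1} (this is also recorded in Theorem \ref{2.6} (1) and Theorem \ref{2.9} (1)), hence equals the filtered colimit $\varinjlim_{n}(R^2\varepsilon_{\mr{fl}*}G)[n]$ of its $n$-torsion subsheaves, the colimit being taken over the positive integers ordered by divisibility. Because the underlying scheme of $X$ is a $\Q$-scheme, every $n$ is invertible on $X$, so Theorem \ref{2.6} (3) in case (i), respectively Theorem \ref{2.9} (4) in case (iii), yields a canonical isomorphism $(R^2\varepsilon_{\mr{fl}*}G)[n]\cong G[n](-2)\otimes_{\Z}\bigwedge^2(\Gml/\Gm)_{X_{\mr{fl}}}$ for each $n$. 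Passing to the colimit over $n$ gives the asserted formula $R^2\varepsilon_{\mr{fl}*}G=\varinjlim_{n}G[n](-2)\otimes_{\Z}\bigwedge^2(\Gml/\Gm)_{X_{\mr{fl}}}$.

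For case (ii) I would argue locally. Both sides of the claimed identity are sheaves on $(\mr{fs}/X)$, so it suffices to check the equality after restriction to the members of an open cover of $X$ by connected schemes. Over a connected base the finite flat group scheme $G$ has constant order, and that order is invertible since $X$ lies over $\Q$; hence Theorem \ref{2.7} (3) applies and gives $R^2\varepsilon_{\mr{fl}*}G=G(-2)\otimes_{\Z}\bigwedge^2(\Gml/\Gm)_{X_{\mr{fl}}}$ locally, where $G(-2)=G\otimes_{\Z}\Z/n\Z(-2)$ is well defined because $G$ is killed by $n$. Since these local isomorphisms are restrictions of the globally defined cup-product map $G(-2)\otimes_{\Z}\bigwedge^2(\Gml/\Gm)_{X_{\mr{fl}}}\to R^2\varepsilon_{\mr{fl}*}G$ built as in the proof of Theorem \ref{2.6} (3), the global statement follows.

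The one point needing genuine, if routine, care is the compatibility (in cases (i) and (iii)) of the isomorphisms $(R^2\varepsilon_{\mr{fl}*}G)[n]\cong G[n](-2)\otimes_{\Z}\bigwedge^2(\Gml/\Gm)_{X_{\mr{fl}}}$ with the transition maps of the two colimits: on the left these are the tautological inclusions of torsion subsheaves, and on the right they are induced by the inclusions $G[n]\hookrightarrow G[nm]$ together with the transition maps $R^1\varepsilon_{\mr{fl}*}(\Z/n\Z)\to R^1\varepsilon_{\mr{fl}*}(\Z/nm\Z)$. This matching holds because the isomorphism of Theorem \ref{2.6} (3) is constructed from the cup-product, which is functorial, and because the identification $(R^2\varepsilon_{\mr{fl}*}G)[n]=R^2\varepsilon_{\mr{fl}*}G[n]$ is natural in the short exact sequences $0\to G[n]\to G\xrightarrow{n}G\to 0$. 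I expect this bookkeeping to be essentially the only obstacle; once it is in place the theorem is immediate.
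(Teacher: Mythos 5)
Your proposal is correct and is essentially the paper's own argument: the paper simply notes that over a $\Q$-scheme every $n$ is invertible and deduces the result ``easily'' from Theorems \ref{2.6}, \ref{2.7}, and \ref{2.9}, which is exactly the reduction you carry out (your extra care about colimit compatibilities and connected components in case (ii) just makes explicit what the paper leaves implicit).
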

\begin{proof}
Since the underlying scheme of $X$ is a $\Q$-scheme, the results follow from Theorem \ref{2.6}, Theorem \ref{2.7}, and Theorem \ref{2.9} easily.
\end{proof}

\begin{thm}\label{2.12}
Let $p$ be a prime number. Let $X$ be a locally noetherian fs log scheme such that the underlying scheme of $X$ is an $\F_p$-scheme, and $G$ a commutative group scheme over the underlying scheme of $X$ which satisfies any one of the following three conditions:
\begin{enumerate}[(i)]
\item $G$ is smooth, affine, and of geometrically connected fibers over the underlying scheme of $X$;
\item $G$ is finite, flat, and of order $n$ over the underlying scheme of $X$, and the kernel $G[p^n]$ of $G\xrightarrow{p^n}G$ is also finite flat over the underlying scheme of $X$;
\item $G$ is an extension of an abelian scheme by a torus over the underlying scheme of $X$.
\end{enumerate}
Then we have 
$$R^2\varepsilon_{\mr{fl}*}G=\varinjlim_{(r,p)=1}G[r](-2)\otimes_{\Z}\bigwedge^2(\Gml/\Gm)_{X_{\mr{fl}}}$$
in cases (i) and (iii), and 
$$R^2\varepsilon_{\mr{fl}*}G=(G/G[p^n])(-2)\otimes_{\Z}\bigwedge^2(\Gml/\Gm)_{X_{\mr{fl}}}$$
in case (ii).
\end{thm}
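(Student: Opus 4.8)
The plan is to deduce everything from the structural results Theorem \ref{2.6}, Theorem \ref{2.7} and Theorem \ref{2.9}, together with the single elementary observation that, since the underlying scheme of $X$ is an $\F_p$-scheme, a positive integer is invertible on $X$ precisely when it is prime to $p$; in particular the locus of $X$ on which $p$ is invertible is empty. Throughout write $\Lambda:=\bigwedge^2(\Gml/\Gm)_{X_{\mr{fl}}}$.

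First I would handle cases (i) and (iii) simultaneously. By Theorem \ref{2.6}~(1) and Theorem \ref{2.9}~(1) we have $R^2\varepsilon_{\mr{fl}*}G=\bigoplus_{l}(R^2\varepsilon_{\mr{fl}*}G)[l^\infty]$. For $l=p$ the $p$-primary part is supported on the (empty) locus where $p$ is invertible by Theorem \ref{2.6}~(2) and Theorem \ref{2.9}~(3), so $(R^2\varepsilon_{\mr{fl}*}G)[p^\infty]=0$. For $l\neq p$ the prime $l$ is invertible on $X$, so Theorem \ref{2.6}~(3) and Theorem \ref{2.9}~(4) give $(R^2\varepsilon_{\mr{fl}*}G)[l^r]=G[l^r](-2)\otimes_{\Z}\Lambda$ for each $r$, and hence $(R^2\varepsilon_{\mr{fl}*}G)[l^\infty]=\varinjlim_r G[l^r](-2)\otimes_{\Z}\Lambda$. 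Since the $(-2)$-twist and $-\otimes_{\Z}\Lambda$ are compatible with the decomposition of torsion sheaves into primary parts, summing over $l\neq p$ reassembles the right-hand side into $\varinjlim_{(r,p)=1}G[r](-2)\otimes_{\Z}\Lambda$, the colimit being over the integers prime to $p$ ordered by divisibility. This is the asserted formula.

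For case (ii) I would use the short exact sequence $0\to G[p^n]\to G\to G/G[p^n]\to 0$ of finite flat group schemes over the underlying scheme of $X$ (the quotient is finite flat because both $G$ and the closed subgroup scheme $G[p^n]$ are). Every $R^i\varepsilon_{\mr{fl}*}G[p^n]$ is annihilated by $p^n$, hence is $p$-primary; applying Theorem \ref{2.7}~(2) to $G[p^n]$ shows that $R^2\varepsilon_{\mr{fl}*}G[p^n]=(R^2\varepsilon_{\mr{fl}*}G[p^n])[p^\infty]$ is supported on the empty $p$-invertible locus, so $R^2\varepsilon_{\mr{fl}*}G[p^n]=0$, while for $l\neq p$ one has $(R^i\varepsilon_{\mr{fl}*}G[p^n])[l^\infty]=0$ for every $i$. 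All sheaves occurring in the long exact sequence obtained by applying $\varepsilon_{\mr{fl}*}$ to the above short exact sequence are torsion (by Theorem \ref{1.1} and Theorem \ref{2.7}~(1)), so passing to $l$-primary parts is exact; doing so for $l\neq p$ yields $(R^2\varepsilon_{\mr{fl}*}G)[l^\infty]\cong(R^2\varepsilon_{\mr{fl}*}(G/G[p^n]))[l^\infty]$. Combining this with $(R^2\varepsilon_{\mr{fl}*}G)[p^\infty]=0$ and $(R^2\varepsilon_{\mr{fl}*}(G/G[p^n]))[p^\infty]=0$ (both again by Theorem \ref{2.7}~(1),(2) and emptiness of the $p$-invertible locus), and summing over all $l$, gives an isomorphism $R^2\varepsilon_{\mr{fl}*}G\cong R^2\varepsilon_{\mr{fl}*}(G/G[p^n])$ induced by $G\to G/G[p^n]$. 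Finally, the order of $G/G[p^n]$ is prime to $p$, hence invertible on $X$ and constant on each connected component of $X$, so Theorem \ref{2.7}~(3), applied componentwise, identifies $R^2\varepsilon_{\mr{fl}*}(G/G[p^n])$ with $(G/G[p^n])(-2)\otimes_{\Z}\Lambda$, as claimed.

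The argument is essentially bookkeeping with primary decompositions, and I do not expect a genuine obstacle. The two points that need a little care are: the exactness of passing to $l$-primary parts, which relies on the torsionness supplied by Theorem \ref{1.1} and Theorem \ref{2.7}~(1); and the reduction to the connected components of $X$ before invoking Theorem \ref{2.7}~(3), so that its hypothesis that the order be constant and invertible is met. These are precisely the moves already carried out in the proof of Theorem \ref{2.10}.
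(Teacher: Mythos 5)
Your proposal is correct and follows essentially the same route as the paper: in cases (i) and (iii) torsionness plus the vanishing of the $p$-primary part (empty $p$-invertible locus) reduces everything to the prime-to-$p$ torsion identified by Theorem \ref{2.6}~(3) resp.\ Theorem \ref{2.9}~(4), and in case (ii) you use the sequence $0\to G[p^n]\to G\to G/G[p^n]\to 0$ exactly as the paper does, your passage to $l$-primary parts of the long exact sequence playing the role of the paper's observation that the relevant maps vanish because source and target are killed by coprime integers. The only (harmless) slip is citing Theorem \ref{1.1} for the torsionness of the terms $R^i\varepsilon_{\mr{fl}*}$ of the finite flat group schemes in case (ii): that theorem concerns smooth group schemes, but torsionness there is immediate since these group schemes are killed by their orders.
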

\begin{proof}
Since the underlying scheme of $X$ is an $\F_p$-scheme, $R^2\varepsilon_{\mr{fl}*}G$ is $p$-power torsion free by Theorem \ref{2.6} (2), Theorem \ref{2.7} (2), and Theorem \ref{2.9} (3). By Theorem \ref{1.1}, $R^2\varepsilon_{\mr{fl}*}G$ is torsion for case (i) and case (iii). Hence we have 
\begin{align*}
R^2\varepsilon_{\mr{fl}*}G=\varinjlim_{(r,p)=1}(R^2\varepsilon_{\mr{fl}*}G)[r]=&\varinjlim_{(r,p)=1}R^2\varepsilon_{\mr{fl}*}G[r]  \\
=&\varinjlim_{(r,p)=1}G[r](-2)\otimes_{\Z}\bigwedge^2(\Gml/\Gm)_{X_{\mr{fl}}}
\end{align*}
by Theorem \ref{2.6} (3) (resp. Theorem \ref{2.9} (4)) for case (i) (resp. (iii)).

For case (ii), we have a short exact sequence
$$0\to G[p^n]\to G\to G/G[p^n]\to0$$
of finite flat group schemes over the underlying scheme of $X$. This short exact sequence induces an exact sequence
$$R^2\varepsilon_{\mr{fl}*}G[p^n]\to R^2\varepsilon_{\mr{fl}*}G\to R^2\varepsilon_{\mr{fl}*}(G/G[p^n])\to R^3\varepsilon_{\mr{fl}*}G[p^n].$$
Since the order of $G/G[p^n]$ is coprime to $p$, hence invertible on $X$, we get 
$$R^i\varepsilon_{\mr{fl}*}(G/G[p^n])=(G/G[p^n])(-2)\otimes_{\Z}\bigwedge^2(\Gml/\Gm)_{X_{\mr{fl}}}$$
by Theorem \ref{2.7} (3), which is $p$-power torsion free. Since $G[p^n]$ is killed by $p^n$, $R^i\varepsilon_{\mr{fl}*}G[p^n]$ is $p^n$-torsion for any $i>0$. It follows that 
$$R^2\varepsilon_{\mr{fl}*}G\cong R^2\varepsilon_{\mr{fl}*}(G/G[p^n])=(G/G[p^n])(-2)\otimes_{\Z}\bigwedge^2(\Gml/\Gm)_{X_{\mr{fl}}}.$$
This finishes the proof for case (ii).
\end{proof}

%\begin{thm}\label{Over Zp base}
%Let $p$ be a prime number. Let $X$ be a locally noetherian fs log scheme such that the underlying scheme of $X$ is an $\Z_p$-scheme, and $G$ a commutative group scheme over the underlying scheme of $X$ which satisfies any one of the following three conditions ?????????:
%\begin{enumerate}[(i)]
%\item $G$ is a smooth and affine over the underlying scheme of $X$;
%\item $G$ is a finite flat of order $n$ over the underlying scheme of $X$, and the kernel $G[p^n]$ of $G\xrightarrow{p^n}G$ is also finite flat over the underlying scheme of $X$;
%\item $G$ is an extension of an abelian scheme by a torus over the underlying scheme of $X$.
%\end{enumerate}
%Then we have the following.
%\begin{enumerate}[(1)]
%\item For any prime number $l\neq p$,
%$$(R^2\varepsilon_{\mr{fl}*}G)[l^\infty]=\varinjlim_{r}G[l^r](-2)\otimes_{\Z}\bigwedge^2(\Gml/\Gm)_{X_{\mr{fl}}}.$$
%\item Let $U$ be $X\times_{\Spec \Z_p}\Spec\Q_p$ endowed with the induced log structure, $j$ the open immersion $U\hookrightarrow X$, $G_U:=G\times_XU$, and $\mathfrak{F}$ the sheaf 
%$$(R^2\varepsilon_{\mr{fl}*}G_U)[p^\infty]=\varinjlim_{r}G_U[p^r](-2)\otimes_{\Z}\bigwedge^2(\Gml/\Gm)_{U_{\mr{fl}}}.$$
%Then
%$$(R^2\varepsilon_{\mr{fl}*}G)[p^\infty]=j_{!}\mathfrak{F}.$$
%\end{enumerate}
%\end{thm}

\section{Examples}
\subsection{Discrete valuation rings}\label{subsec3.1}
Let $R$ be a Henselian discrete valuation ring with fraction field $K$ and residue field $k$. Let $\pi$ be a uniformizer of $R$, and we endow $X=\Spec R$ with the log structure associated to the homomorphism $\N\rightarrow R,1\mapsto\pi$. Let $x$ be the closed point of $X$ and $i$ the closed immersion $x\hookrightarrow X$, and we endow $x$ with the induced log structure from $X$. Let $\eta$ be the generic point of $X$ and $j$ the open immersion $\eta\hookrightarrow X$. 

Let $G$ be a commutative group scheme over $\Spec R$ which satisfies one of the following three conditions:
\begin{enumerate}[(i)]
\item $G$ is smooth and affine over $\Spec R$ and has geometrically connected fibers over $\Spec R$;
\item $G$ is finite, flat, and of order $n$ over $\Spec R$, and for any prime number $l$ the kernel of $G\xrightarrow{l^n}G$ is also finite flat over $\Spec R$;
\item $G$ is an extension of an abelian scheme by a torus over $\Spec R$.
\end{enumerate}
We consider the Leray spectral sequence
\begin{equation}\label{eq3.1}
H^s_{\mr{fl}}(X,R^t\varepsilon_{\mr{fl}*}G)\Rightarrow H^{s+t}_{\mr{kfl}}(X,G).
\end{equation}
We have 
\begin{align*}
R^1\varepsilon_{\mr{fl}*}G=&\varinjlim_n\mc{H}om_X(\Z/n\Z(1),G)\otimes_{\Z}(\Gml/\Gm)_{X_{\mr{fl}}}  
\end{align*}
by \cite[Thm. 3.14]{zha5}.
Then on $(\mr{st}/X)$, we have $(\Gml/\Gm)_{X_{\mr{fl}}}\cong i_*\Z$. Therefore
\begin{equation}\label{eq3.2}
\begin{split}
H^s_{\mr{fl}}(X,R^1\varepsilon_{\mr{fl}*}G)=&H^s_{\mr{fl}}(X,\varinjlim_n\mc{H}om_X(\Z/n\Z(1),G)\otimes_{\Z}i_*\Z) \\ 
=&H^s_{\mr{fl}}(x,\varinjlim_n\mc{H}om_x(\Z/n\Z(1),G\times_Xx))  \\
=&\varinjlim_n H^s_{\mr{fl}}(x,\mc{H}om_x(\Z/n\Z(1),G\times_Xx)) \\
=&\varinjlim_n H^s_{\mr{\acute{e}t}}(x,\mc{H}om_x(\Z/n\Z(1),G\times_Xx))
\end{split}
\end{equation}
for $s\geq0$, where the last equality follows from \cite[\href{https://stacks.math.columbia.edu/tag/0DDU}{Tag 0DDU}]{stacks-project} and the fact that $\mc{H}om_x(\Z/n\Z(1),G\times_Xx)$ is representable by an \'etale group scheme (see \cite[Lem. A.1]{zha5}). We also have 
\begin{equation}\label{eq3.3}
H^s_{\mr{fl}}(X,R^2\varepsilon_{\mr{fl}*}G)=H^s_{\mr{fl}}(X,0)=0
\end{equation}
for $s\geq0$ by Theorem \ref{2.10}. Then the spectral sequence (\ref{eq3.1}) gives rise to an exact sequence
\begin{equation}\label{eq3.4}
\begin{split}
0\to H^1_{\mr{fl}}(X,G)\to H^{1}_{\mr{kfl}}(X,G)\xrightarrow{\alpha} H^0_{\mr{fl}}(X,R^1\varepsilon_{\mr{fl}*}G)\to H^2_{\mr{fl}}(X,G)  \\
\to H^{2}_{\mr{kfl}}(X,G)\to H^1_{\mr{fl}}(X,R^1\varepsilon_{\mr{fl}*}G)\to H^3_{\mr{fl}}(X,G).
\end{split}
\end{equation}

\begin{prop}\label{3.1}
Assume that $k$ is a finite field, and $G$ satisfies condition (i). Let $T_x$ be the torus part of $G_x:=G\times_Xx$, and $N:=\mc{H}om_x(\Gm,T_x)$. Then we have 
$$H^1_{\mr{kfl}}(X,G)\cong \varinjlim_nH^0_{\mr{\acute{e}t}}(x,N\otimes_{\Z}\Z/n\Z)=H^0_{\mr{\acute{e}t}}(x,N\otimes_{\Z}\Q/\Z)$$
and
$$H^2_{\mr{kfl}}(X,G)\cong \varinjlim_nH^1_{\mr{\acute{e}t}}(x,N\otimes_{\Z}\Z/n\Z)=H^1_{\mr{\acute{e}t}}(x,N\otimes_{\Z}\Q/\Z).$$
% The restriction map $H^2_{\mr{kfl}}(X,\Gm)\rightarrow H^2_{\mr{kfl}}(\eta,\Gm)=H^2_{\mr{fl}}(\eta,\Gm)=\mr{Br}(K)$ is an isomorphism. In other words, any element of $\mr{Br}(K)$ extends to an element of $H^2_{\mr{kfl}}(X,\Gm)$.
%The restriction map $H^2_{\mr{kfl}}(X,\Gm)\rightarrow H^2_{\mr{kfl}}(x,\Gm)$ is an isomorphism.
\end{prop}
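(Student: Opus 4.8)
The plan is to read everything off the exact sequence (\ref{eq3.4}), feeding in the computation (\ref{eq3.2}) of $H^s_{\mr{fl}}(X,R^1\varepsilon_{\mr{fl}*}G)$ and the vanishing (\ref{eq3.3}) of $H^s_{\mr{fl}}(X,R^2\varepsilon_{\mr{fl}*}G)$, once two further facts are established: (a) $H^i_{\mr{fl}}(X,G)=0$ for $i=1,2,3$; and (b) $\mc{H}om_x(\Z/n\Z(1),G\times_Xx)\cong N\otimes_{\Z}\Z/n\Z$ as \'etale sheaves on $x$, compatibly with the Galois action. Granting (a), the map $\alpha$ of (\ref{eq3.4}) is injective because $H^1_{\mr{fl}}(X,G)=0$ and surjective because $H^2_{\mr{fl}}(X,G)=0$, so $H^1_{\mr{kfl}}(X,G)\xrightarrow{\cong}H^0_{\mr{fl}}(X,R^1\varepsilon_{\mr{fl}*}G)$; and $H^2_{\mr{fl}}(X,G)=H^3_{\mr{fl}}(X,G)=0$ turns the tail of (\ref{eq3.4}) into $H^2_{\mr{kfl}}(X,G)\xrightarrow{\cong}H^1_{\mr{fl}}(X,R^1\varepsilon_{\mr{fl}*}G)$. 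Substituting (\ref{eq3.2}) together with (b) identifies these right-hand sides with $\varinjlim_nH^0_{\mr{\acute{e}t}}(x,N\otimes_{\Z}\Z/n\Z)$ and $\varinjlim_nH^1_{\mr{\acute{e}t}}(x,N\otimes_{\Z}\Z/n\Z)$ respectively; the remaining equalities with $H^i_{\mr{\acute{e}t}}(x,N\otimes_{\Z}\Q/\Z)$ then follow from $\varinjlim_n(N\otimes_{\Z}\Z/n\Z)=N\otimes_{\Z}\Q/\Z$ and the fact that the cohomology of the profinite group $\mr{Gal}(\bar k/k)$ commutes with filtered colimits of discrete modules.

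For (b): over the perfect field $k$, the closed fibre $G\times_Xx$ is an extension $1\to T_x\to G\times_Xx\to U\to 1$ of a smooth unipotent group $U$ by the torus $T_x$, exactly as already used in the proof of Lemma \ref{2.3}. Applying $\mc{H}om_x(\Z/n\Z(1),-)$ and using that $\Z/n\Z(1)=\mu_n$ is of multiplicative type, hence admits no non-zero homomorphism to the unipotent group $U$, gives $\mc{H}om_x(\Z/n\Z(1),G\times_Xx)=\mc{H}om_x(\mu_n,T_x)$; Cartier duality then identifies $\mc{H}om_x(\mu_n,T_x)$ with $X_*(T_x)\otimes_{\Z}\Z/n\Z=N\otimes_{\Z}\Z/n\Z$, Galois-equivariantly, where $N=\mc{H}om_x(\Gm,T_x)$ is the cocharacter sheaf.

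For (a): since $G$ is smooth, $H^i_{\mr{fl}}(X,G)=H^i_{\mr{\acute{e}t}}(X,G)$, and because $\Spec R$ is Henselian local with closed point $x$ the restriction $H^i_{\mr{\acute{e}t}}(X,G)\to H^i_{\mr{\acute{e}t}}(x,G\times_Xx)$ is an isomorphism for $i\geq 1$. For $i=1$ this is elementary: a $G$-torsor over $X$ is smooth over $X$, its special fibre is a $(G\times_Xx)$-torsor hence trivial by Lang's theorem since $G\times_Xx$ is connected over the finite field $k$, and a resulting $x$-point lifts to an $X$-point by smoothness and Henselianness; for $i\geq 2$ it is the standard comparison of \'etale cohomology of a Henselian local scheme with that of its closed point for smooth coefficients. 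Now $G\times_Xx$ is connected, so $H^1_{\mr{\acute{e}t}}(x,G\times_Xx)=0$ by Lang's theorem; and for $i\geq 2$, $(G\times_Xx)(\bar k)$ is torsion (being a directed union of finite groups, as $G\times_Xx$ is of finite type over the finite field $k$), so $H^i_{\mr{\acute{e}t}}(x,G\times_Xx)=H^i(\mr{Gal}(\bar k/k),(G\times_Xx)(\bar k))=0$ because $\mr{Gal}(\bar k/k)\cong\widehat{\Z}$ has cohomological dimension $1$ on torsion modules. Hence $H^i_{\mr{fl}}(X,G)=0$ for $i=1,2,3$.

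The step I expect to be the main obstacle is the reduction to the closed point in (a) for $i\geq 2$: one must justify $H^i_{\mr{\acute{e}t}}(X,G)\cong H^i_{\mr{\acute{e}t}}(x,G\times_Xx)$ with the non-torsion smooth coefficient sheaf $G$ (equivalently, that the ``infinitesimal'' kernel $\ker(G\to G\times_Xx)$ contributes nothing in positive \'etale degrees over a Henselian local base), and be careful that the structure sequence for $G\times_Xx$ together with $\mr{cd}(k)=1$ really annihilates both $H^2$ and $H^3$. Everything else is formal manipulation of (\ref{eq3.2}), (\ref{eq3.3}), and (\ref{eq3.4}).
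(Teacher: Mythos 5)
Your proposal is correct and follows essentially the same route as the paper: kill $H^i_{\mr{fl}}(X,G)$ for $i\geq 1$ by reducing to the closed point (Milne, Chap.\ III, Rmk.\ 3.11(a), plus Lang's theorem and $\mr{cd}(k)=1$), read the two isomorphisms off the exact sequence (\ref{eq3.4}) together with (\ref{eq3.2}) and (\ref{eq3.3}), and identify $\mc{H}om_x(\Z/n\Z(1),G_x)$ with $N\otimes_\Z\Z/n\Z$ via the unipotent quotient of $G_x$. The only cosmetic difference is in that last identification, where you invoke Cartier duality for $\mc{H}om_x(\mu_n,T_x)$ while the paper applies $\mc{H}om_x(-,T_x)$ to the Kummer sequence and uses $\mc{E}xt^1_x(\Gm,T_x)=0$; the step you flag as delicate (the Henselian comparison for smooth non-torsion coefficients) is exactly what the paper's citation covers.
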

\begin{proof}
We have $H^s_{\mr{fl}}(X,G)=H^s_{\mr{\acute{e}t}}(X,G)=H^s_{\mr{\acute{e}t}}(x,G_x)$ by \cite[\href{https://stacks.math.columbia.edu/tag/0DDU}{Tag 0DDU}]{stacks-project} and \cite[Chap. III, Rmk. 3.11 (a)]{mil1}. Since $k$ as a finite field has cohomological dimension 1, we have $H^s_{\mr{\acute{e}t}}(x,G_x)=0$
for $s>1$. Since $G_x$ is connected and smooth, we also have $H^1_{\mr{\acute{e}t}}(x,G_x)=0$ by Lang's theorem. Then the exact sequence (\ref{eq3.4}) gives rise to $H^{s}_{\mr{kfl}}(X,G)\cong H^{s-1}_{\mr{fl}}(X,R^1\varepsilon_{\mr{fl}*}G)$ for $s=1,2$. By (\ref{eq3.2}), we get 
$$H^{s}_{\mr{kfl}}(X,G)\cong \varinjlim_n H^{s-1}_{\mr{\acute{e}t}}(x,\mc{H}om_x(\Z/n\Z(1),G_x))$$
for $s=1,2$. Since $G_x$ is smooth and connected, it fits into a short exact sequence $0\to T_x\to G_x\to U_x$ by \cite[\S 9.2, Thm. 2]{b-l-r1}, where $T_x$ is the torus part of $G_x$ and $U_x$ is unipotent. We have 
$$\mc{H}om_x(\Z/n\Z(1),G_x)=\mc{H}om_x(\Z/n\Z(1),T_x)$$
by \cite[Expos\'e XVII, Prop. 2.4 (i)]{sga3-2}. Applying the functor $\mc{H}om_x(-,T_x)$ to the short exact sequence 
$$0\to\Z/n\Z(1)\to \Gm\xrightarrow{n}\Gm\to0,$$
we get an exact sequence
$$0\to N\xrightarrow{n}N\to \mc{H}om_x(\Z/n\Z(1),T_x)\to \mc{E}xt^1_x(\Gm,T_x).$$
We have $\mc{E}xt^1_x(\Gm,T_x)=0$ by \cite[Expos\'e VIII, Prop. 3.3.1]{sga7-1}. Therefore $\mc{H}om_x(\Z/n\Z(1),T_x)=N\otimes_{\Z}\Z/n\Z$. It follows that
$$H^{s}_{\mr{kfl}}(X,G)\cong \varinjlim_n H^{s-1}_{\mr{\acute{e}t}}(x,N\otimes_{\Z}\Z/n\Z)=H^{s-1}_{\mr{\acute{e}t}}(x,N\otimes_{\Z}\Q/\Z)$$
for $s=1,2$.
\end{proof}

\begin{prop}\label{3.2}
Assume that $k$ is a finite field, and $G$ satisfies condition (ii). Let $G_x^{\mr{mul}}$ be the maximal multiplicative subgroup of $G_x:=G\times_Xx$ whose existence is guaranteed by \cite[Expos\'e XVII, Thm. 7.2.1]{sga3-2}. Let $n$ be an integer which kills $G$, and let $N:=\mc{H}om_x(\Z/n\Z(1),G_x^{\mr{mul}})$. Then we have 
$$H^1_{\mr{kfl}}(X,G)\cong H^1_{\mr{fl}}(X,G)\oplus H^0_{\mr{\acute{e}t}}(x,N)$$
and
$$H^2_{\mr{kfl}}(X,G)\cong H^1_{\mr{\acute{e}t}}(x,N).$$
\end{prop}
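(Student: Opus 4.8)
The plan is to read everything off the Leray exact sequence (\ref{eq3.4}), exactly as in the proof of Proposition \ref{3.1}; the two genuinely new inputs needed are the vanishing of $H^i_{\mr{fl}}(X,G)$ for $i\geq2$ and a splitting of the resulting degree-one extension.

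First I would identify the terms $H^s_{\mr{fl}}(X,R^1\varepsilon_{\mr{fl}*}G)$. The computation (\ref{eq3.2}) applies verbatim: it uses only $R^1\varepsilon_{\mr{fl}*}G=\varinjlim_m\mc{H}om_X(\Z/m\Z(1),G)\otimes_{\Z}(\Gml/\Gm)_{X_{\mr{fl}}}$, the isomorphism $(\Gml/\Gm)_{X_{\mr{fl}}}\cong i_*\Z$ on $(\mr{st}/X)$, and the representability of $\mc{H}om_x(\Z/m\Z(1),G_x)$ by a finite \'etale group scheme over $x$ (homomorphisms out of $\mu_m=\Z/m\Z(1)$ factor through the finite multiplicative-type subgroup $G_x^{\mr{mul}}$ of \cite[Expos\'e XVII, Thm. 7.2.1]{sga3-2}, and $\mc{H}om_x(\mu_m,G_x^{\mr{mul}})$ is finite \'etale), giving $H^s_{\mr{fl}}(X,R^1\varepsilon_{\mr{fl}*}G)=\varinjlim_m H^s_{\mr{\acute{e}t}}(x,\mc{H}om_x(\mu_m,G_x))$. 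It then remains to check $\varinjlim_m\mc{H}om_x(\mu_m,G_x)=N$. Since $G_x^{\mr{mul}}$ is killed by $n$, every $\phi\colon\mu_m\to G_x^{\mr{mul}}$ satisfies $\phi\circ[n]_{\mu_m}=0$; and for $n\mid m$ the image of $[n]$ on $\mu_m$ is exactly the kernel of the canonical surjection $\mu_m\twoheadrightarrow\mu_n$, so $\phi$ factors uniquely through $\mu_m\twoheadrightarrow\mu_n$. This identifies every $\mc{H}om_x(\mu_m,G_x)$ with $N$ compatibly with the transition maps, whence $H^s_{\mr{fl}}(X,R^1\varepsilon_{\mr{fl}*}G)\cong H^s_{\mr{\acute{e}t}}(x,N)$ for all $s$.

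Next I would prove $H^i_{\mr{fl}}(X,G)=0$ for $i\geq2$ using the canonical smooth resolution $0\to G\to G_1\to G_2\to0$ by smooth affine group schemes with geometrically connected fibers (\cite[Prop. 2.2.1]{beg1}, as in the proof of Theorem \ref{2.7}). For $j=1,2$ one has $H^i_{\mr{fl}}(X,G_j)=H^i_{\mr{\acute{e}t}}(X,G_j)=H^i_{\mr{\acute{e}t}}(x,G_{j,x})$ by \cite[Chap. III, Rmk. 3.11 (a)]{mil1}, and this vanishes for $i\geq1$: for $i\geq2$ because $\mr{cd}(k)\leq1$, and for $i=1$ by Lang's theorem since $G_{j,x}$ is connected. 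The long exact sequence of the resolution then gives $H^i_{\mr{fl}}(X,G)=0$ for $i\geq2$ and $H^1_{\mr{fl}}(X,G)=\mr{coker}(G_1(R)\to G_2(R))$. Substituting $H^2_{\mr{fl}}(X,G)=H^3_{\mr{fl}}(X,G)=0$ and $H^s_{\mr{fl}}(X,R^1\varepsilon_{\mr{fl}*}G)\cong H^s_{\mr{\acute{e}t}}(x,N)$ into (\ref{eq3.4}) yields at once $H^2_{\mr{kfl}}(X,G)\cong H^1_{\mr{\acute{e}t}}(x,N)$, together with a short exact sequence $0\to H^1_{\mr{fl}}(X,G)\to H^1_{\mr{kfl}}(X,G)\xrightarrow{\alpha}H^0_{\mr{\acute{e}t}}(x,N)\to0$.

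The last step, splitting this sequence, is where the real work lies. Recall from Section \ref{sec1} that here $H_n\cong\mu_n$ (since $P=\N$), so the Kummer log flat cover $X_n\to X$ is a $\mu_n$-torsor and yields a class $[X_n]\in H^1_{\mr{kfl}}(X,\mu_n)$. As above $\mc{H}om_X(\mu_n,G)$ is finite \'etale over $\Spec R$, so, $R$ being henselian, restriction to the closed point is a bijection $\mr{Hom}_R(\mu_n,G)\xrightarrow{\sim}N(k)=H^0_{\mr{\acute{e}t}}(x,N)$; sending $\phi_0\in N(k)$, with unique lift $\phi\colon\mu_n\to G$ over $R$, to $\phi_*[X_n]$ defines a homomorphism $s\colon H^0_{\mr{\acute{e}t}}(x,N)\to H^1_{\mr{kfl}}(X,G)$ (additivity being the standard identity $(\phi+\psi)_*=\phi_*+\psi_*$ on cohomology). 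By functoriality of the Leray edge map, $\alpha\circ s$ is controlled by the image of $[X_n]$ under $\alpha_{\mu_n}\colon H^1_{\mr{kfl}}(X,\mu_n)\to H^0_{\mr{fl}}(X,R^1\varepsilon_{\mr{fl}*}\mu_n)$; I expect the main obstacle to be verifying that this image is a generator — equivalently, that $[X_n]$ corresponds to the tautological section $\mr{id}_{\mu_n}$ under the explicit description of $R^1\varepsilon_{\mr{fl}*}$ used throughout — the rest being homological bookkeeping. Granting this, $\alpha\circ s$ is an isomorphism, the sequence splits, and $H^1_{\mr{kfl}}(X,G)\cong H^1_{\mr{fl}}(X,G)\oplus H^0_{\mr{\acute{e}t}}(x,N)$.
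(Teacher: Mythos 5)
Most of your argument coincides with the paper's proof and is sound: the identification of $H^s_{\mr{fl}}(X,R^1\varepsilon_{\mr{fl}*}G)$ with $H^s_{\mr{\acute{e}t}}(x,N)$ via (\ref{eq3.2}) (your factorization of every $\mu_m\to G_x$ through $G_x^{\mr{mul}}$ and then through $\mu_n$ is the same content as the paper's appeal to \cite[Expos\'e XVII, Prop. 2.4]{sga3-2} plus stabilization of the colimit), and the vanishing $H^i_{\mr{fl}}(X,G)=0$ for $i\geq2$, which you re-derive from the smooth resolution $0\to G\to G_1\to G_2\to 0$, Lang's theorem and $\mr{cd}(k)\leq1$, whereas the paper simply cites \cite[Chap. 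III, Lem. 1.1]{mil2}; your derivation is a correct substitute. Feeding this into (\ref{eq3.4}) gives the isomorphism $H^2_{\mr{kfl}}(X,G)\cong H^1_{\mr{\acute{e}t}}(x,N)$ and the short exact sequence $0\to H^1_{\mr{fl}}(X,G)\to H^1_{\mr{kfl}}(X,G)\to H^0_{\mr{\acute{e}t}}(x,N)\to 0$, exactly as in the paper.

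The gap is the splitting of this sequence, which is precisely the point where the paper invokes \cite[App. D, Lem. D.1]{w-z1} rather than arguing directly. Your proposed section $s(\phi_0)=\phi_*[X_n]$ is the right idea, but you have not proved the statement you yourself identify as the crux: that the edge map $\alpha_{\mu_n}$ sends the class of the standard Kummer cover $X_n$ to the tautological generator of $\Gamma(X,R^1\varepsilon_{\mr{fl}*}\mu_n)\cong\Z/n\Z$, and, more than that, that $\alpha(\phi_*[X_n])=\phi_0$ under the identification $\Gamma(X,R^1\varepsilon_{\mr{fl}*}G)\cong N(k)$ — this needs the functoriality in $G$ of the explicit description of $R^1\varepsilon_{\mr{fl}*}$ from \cite[Thm. 3.14]{zha5} and an unwinding of how $[X_n]$ sits in Kato's description, neither of which is ``homological bookkeeping'' that can be waved through. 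A secondary point left unargued is the unique lifting $\mr{Hom}_R(\mu_n,G)\xrightarrow{\sim}N(k)$: for finite flat $G$ killed by $n$ this follows, e.g., from Cartier duality ($\mc{H}om(\mu_n,G)\cong\mc{H}om(G^{\vee},\Z/n\Z)$ is quasi-finite \'etale separated over the Henselian base), but as written it is only asserted. Since the conclusion ``granting this'' carries the whole first isomorphism of the Proposition, the proof is incomplete as it stands; it would become complete either by carrying out the verification of $\alpha\circ s=\mr{id}$ or by citing the splitting lemma \cite[App. D, Lem. D.1]{w-z1} as the paper does.
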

\begin{proof}
By \cite[Chap. III, Lem. 1.1]{mil2}, we have $H^s_{\mr{fl}}(X,G)=0$ for $s\geq2$. Then the exact sequence (\ref{eq3.4}) and the identification (\ref{eq3.2}) together give rise to a short exact sequence
$$0\to H^1_{\mr{fl}}(X,G)\to H^{1}_{\mr{kfl}}(X,G)\to \varinjlim_r H^0_{\mr{\acute{e}t}}(x,\mc{H}om_x(\Z/r\Z(1),G_x)) \to0$$
and an isomorphism
$$H^{2}_{\mr{kfl}}(X,G)\cong \varinjlim_rH^1_{\mr{\acute{e}t}}(x,\mc{H}om_x(\Z/r\Z(1),G_x)).$$
Since $n$ kills $G$, we get 
$$\varinjlim_rH^s_{\mr{\acute{e}t}}(x,\mc{H}om_x(\Z/r\Z(1),G_x))=H^s_{\mr{\acute{e}t}}(x,\mc{H}om_x(\Z/n\Z(1),G_x))$$
for $s=0,1$. Since $G_x/G_x^{\mr{mul}}$ is unipotent, we get 
$$\mc{H}om_x(\Z/n\Z(1),G_x)=\mc{H}om_x(\Z/n\Z(1),G_x^{\mr{mul}})=N$$
by \cite[Expos\'e XVII, Prop. 2.4]{sga3-2}. Thus we get a short exact sequence
\begin{equation}\label{eq3.5}
0\to H^1_{\mr{fl}}(X,G)\to H^1_{\mr{kfl}}(X,G)\to H^0_{\mr{\acute{e}t}}(x,N)\to0
\end{equation}
and an isomorphism
$$H^2_{\mr{kfl}}(X,G)\cong H^1_{\mr{\acute{e}t}}(x,N).$$
The short exact sequence (\ref{eq3.5}) is actually split by \cite[App. D, Lem. D.1]{w-z1}. Thus we get 
$$H^1_{\mr{kfl}}(X,G)\cong H^1_{\mr{fl}}(X,G)\oplus H^0_{\mr{\acute{e}t}}(x,N).$$
\end{proof}

\begin{rmk}
Let the situation be as in Proposition \ref{3.2}. The restriction map 
$H^1_{\mr{fl}}(X,G)\to H^1_{\mr{fl}}(\Spec K,G\times_X\Spec K)$ is injective by \cite[Chap. III, Lem. 1.1]{mil2}. The analogue for the Kummer log flat topology is also true, i.e. the restriction map $H^1_{\mr{kfl}}(X,G)\to H^1_{\mr{fl}}(\Spec K,G\times_X\Spec K)$ is injective, see \cite[Prop. 3.6]{gil1}.
\end{rmk}

\begin{prop}\label{3.3}
Assume that $k$ is a finite field, and $G$ satisfies condition (iii). Then we have 
$$H^s_{\mr{kfl}}(X,G)\cong \varinjlim_nH^{s-1}_{\mr{\acute{e}t}}(x,\mc{H}om_x(\Z/n\Z(1),G_x))$$
for $s=1,2$.
\end{prop}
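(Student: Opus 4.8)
The plan is to run the argument of Proposition \ref{3.1} essentially verbatim, the only input that needs verification being the vanishing of the flat cohomology $H^s_{\mr{fl}}(X,G)$ for $s=1,2,3$. Once that is in hand, the exact sequence (\ref{eq3.4}) and the identification (\ref{eq3.2}) do the rest.

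First I would establish the vanishing. Since $G$ is an extension of an abelian scheme by a torus over $\Spec R$, it is smooth, so $H^s_{\mr{fl}}(X,G)=H^s_{\mr{\acute{e}t}}(X,G)$; and as $R$ is Henselian local with closed point $x$, the latter equals $H^s_{\mr{\acute{e}t}}(x,G_x)$ by \cite[Tag 0DDU]{stacks-project} together with \cite[Chap. III, Rmk. 3.11 (a)]{mil1}, exactly as in the proof of Proposition \ref{3.1}. The residue field $k$ is finite, hence of cohomological dimension $1$, so $H^s_{\mr{\acute{e}t}}(x,G_x)=0$ for $s\geq2$ (here one uses that $G_x(\overline{k})$ is torsion, being an extension of the torsion group $A_x(\overline{k})$ by the torsion group $T_x(\overline{k})$). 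For $s=1$, base changing the given extension to $x$ exhibits $G_x$ as an extension of the abelian variety $A_x$ by the torus $T_x$, hence as a connected smooth algebraic group over $k$, so $H^1_{\mr{\acute{e}t}}(x,G_x)=0$ by Lang's theorem. Thus $H^s_{\mr{fl}}(X,G)=0$ for $s=1,2,3$.

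With this, the conclusion follows as in Proposition \ref{3.1}: feeding $H^1_{\mr{fl}}(X,G)=H^2_{\mr{fl}}(X,G)=H^3_{\mr{fl}}(X,G)=0$ into the exact sequence (\ref{eq3.4}) gives canonical isomorphisms $H^1_{\mr{kfl}}(X,G)\xrightarrow{\cong}H^0_{\mr{fl}}(X,R^1\varepsilon_{\mr{fl}*}G)$ and $H^2_{\mr{kfl}}(X,G)\xrightarrow{\cong}H^1_{\mr{fl}}(X,R^1\varepsilon_{\mr{fl}*}G)$, and then the identification (\ref{eq3.2}), which is valid in case (iii) as well, rewrites the right-hand sides as $\varinjlim_n H^{s-1}_{\mr{\acute{e}t}}(x,\mc{H}om_x(\Z/n\Z(1),G_x))$ for $s=1,2$. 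There is no serious obstacle here: the argument is parallel to that of Proposition \ref{3.1}, the one point requiring attention being the application of Lang's theorem, which rests on the observation that an extension of an abelian variety by a torus over a field is a connected smooth group scheme. Unlike in Proposition \ref{3.1}, I would not expect the Hom-sheaf $\mc{H}om_x(\Z/n\Z(1),G_x)$ to simplify further, since a homomorphism $\Z/n\Z(1)\to G_x$ need not land in the toric part $T_x$; this is why the statement keeps $\mc{H}om_x(\Z/n\Z(1),G_x)$ as it is.
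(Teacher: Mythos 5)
Your proof is correct and follows essentially the same route as the paper: establish $H^s_{\mr{fl}}(X,G)=H^s_{\mr{\acute{e}t}}(x,G_x)=0$ for $s\geq1$ (flat = \'etale by smoothness, restriction to the closed point by Henselianness, vanishing by cohomological dimension $1$ and Lang's theorem), then conclude from the exact sequence (\ref{eq3.4}) and the identification (\ref{eq3.2}). Your explicit remark that $G_x(\overline{k})$ is torsion (needed for the cd-$1$ vanishing) and that $\mc{H}om_x(\Z/n\Z(1),G_x)$ does not simplify further are both accurate refinements of the paper's terser argument.
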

\begin{proof}
The same argument in the beginning of the proof of Proposition \ref{3.1} shows also $$H^s_{\mr{fl}}(X,G)=H^s_{\mr{\acute{e}t}}(X,G)=H^s_{\mr{\acute{e}t}}(x,G_x)=0$$
for $s\geq1$. Then the result follows from the exact sequence (\ref{eq3.4}) and the identification (\ref{eq3.2}).
\end{proof}

\begin{rmk}
By Proposition \ref{3.1}, Proposition \ref{3.2}, and Proposition \ref{3.3}, we reduce the computation of $H^s_{\mr{kfl}}(X,G)$ for $s=1,2$ to the computation of certain \'etale cohomology groups over the finite field $k$, which can be easily computed via Galois cohomology over $k$.
\end{rmk}

\subsection{Global Dedekind domains}
Through this subsection, let $K$ be a global field. When $K$ is a number field, $X$ denotes the spectrum of the ring of integers in $K$, and when $K$ is a function field, $k$ denotes the field of constants of $K$ and $X$ denotes the unique connected smooth projective curve over $k$ having $K$ as its function field. Let $S$ be a finite set of closed points of $X$, $U:=X-S$, $j:U\hookrightarrow X$, and $i_x:x\hookrightarrow X$ for each closed point $x\in X$. We endow $X$ with the log structure $j_{*}\mc{O}^{\times}_U\cap\mc{O}_X\rightarrow \mc{O}_X$. In the case of number field, let $S_{\infty}:=S\cup\{\text{infinite places of $K$}\}$, and in the case of function field, we just let $S_{\infty}:=S$.

\begin{prop}
Let $X$ be as above. Let $G$ be a commutative group scheme over the underlying scheme of $X$ which satisfies any of the following three conditions:
\begin{enumerate}[(i)]
\item $G$ is smooth and affine over the underlying scheme of $X$, and has geometrically connected fibers;
\item $G$ is finite flat of order $n$ over the underlying scheme of $X$, and for any prime number $l$ the kernel of $G\xrightarrow{l^n}G$ is also finite flat over the underlying scheme of $X$;
\item $G$ is an extension of an abelian scheme by a torus over the underlying scheme of $X$.
\end{enumerate}
For $x\in S$, let $G_x:=G\times_Xx$. Then we have the following exact sequence
\begin{equation}\label{eq3.6}
\begin{split}
0\rightarrow &H^1_{\mr{fl}}(X,G)\rightarrow H^1_{\mr{kfl}}(X,G)\xrightarrow{\alpha} H^0_{\mr{fl}}(X,R^1\varepsilon_{\mr{fl}*}G)  \\
\rightarrow &H^2_{\mr{fl}}(X,G)\rightarrow H^2_{\mr{kfl}}(X,G)\rightarrow H^1_{\mr{fl}}(X,R^1\varepsilon_{\mr{fl}*}G)  \\
\rightarrow &H^3_{\mr{fl}}(X,G)\rightarrow H^3_{\mr{kfl}}(X,G),
\end{split}
\end{equation}
and 
$$H^i_{\mr{fl}}(X,R^1\varepsilon_{\mr{fl}*}G)\cong \bigoplus_{x\in S} H^i_{\mr{\acute{e}t}}(x,\varinjlim_n\mc{H}om_x(\Z/n\Z(1),G_x))$$
for $i=0,1$.
\end{prop}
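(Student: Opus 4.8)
The plan is to derive both assertions from the Leray spectral sequence $H^s_{\mr{fl}}(X,R^t\varepsilon_{\mr{fl}*}G)\Rightarrow H^{s+t}_{\mr{kfl}}(X,G)$, exactly as in the case of discrete valuation rings treated in Subsection~\ref{subsec3.1}. The key input is the vanishing of $R^2\varepsilon_{\mr{fl}*}G$ on strict log schemes over $X$. First I would observe that the log structure $M_X=j_{*}\mc{O}^{\times}_U\cap\mc{O}_X$ has rank at most one at every geometric point of $X$: over a closed point $x\in S$ the local ring $\mc{O}_{X,x}$ is a discrete valuation ring and the stalk of $M_X^{\mr{gp}}/\mc{O}_X^{\times}$ there is free of rank one (generated by a uniformizer), while over every point of $U$ the log structure is trivial. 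Hence Theorem~\ref{2.10}, applied with $Y=X$, shows that the restriction of $R^2\varepsilon_{\mr{fl}*}G$ to $(\mr{st}/X)$ is zero in each of the three cases (i), (ii), (iii); in particular $H^s_{\mr{fl}}(X,R^2\varepsilon_{\mr{fl}*}G)=0$ for all $s$.

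Granting this, the first assertion is immediate: since $E_2^{0,2}=H^0_{\mr{fl}}(X,R^2\varepsilon_{\mr{fl}*}G)=0$, the exact sequence of terms of low degree of the spectral sequence is precisely the eight-term exact sequence (\ref{eq3.6}), the maps out of $H^1_{\mr{kfl}}(X,G)$ and $H^2_{\mr{kfl}}(X,G)$ being edge maps and the maps $H^0_{\mr{fl}}(X,R^1\varepsilon_{\mr{fl}*}G)\to H^2_{\mr{fl}}(X,G)$ and $H^1_{\mr{fl}}(X,R^1\varepsilon_{\mr{fl}*}G)\to H^3_{\mr{fl}}(X,G)$ being the $d_2$-differentials. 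The exactness of (\ref{eq3.6}) through degree three (including exactness at $H^3_{\mr{fl}}(X,G)$) uses only the vanishing of $E_2^{0,2}$ — which forces $E_3^{0,2}=E_\infty^{0,2}=0$, so $E_\infty^{3,0}=E_2^{3,0}/\mathrm{im}(d_2\colon E_2^{1,1}\to E_2^{3,0})$ — so no control of $R^t\varepsilon_{\mr{fl}*}G$ for $t\geq 3$ is required.

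For the second assertion I would repeat the computation (\ref{eq3.2}). By \cite[Thm.~3.14]{zha5} one has $R^1\varepsilon_{\mr{fl}*}G=\varinjlim_n\mc{H}om_X(\Z/n\Z(1),G)\otimes_{\Z}(\Gml/\Gm)_{X_{\mr{fl}}}$ in cases (i) and (iii); the same formula holds in case (ii) by applying $\mc{H}om_X(\Z/n\Z(1),-)$ (left exact) and then $-\otimes_{\Z}(\Gml/\Gm)_{X_{\mr{fl}}}$ (exact, since $(\Gml/\Gm)_{X_{\mr{fl}}}$ takes values in free abelian groups) to the canonical smooth resolution $0\to G\to G_1\to G_2\to0$ of $G$ by smooth affine commutative group schemes with geometrically connected fibers. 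Restricting to $(\mr{st}/X)$ one has $(\Gml/\Gm)_{X_{\mr{fl}}}\cong M_X^{\mr{gp}}/\mc{O}_X^{\times}\cong\bigoplus_{x\in S}(i_x)_{*}\Z$; since $(i_x)_{*}\Z$ is concentrated at $x$ and forming $\mc{H}om$ commutes with base change to $x$ (representability, \cite[Lem.~A.1]{zha5}), this gives $R^1\varepsilon_{\mr{fl}*}G|_{(\mr{st}/X)}\cong\bigoplus_{x\in S}(i_x)_{*}\varinjlim_n\mc{H}om_x(\Z/n\Z(1),G_x)$. Taking $H^i_{\mr{fl}}(X,-)$, pulling the finite direct sum out, using that $(i_x)_{*}$ is exact and preserves cohomology because $i_x$ is a closed immersion, and finally that $H^i_{\mr{fl}}(x,-)=H^i_{\mr{\acute{e}t}}(x,-)$ on the \'etale sheaf $\varinjlim_n\mc{H}om_x(\Z/n\Z(1),G_x)$ by \cite[Tag~0DDU]{stacks-project}, yields the claimed isomorphism for $i=0,1$.

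The whole thing is essentially bookkeeping once Theorem~\ref{2.10} and \cite[Thm.~3.14]{zha5} are in hand. The only point that deserves a little attention is case (ii), where $G$ is not smooth, so that both the formula for $R^1\varepsilon_{\mr{fl}*}G$ and the identification of its restriction to $(\mr{st}/X)$ must be routed through the smooth resolution; but the left-exactness of $\mc{H}om$ together with the exactness of $-\otimes_{\Z}(\Gml/\Gm)_{X_{\mr{fl}}}$ make this harmless, and I do not expect a genuine obstacle.
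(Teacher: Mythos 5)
Your proposal is correct and follows essentially the same route as the paper: Theorem \ref{2.10} kills $R^2\varepsilon_{\mr{fl}*}G$ on $(\mr{st}/X)$, the Leray spectral sequence then yields the eight-term sequence (\ref{eq3.6}), and the identification of $H^i_{\mr{fl}}(X,R^1\varepsilon_{\mr{fl}*}G)$ comes from the formula of \cite[Thm.~3.14]{zha5} together with $(\Gml/\Gm)_{X_{\mr{fl}}}\cong\bigoplus_{x\in S}i_{x,*}\Z$ and \cite[Lem.~A.1]{zha5}. Your extra care in case (ii), routing the formula for $R^1\varepsilon_{\mr{fl}*}G$ through the smooth resolution, is a sound elaboration of a step the paper leaves implicit.
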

\begin{proof}
On $(\mr{st}/X)$, we have $R^2\varepsilon_{\mr{fl}*}G=0$
by Theorem \ref{2.10}. Thus the Leray spectral sequence
$$H^s_{\mr{fl}}(X,R^t\varepsilon_{\mr{fl}*}G)\Rightarrow H^{s+t}_{\mr{kfl}}(X,G)$$ gives rise to the long exact sequence (\ref{eq3.6}). 

We have 
$$R^1\varepsilon_{\mr{fl}*}G=\bigoplus_{x\in S}i_{x,*}\varinjlim_n\mc{H}om_x(\Z/n\Z(1),G_x)$$
by \cite[Thm. 3.14]{zha5}. 
It follows that 
\begin{align*}
H^i_{\mr{fl}}(X,R^1\varepsilon_{\mr{fl}*}G)=&H^i_{\mr{fl}}(X,\bigoplus_{x\in S}i_{x,*}\varinjlim_n\mc{H}om_x(\Z/n\Z(1),G_x))   \\
=&\bigoplus_{x\in S} H^i_{\mr{fl}}(x,\varinjlim_n\mc{H}om_x(\Z/n\Z(1),G_x)) \\
\cong &\bigoplus_{x\in S} H^i_{\mr{\acute{e}t}}(x,\varinjlim_n\mc{H}om_x(\Z/n\Z(1),G_x)),
\end{align*}
where the last isomorphism follows from \cite[Lem. A.1]{zha5}.
\end{proof}

In the case that $G=\Gm$, the map $\alpha$ of the exact sequence (\ref{eq3.6}) has been shown to be surjective in \cite[\S 5.2]{zha5}. The map $\alpha$ of the exact sequence (\ref{eq3.4}) is also surjective if the residue field $k$ of the base ring $R$ is finite by Proposition \ref{3.1}, Proposition \ref{3.2}, and Proposition \ref{3.3}. It is not clear to the author at this moment if the map $\alpha$ of (\ref{eq3.6}) is surjective beyond the case $G=\Gm$. The method of \cite[\S 5.2]{zha5} does not work even for general tori.

\section*{Acknowledgement}
The author thanks Professor Ulrich G\"ortz for very helpful discussions. 
\bibliographystyle{alpha}
\bibliography{bib}

\begin{thebibliography}{{Sta}21}

\bibitem[Art62]{art3}
Michael Artin.
\newblock {\em Grothendieck topologies: notes on a seminar}.
\newblock Harvard University, Dept. of Mathematics, 1962.

\bibitem[B{\'e}g81]{beg1}
Lucile B{\'e}gueri.
\newblock Dualit\'e sur un corps local \`a corps r\'esiduel alg\'ebriquement
  clos.
\newblock {\em M\'em. Soc. Math. France (N.S.)}, (4):121, 1980/81.

\bibitem[BLR90]{b-l-r1}
Siegfried Bosch, Werner L{\"u}tkebohmert, and Michel Raynaud.
\newblock {\em N\'eron models}, volume~21 of {\em Ergebnisse der Mathematik und
  ihrer Grenzgebiete (3) [Results in Mathematics and Related Areas (3)]}.
\newblock Springer-Verlag, Berlin, 1990.

\bibitem[Bri17]{bri2}
Michel Brion.
\newblock Some structure theorems for algebraic groups.
\newblock In {\em Algebraic groups: structure and actions}, volume~94 of {\em
  Proc. Sympos. Pure Math.}, pages 53--126. Amer. Math. Soc., Providence, RI,
  2017.

\bibitem[CGP15]{c-g-p1}
Brian Conrad, Ofer Gabber, and Gopal Prasad.
\newblock {\em Pseudo-reductive groups}, volume~26 of {\em New Mathematical
  Monographs}.
\newblock Cambridge University Press, Cambridge, second edition, 2015.

\bibitem[DG70]{sga3-2}
Michel Demazure and Alexander Grothendieck, editors.
\newblock {\em Sch\'emas en groupes. {II}: {G}roupes de type multiplicatif, et
  structure des sch\'emas en groupes g\'en\'eraux}.
\newblock S\'eminaire de G\'eom\'etrie Alg\'ebrique du Bois Marie 1962/64 (SGA
  3). Dirig\'e par M. Demazure et A. Grothendieck. Lecture Notes in
  Mathematics, Vol. 152. Springer-Verlag, Berlin-New York, 1970.

\bibitem[Gil12]{gil1}
Jean Gillibert.
\newblock Cohomologie log plate, actions mod\'{e}r\'{e}es et structures
  galoisiennes.
\newblock {\em J. Reine Angew. Math.}, 666:1--33, 2012.

\bibitem[GP11]{sga3-1}
Philippe Gille and Patrick Polo, editors.
\newblock {\em Sch\'emas en groupes ({SGA} 3). {T}ome {I}. {P}ropri\'et\'es
  g\'en\'erales des sch\'emas en groupes}.
\newblock Documents Math\'ematiques (Paris) [Mathematical Documents (Paris)],
  7. Soci\'et\'e Math\'ematique de France, Paris, 2011.
\newblock S{\'e}minaire de G{\'e}om{\'e}trie Alg{\'e}brique du Bois Marie
  1962--64. [Algebraic Geometry Seminar of Bois Marie 1962--64], A seminar
  directed by M. Demazure and A. Grothendieck with the collaboration of M.
  Artin, J.-E. Bertin, P. Gabriel, M. Raynaud and J-P. Serre, Revised and
  annotated edition of the 1970 French original.

\bibitem[Gro72]{sga7-1}
Alexander Grothendieck, editor.
\newblock {\em Groupes de monodromie en g\'eom\'etrie alg\'ebrique. {I}}.
\newblock Lecture Notes in Mathematics, Vol. 288. Springer-Verlag, Berlin-New
  York, 1972.
\newblock S{\'e}minaire de G{\'e}om{\'e}trie Alg{\'e}brique du Bois-Marie
  1967--1969 (SGA 7 I), Dirig{\'e} par A. Grothendieck. Avec la collaboration
  de Michel, Raynaud et D. S. Rim.

\bibitem[Kat81]{katz1}
N.~Katz.
\newblock Serre-{T}ate local moduli.
\newblock In {\em Algebraic surfaces ({O}rsay, 1976--78)}, volume 868 of {\em
  Lecture Notes in Math.}, pages 138--202. Springer, Berlin-New York, 1981.

\bibitem[Kat19]{kat2}
Kazuya Kato.
\newblock Logarithmic structures of {F}ontaine-{I}llusie. {II}.
\newblock {\em arXiv preprint arXiv:1905.10678}, 2019.

\bibitem[KN99]{k-n1}
Kazuya Kato and Chikara Nakayama.
\newblock Log {B}etti cohomology, log \'{e}tale cohomology, and log de {R}ham
  cohomology of log schemes over {${\bf C}$}.
\newblock {\em Kodai Math. J.}, 22(2):161--186, 1999.

\bibitem[Mil80]{mil1}
James~S. Milne.
\newblock {\em \'Etale cohomology}, volume~33 of {\em Princeton Mathematical
  Series}.
\newblock Princeton University Press, Princeton, N.J., 1980.

\bibitem[Mil06]{mil2}
J.~S. Milne.
\newblock {\em Arithmetic duality theorems}.
\newblock BookSurge, LLC, Charleston, SC, second edition, 2006.

\bibitem[Niz08]{niz1}
Wies{\l}awa Nizio{\l}.
\newblock {$K$}-theory of log-schemes. {I}.
\newblock {\em Doc. Math.}, 13:505--551, 2008.

\bibitem[{Sta}21]{stacks-project}
The {Stacks project authors}.
\newblock The stacks project.
\newblock \url{https://stacks.math.columbia.edu}, 2021.

\bibitem[Swa99]{swa1}
Richard~G. Swan.
\newblock Cup products in sheaf cohomology, pure injectives, and a substitute
  for projective resolutions.
\newblock {\em J. Pure Appl. Algebra}, 144(2):169--211, 1999.

\bibitem[WZ20]{w-z1}
Matti W\"{u}rthen and Heer Zhao.
\newblock Log p-divisible groups associated to log 1-motives, 2020.

\bibitem[Zha21]{zha5}
Heer Zhao.
\newblock Comparison of {K}ummer logarithmic topologies with classical
  topologies.
\newblock {\em Journal of the Institute of Mathematics of Jussieu}, pages
  1--31, 2021.

\end{thebibliography}

\end{document}